\numberwithin{equation}{section}
\newtheorem{theorem}{Theorem}[section]
\newtheorem{theo}{Theorem}[section]
\newtheorem{lemma}[theorem]{Lemma}
\newtheorem{lem}[theorem]{Lemma}
\newtheorem{proposition}[theorem]{Proposition}
\newtheorem{corollary}[theorem]{Corollary}
\newtheorem{remark}[theorem]{Remark}
\newtheorem{defi}[theorem]{Definition}
\newcommand{\RR}{\mathbb{R}}
\newcommand{\cO}{\mathcal{O}}
\newcommand{\cE}{\mathcal{E}}
\newcommand{\ZZ}{{\mathbb Z}}
\newcommand{\FF}{{\widehat F}}
\def\hv{\hat{v}}
\def\hV{\widehat{V}}
\def\hPsi{\widehat{\Psi}}
\def\Fg{\widehat{g}}
\def\Frho{\widehat{\rho}}
\def\Fphi{\widehat{\phi}}
\def\FE{\widehat{E}}
\def\FG{\widehat{G}}
\def\FB{\widehat{B}}
\def\Trho{\widetilde{\rho}}
\def\Tphi{\widetilde{\phi}}
\begin{document}

\title{A new framework for particle-wave interaction} 

\author{Toan T. Nguyen\footnotemark[1]
}

\maketitle

\footnotetext[1]{Penn State University, Department of Mathematics, State College, PA 16802. Email: nguyen@math.psu.edu. The research is supported in part by the NSF under grants DMS-2054726 and DMS-2349981. The author would like to acknowledge the hospitality of the Institut des Hautes \'Etudes Scientifiques for a visit in Spring 2024 during which part of this research was
carried out.
}

\begin{abstract}
In plasma physics, collisionless charged particles are transported following the dynamics of a meanfield Vlasov equation with a self-consistent electric field generated by the charge density. Due to the long range interaction between particles, the generating electric field oscillates and disperses like a Klein-Gordon dispersive wave, known in the physical literature as plasma oscillations or Langmuir's oscillatory waves. The oscillatory electric field then in turn drives particles. Despite its great physical importance, the question of whether such a nonlinear particle-wave interaction would remain regular globally and be damped in the large time has been an outstanding open problem. In this paper, we propose a new framework to resolve this exact nonlinear interaction. Specifically, we employ the framework to establish the large time behavior and scattering of solutions to the nonlinear Vlasov-Klein-Gordon system in the small initial data regime. The novelty of this work is to provide a detailed physical space description of particles moving in an oscillatory field and to resolve oscillations for the electric field generated by the collective interacting particles. This appears to be the first such a result analyzing oscillations in the physical phase space $\RR^3_x\times \RR_v^3$. 

\end{abstract}


\tableofcontents



\section{Introduction}


One of the most central questions in plasma physics is whether a plasma, namely a collection of charged particles, in a non-equilibrium state will transition to turbulence or relax to neutrality in the large time. The question poses a great mathematical challenge due to the extremely rich underlying physics including phase mixing, Landau damping, oscillations, and trapped geodesics, among others. While {\em phase mixing} is purely a free transport phenomenon, {\em plasma oscillations} arise self-consistently due to the long-range pair interaction between the charged particles that generates oscillatory electric fields, also known as Langmuir's oscillatory waves in the physical literature \cite{Trivelpiece}. These Langmuir waves oscillate and disperse like a Klein-Gordon dispersive wave as recently confirmed in the mathematical literature \cite{HKNR3, BMM-lin, Toan, Ionescu1, HKNR4} for a linearized system near fixed background equilibria. 

L. Landau in his original paper \cite{Landau} addresses the very question of whether such an oscillation is damped (i.e. energy exchange from waves to particles, or potential to kinetic energy), ever since known as {\em Landau damping}, for the linearized system. Landau's law of decay can be explicitly computed, as done in \cite{Landau} near Gaussian states, and is sensitive to the decaying rate of the background electrons: the faster the background vanishes at its maximal velocity, the weaker Landau damping is. Furthermore, as was recently discovered in \cite{Toan, HKNR4}, there is a nontrivial {\em survival threshold} of wave numbers that completely characterizes the decay mechanism for the linearized electric field: phase mixing above the threshold, Landau damping at the threshold, and the survival of pure Klein-Gordon oscillations below the threshold; see also \cite{ChanjinToan} for a quantum mechanical counterpart. This dynamical picture remains notoriously open for the full nonlinear problem. In this paper, we resolve the nonlinear particle-wave interaction issue, namely resolving (non) {\em Landau damping below survival threshold.} 

Specifically, in this paper, we consider the following relativistic Vlasov-Klein-Gordon system
\begin{equation}
\label{VKG1}
\partial_t f + \hv\cdot \nabla_x f + E\cdot \nabla_v f = 0 
\end{equation} 
\begin{equation}\label{VKG2}
E = -\nabla_x \phi, \qquad (\Box_{t,x}+1) \phi = -\rho[f] ~~~ 
\end{equation}
with $\Box_{t,x} = \partial_t^2 - \Delta_x$, modeling the dynamics of an electron moving in a self-consistent oscillatory field, which resembles the exact particle-wave interaction encountered below the survival threshold. The particles are transported with relativistic velocity $ \hv = v/\langle v\rangle$, with $\langle v\rangle=\sqrt{1+|v|^2}$, while the electric field $E(t,x)$ is self-consistent and generated by the charge density 
$$\rho[f](t,x) = \int_{\RR^3} f(t,x,v)\; dv$$ 
through the Klein-Gordon equation. The system \eqref{VKG1}-\eqref{VKG2} is nonlinear through the quadratic interaction $E\cdot \nabla_v f$, since $E$ depends linearly on $f$ through its density $\rho[f](t,x)$. The system will be solved for initial data of the form 
\begin{equation}\label{VKG3}f(0,x,v) = f_0(x,v), \qquad \phi(0,x) = \phi_0(x), \qquad \partial_t\phi(0,x) = \phi_1(x). \end{equation}

The system \eqref{VKG1}-\eqref{VKG3} may also be seen as a special case of the relativistic Vlasov-Maxwell system where the electromagnetic fields are self-consistently generated through the classical Maxwell equations, and has been studied in the literature \cite{Kunzinger1, Kunzinger2}. However, as is the case of Vlasov-Maxwell systems \cite{GlasseyStrauss, Klainerman, BGP}, global smooth solutions to the Vlasov-Klein-Gordon system for general data are not known to exist. Most intriguingly, the question of global existence and scattering theory is widely open for the Vlasov-Klein-Gordon system even for small initial data. Namely, there are no known global in time classical solutions to \eqref{VKG1}-\eqref{VKG3}, but global weak solutions or continuation criteria for classical solutions \cite{Kunzinger1, Kunzinger2}. This is in great contrast to the case of Vlasov-Maxwell systems where the small data regime is well-understood  \cite{GlasseyStrauss2, Big1, Big2, Wang}. See also \cite{Calogero1, Calogero2} for a related Vlasov-{N}ordstr\"{o}m system describing the self-gravitating collisionless matter.  

In this paper, we establish the global well-posedness and scattering theory in the small data regime, therefore resolving the particle-wave interaction at the nonlinear level. The novelty of this work is to provide a new framework to study oscillations in the physical phase space and nonlinear particle-wave interaction via a detailed description of particles moving in an oscillatory field and of plasma oscillations generated by the collective interacting particles. The framework of dealing with the interaction between particles and oscillatory waves should also be found useful in several other contexts such as Vlasov-Maxwell, Vlasov-{N}ordstr\"{o}m, or Einstein-Vlasov systems. We also mention \cite{BD, IPWW1, GlasseyStrauss2, Big1, BMM-cpam, HKNR2, TrinhLD, IPWW} for a few related works
on the large time behavior and scattering of solutions to collisionless Vlasov models in the whole space $\RR^3_x\times \RR_v^3$.  

\subsection{Main result}

Our main result reads as follows. 

\begin{theo} \label{maintheorem} Fix $\alpha_0\ge 8$, and let $f_0(x,v), \phi_0(x), \phi_1(x)$ be initial data of the relativistic Vlasov-Klein-Gordon system \eqref{VKG1}-\eqref{VKG3}. Suppose that $f_0(x,v)$ is compactly supported in $v$, and in addition,  
\begin{equation}\label{data-assumptions}
\| \phi_0\|_{H^{\alpha_0+2}_x} + \| \phi_1\|_{H^{\alpha_0+1}_x}
+ \| f_0\|_{H^{\alpha_0+1}_{x,v}} \le \epsilon_0.
\end{equation}
Then, for sufficiently small $\epsilon_0$, the solution $(f(t,x,v), E(t,x))$ to the relativistic Vlasov-Klein-Gordon system \eqref{VKG1}-\eqref{VKG3} exists globally in time, and the nonlinear electric field $E(t,x)$ remains oscillatory and is of the form 
\begin{equation}\label{main-repE}
\begin{aligned}
E &= \sum_\pm E^{osc}_\pm(t,x) + E^r(t,x),
\end{aligned}\end{equation}
where $E^{osc}_\pm(t,x)$ behaves as a pure Klein-Gordon wave, and $E^r(t,x)$ denotes a remainder term. In particular, there hold
\begin{equation}\label{maintheo-bdsE}
\begin{aligned}
\| E^{osc}_\pm(t)\|_{L^p_x} &\lesssim \epsilon_0 \langle t\rangle^{-3(1/2-1/p)} , \qquad p\in [2,\infty],
\\
\| E^r(t)\|_{L^p_x} &\lesssim \epsilon_0 \langle t\rangle^{-3(1-1/p)} , \qquad p\in [1,\infty],
\end{aligned}
\end{equation}
for all $t\ge 0$. 
In addition, particles scatter in the large time, namely for any initial state $(x,v)$, there exists a final velocity $V_\infty(x,v)$ in $C^1(\RR^3\times \RR^3)$ so that the particle trajectories\footnote{Throughout this paper, we in fact work with the backward characteristic, see Section \ref{sec-char}.} $(X(t;x,v), V(t;x,v))$ satisfy 
\begin{equation}\label{scatterXV}
\| V(t;x,v) - V_\infty(x,v)\|_{L^\infty_{x,v}}  \lesssim \epsilon_0 \langle t\rangle^{-3/2}, \qquad \| X(t;x,v) - x - tV_\infty(x,v)\|_{L^\infty_{x,v}} \lesssim \epsilon \langle t\rangle^{-1/2}
.\end{equation}
In particular, there exists an $f_\infty(x,v)$ in $C^1(\RR^3\times \RR^3)$ so that 
\begin{equation}\label{scatterf}\|f(t,x+tV_\infty(x,v),V_\infty(x,v)) - f_\infty(x, v)\|_{L^\infty_{x,v}} \lesssim \epsilon_0\langle t\rangle^{-1/2}, \end{equation}
for all $t\ge 0$. 
\end{theo}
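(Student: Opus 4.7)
The plan is a continuity/bootstrap argument that simultaneously propagates (i) the oscillatory $L^p$-decay estimates \eqref{maintheo-bdsE} for a decomposition $E = \sum_\pm E^{osc}_\pm + E^r$, (ii) the particle-scattering estimates \eqref{scatterXV}, and (iii) a slowly growing top-order Sobolev energy controlling $\|f(t)\|_{H^{\alpha_0}_{x,v}} + \|(\phi,\partial_t\phi)(t)\|_{H^{\alpha_0+2}\times H^{\alpha_0+1}}$. Under \eqref{data-assumptions} and smallness of $\epsilon_0$, the goal is to improve every bootstrap constant by a factor of two, which by continuity yields global existence together with all the stated quantitative estimates.

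\textbf{Characteristics and particle scattering.} The bootstrap gives $\|E(t)\|_{L^\infty_x}\lesssim \epsilon_0\langle t\rangle^{-3/2}$, which is integrable in time. For the backward characteristics $\dot X = \hat V$, $\dot V = E(t,X)$, the velocity $V(t;x,v)$ is therefore Cauchy uniformly in $(x,v)$, defining $V_\infty(x,v):=\lim_{t\to\infty}V(t;x,v)$ with tail bound $\|V(t)-V_\infty\|_{L^\infty}\lesssim \int_t^\infty \epsilon_0\langle s\rangle^{-3/2}ds\lesssim \epsilon_0\langle t\rangle^{-1/2}$. The position estimate in \eqref{scatterXV} then follows by integrating $\hat V(s)-\hat V_\infty$, and the $C^1$-regularity of $V_\infty$ by propagating the linearized characteristic ODE using $\|\nabla_x E(t)\|_{L^\infty_x}$ recovered from the high-regularity bootstrap by Sobolev embedding (this is where the choice $\alpha_0\ge 8$ enters). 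Conservation $f(t,X(t),V(t))=f_0(x,v)$ along characteristics then yields \eqref{scatterf}.

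\textbf{Decomposition of $E$ and closing the field bounds.} Duhamel's formula reads $\phi(t) = \cos(t\langle\nabla\rangle)\phi_0 + \tfrac{\sin(t\langle\nabla\rangle)}{\langle\nabla\rangle}\phi_1 - \int_0^t \tfrac{\sin((t-s)\langle\nabla\rangle)}{\langle\nabla\rangle}\rho[f](s)\,ds$. The homogeneous piece splits into half-waves $e^{\pm it\langle\nabla\rangle}$ and contributes directly to $E^{osc}_\pm$, obeying the Klein-Gordon dispersive bound in \eqref{maintheo-bdsE} by classical stationary phase; the $L^2$ piece comes from energy conservation and the $L^p$ range is then obtained by interpolation. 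For the nonlinear Duhamel term the plan is to exploit the characteristic scattering to change variables $v\mapsto V_\infty(x,v)$ inside $\rho[f](s,\cdot)=\int f(s,\cdot,v)\,dv$, rewriting it as the free-streaming density of the asymptotic profile $f_\infty$ plus a lower-order correction; this free-streaming part decays as $\|\rho(s)\|_{L^\infty_x}\lesssim \epsilon_0\langle s\rangle^{-3}$ via the change of variable $y=s\hat v$. The Duhamel integral then splits into a resonant/stationary-phase contribution that self-consistently renormalizes $E^{osc}_\pm$ at the Klein-Gordon rate, together with a non-resonant remainder that is shown to satisfy the stronger $\langle t\rangle^{-3(1-1/p)}$ decay claimed of $E^r$.

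\textbf{Main obstacle.} The decisive difficulty is resolving the genuinely nonlinear interaction of oscillations: the source $\rho[f](s)$ is not a clean dispersing density, because $f$ is transported along characteristics that are themselves driven by the oscillatory $E$, so the Duhamel integrand couples two oscillatory structures — the Klein-Gordon kernel and the one inherited through the particle dynamics — whose space-time resonances must be identified and handled \emph{entirely in physical space} rather than on the Fourier side. The new framework must therefore supply an ansatz for $E^{osc}_\pm$ flexible enough that the resonant Duhamel contributions can be absorbed self-consistently, while the non-resonant part really does gain the extra $\langle t\rangle^{-3/2}$ needed to reach the $E^r$-rate; the relativistic velocities $\hat v=v/\langle v\rangle$, being bounded and yielding a non-degenerate Jacobian for $y=s\hat v$, appear indispensable to the physical-space stationary-phase arguments that make this improvement possible.
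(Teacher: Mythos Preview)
Your outline correctly identifies the bootstrap architecture, but two of the three pillars would not close as written.

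\textbf{Characteristics.} Your velocity argument gives only $\|V(t)-V_\infty\|_{L^\infty}\lesssim\int_t^\infty\epsilon_0\langle s\rangle^{-3/2}\,ds\lesssim\epsilon_0\langle t\rangle^{-1/2}$, one full power short of the claimed $\langle t\rangle^{-3/2}$ in \eqref{scatterXV}. Worse, feeding $\langle s\rangle^{-1/2}$ into the position integral $\int_0^t|\hat V(s)-\hat V_\infty|\,ds$ produces growth like $\langle t\rangle^{1/2}$, so you would not even get boundedness of $X(t)-x-t\hat V_\infty$, let alone the decay $\langle t\rangle^{-1/2}$. The missing idea is that the time integral of the oscillatory field \emph{along the characteristic} gains a full power: writing $\widehat{E^{osc}_\pm}(t,k)=e^{\lambda_\pm(k)t}\widehat B_\pm(t,k)$ and integrating by parts in $\tau$ against the phase $e^{\lambda_\pm(k)\tau+ik\cdot X_{\tau,t}}$ replaces the time integral by boundary terms of size $\|E^{osc}\|_{L^\infty}\lesssim\langle s\rangle^{-3/2}$ plus a remainder involving $\partial_\tau\widehat B_\pm\sim\widehat F$, which decays like $\langle\tau\rangle^{-3}$. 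This is the mechanism of Propositions~\ref{prop-charV}--\ref{prop-charPsi} and is the core novelty; without it neither rate in \eqref{scatterXV} is attainable.

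\textbf{The Duhamel term.} The same gap reappears for the field. Knowing $\|\rho(s)\|_{L^\infty_x}\lesssim\langle s\rangle^{-3}$ is not enough: the dispersive estimate for $G^{osc}_\pm(t-s)\star_x\nabla_x$ costs derivatives and needs an $L^1$-type norm of $\rho(s)$, which is merely bounded, so $\int_0^t\langle t-s\rangle^{-3/2}\,ds$ does not decay. Your ``resonant/non-resonant'' splitting is the right instinct but the actual mechanism is again integration by parts in $\tau$ inside the spacetime convolution (Proposition~\ref{prop-convGoscS0}), which converts $G^{osc}_\pm\star_{t,x}\nabla_x\rho$ into (i) a pure Klein-Gordon wave with data $S_{\pm,0}$, (ii) a pure-transport term $a_\pm(i\partial_x)\nabla_xS_{\pm,1}$ that feeds $E^r$, and (iii) a genuinely quadratic remainder $G^{osc}_\pm\star_{t,x}\nabla_x(E\cdot S_{2,n})$ that closes by Lemma~\ref{lem-decayosc}. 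Finally, the paper explicitly avoids bootstrapping $\|f(t)\|_{H^{\alpha_0}_{x,v}}$: the top-order commutator $E_{\mathrm{high}}\cdot\nabla_v f_{\mathrm{low}}$ is $\mathcal O(1)$ and would force linear-in-$t$ growth; instead one bootstraps only the density $S$ and the characteristics (Propositions~\ref{prop-HDchar}--\ref{prop-HDBchar}), with a cascading loss $\langle t\rangle^{\delta_\alpha}$ across derivative levels.
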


Let us comment on the main results. First, the fact that the electric field remains oscillatory may be seen from the Klein-Gordon equation $(\Box_{t,x}+1)E = \nabla_x \rho$, which may formally be written in the form 
\begin{equation}\label{introE} E(t,x) = \sum_\pm G^{osc}_\pm \star_{t,x} \nabla_x \rho(t,x) ,\end{equation}
plus initial data contributions, where $G^{osc}_\pm(t,x)$ denotes the Klein-Gordon's Green function (namely, the Fourier inverse of the spatial symbol $\langle k\rangle^{-1}e^{\pm i \langle k\rangle t}$). Due to the transport dynamics, we expect that $\rho(t,x)$ decays at an order of $\langle t\rangle^{-3}$, however unlike phase mixing (e.g., \cite{BD,BMM-cpam,HKNR2, TrinhLD, Big1}), its spatial derivatives may not gain any extra decay due to the presence of oscillations, which is one of the main issues in dealing with Klein-Gordon's type dispersion. The decay of $\rho(t,x)$ is therefore insufficient to derive that of $E(t,x)$ through the spacetime convolution \eqref{introE}. The fact that the decay is insufficient can also be seen directly from the nonlinear interaction $E \cdot \nabla_v f$, which is formally of order $t^{-1/2}$, since $\partial_v = \mathcal{O}(t)$ due to the free transport, which is far from being integrable in time, not to mention the apparent loss of derivatives in $v$. 

One of the novelties in the present approach, similar to \cite{HKNR2}, is to work in the Lagrangian coordinates, namely the particle trajectories, for which the apparent loss of derivatives in $v$ is avoided, following the transport characteristic of \eqref{VKG1}:
\begin{equation}\label{ode} \dot x = \hv, \qquad \dot v = E(t,x). \end{equation}
However, the lack of decay remains a serious issue in locating position of the particles, as is being the double integration in time of the electric field $E(t) = \mathcal{O}(t^{-3/2})$. One of the key observations in this work to overcome such a lack of decay is that time integration of the oscillatory field along the particle trajectories is better than expected. Indeed, suppose that the electric field is of the form $E_\pm^{osc}(t,x) = e^{ik\cdot x + \lambda_\pm(k) t}$, with $\lambda_\pm(k) = \pm i\langle k\rangle$ (namely, a Klein-Gordon wave at frequency $k$). Setting $\omega^v_\pm(k) = \lambda_\pm(k) + ik\cdot \hv$,   
we compute 
\begin{equation}\label{keyint}
 \begin{aligned}
 \int_0^t E_\pm^{osc}(\tau, x+\hv \tau)\; d\tau 
 &=  \int_0^t e^{ik\cdot x + \omega^v_\pm(k)\tau}\; d\tau 
= \frac{1}{\omega^v_\pm(k)} (e^{ik\cdot x + \omega^v_\pm(k)t} - e^{ik\cdot x})
\\& = \frac{1}{\omega^v_\pm(k)} \Big(E^{osc}_\pm(t,x+\hv t) - E^{osc}_\pm(0,x)\Big).
 \end{aligned}
\end{equation}
Effectively, up to a shift of $x \mapsto x-\hv t$, this calculation shows that particle velocities are a superposition of a purely oscillatory component and a pure transport part. This very decomposition turns out to hold for the genuine nonlinear particle trajectories where the electric field is nonlinear and of the form \eqref{introE}, see Proposition \ref{prop-charV}. A similar decomposition is also carried out for particle positions, see \eqref{prop-charPsi}. The precise description of particle positions and velocities in the physical space is one of the novelties of this work.  

Next, to overcome the lack of decay in the spacetime convolution \eqref{introE} and to propagate the Klein-Gordon dispersion nonlinearly,  we again perform the time integration at the level of nonlinear interaction, remarkably leading to 
\begin{equation}\label{keyint2}
\begin{aligned}
G^{osc}_\pm \star_{t,x} \nabla_x \rho(t,x)  &= \frac{1}{\omega^v_\pm(i\partial_x)}G^{osc}_\pm(t) \star_{x} \nabla_x \rho(0,x) - \frac{1}{\omega^v_\pm(i\partial_x)}G^{osc}_\pm(0) \star_{x} \nabla_x \rho(t,x)
\\&\quad + \frac{1}{\omega^v_\pm(i\partial_x)} G^{osc}_\pm \star_{t,x} \nabla_x [\rho E](t,x) .
\end{aligned}
\end{equation}
See Proposition \ref{prop-convGoscS0} for the precise details. The presentation \eqref{keyint2} reveals the deep structure hidden in the nonlinear interaction. Namely, the nonlinear electric field is again a superposition of a pure oscillation (i.e. Klein-Gordon dispersion) and a pure transport (i.e. phase mixing), plus a higher-order nonlinearity. This decoupling of oscillations from phase mixing is the key to the nonlinear iteration.  
  
Finally, it is well-known that the spacetime convolution \eqref{introE} experiences loss of derivatives (e.g., \cite{RS, HKNR4}), see Lemma \ref{lem-decayoscS0}. We may attempt to resolve the issue, following the ``standard procedure'', by deriving decay for low norms and propagating high norms with some possible growth in time. However, this does not work for Vlasov equations, since at the top order of derivatives, the nonlinear interaction $E_\mathrm{low} \cdot \nabla_v f_\mathrm{high} = \mathcal{O}(t^{-1/2})$ and $E_\mathrm{high} \cdot \nabla_v f_\mathrm{low} = \mathcal{O}(1)$, leading to a growth in time of orders $\langle t\rangle^{1/2}$ and $\langle t\rangle$, respectively (noting $f$ does not decay and $E$ has no oscillation in high norms). As should be clear by now, the novelty of this work is to completely avoid dealing with the Vlasov equation but to directly work with the densities (and hence accurately tracing the genuine nonlinear characteristic), see Section \ref{sec-sourceest}. This enables us to propagate phase mixing up to the top derivatives, remarkably allowing a growth in time not only at the top order, but also all but one derivatives of the characteristic (see Proposition \ref{prop-HDchar}). The nonlinear iterative scheme with a cascading decay in norms is devised in Section \ref{sec-bootstrap}. The cascade of decay does not come from the simple interpolation between low and high norms, but from the phase mixing estimates for the transport (i.e. the contracting in volume due to shearing), see \eqref{bootstrap-decaydaS} and Proposition \ref{prop-bdS}. 

 The paper is outlined as follows. Section \ref{sec-nonlinearframework} is devoted to highlighting the new Lagrangian framework and the nonlinear iterative scheme to keep track of oscillations. Section \ref{sec-Char} provides the precise description of the nonlinear characteristic, where oscillations are detailed at the particle level. Section \ref{sec-sourceest} treats the source density, namely the density generated by the nonlinear characteristics, leaving Section \ref{sec-decayosc} to exploit the crucial hidden structure of nonlinear interaction between transported particles and oscillatory waves.  


\section{Nonlinear framework}\label{sec-nonlinearframework}



\subsection{Characteristics}\label{sec-char}


Following \cite{HKNR2}, we shall solve the nonlinear Vlasov-Klein-Gordon system via the method of characteristics. 
Precisely, let us introduce the characteristics $(X_{s,t}(x,v),V_{s,t}(x,v))$, which are defined by 
\begin{equation}\label{ode-char} 
\frac{d}{ds}X_{s,t} = \hV_{s,t}, \qquad \frac{d}{ds}V_{s,t} = E(s,X_{s,t})
 \end{equation}
with initial data 
$$
X_{t,t} (x,v)= x, \qquad V_{t,t}(x,v)= v,
$$ 
where we recall the notation for the relativistic velocity 
$\hv  = v/\sqrt{1+|v|^2}$. 
Throughout this paper, we work with the characteristics ``backwards'', namely for $s \le t$.
By definition, we have 
\begin{equation} \label{ode-char2}
\begin{aligned}
X_{s,t} (x,v) &= x - \int_s^t \hV_{\tau,t}(x,v) \; d\tau , 
\\
 V_{s,t}(x,v) &= v - \int_s^t E(\tau, X_{\tau,t}(x,v)) \; d\tau.
\end{aligned}
\end{equation}
Integrating \eqref{VKG1} along characteristics, we obtain an explicit formula for the solution, namely 
\begin{equation}
\label{charf1}
f(t,x,v)= f_0(X_{0,t}(x,v) , V_{0,t}(x,v)) 
\end{equation}
whose density is computed by 
\begin{equation}\label{nonlinear-rho} 
\begin{aligned}
\rho(t,x) 
&= 
\int_{\RR^3} f_0(X_{0,t}(x,v) , V_{0,t}(x,v)) \, dv.
\end{aligned}
\end{equation}


\subsection{Nonlinear electric field}\label{sec-nonlinearE}


We now invert the Klein-Gordon equation \eqref{VKG2}. We first recall the following classical result. 

\begin{lemma} Let $\phi$ solve the Klein-Gordon equation 
$$(\Box +1)\phi = - \rho$$ 
with initial data $\phi_{\vert_{t=0}} = \phi_0$ and $\partial_t\phi_{\vert_{t=0}} = \phi_1$. Then, there holds the following representation 
\begin{equation}\label{rep-phi}
\phi(t,x) = G(t,x) \star_x\phi_1(x) + \partial_tG(t,x) \star_x\phi_0(x) -  G(t,x) \star_{t,x} \rho(t,x),
\end{equation}
where $G(t,x)$ denotes the Green function of the Klein-Gordon operator $\Box+1$ whose Fourier transform reads
\begin{equation}\label{def-Gr}
\FG(t,k) = \frac{1}{2i\langle k\rangle} \Big( e^{it \langle k\rangle} - e^{-it\langle k\rangle} \Big) = \frac{\sin (t\langle k\rangle)}{\langle k\rangle}
.\end{equation}
\end{lemma}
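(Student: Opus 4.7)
The plan is to reduce the problem to a scalar ODE in time by taking the spatial Fourier transform, solve the resulting ODE via the standard combination of the homogeneous flow plus Duhamel, and then recognize that the time-multipliers that appear are precisely $\widehat{G}(t,k)$ and $\partial_t\widehat{G}(t,k)$ so that the spatial inverse Fourier transform produces the claimed convolution representation.

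First, I would apply the Fourier transform in $x$ to the equation $(\Box_{t,x}+1)\phi = -\rho$ and to the initial conditions, which gives the second-order linear ODE
\begin{equation*}
\partial_t^2 \widehat{\phi}(t,k) + \langle k\rangle^2 \widehat{\phi}(t,k) = - \widehat{\rho}(t,k), \qquad \widehat{\phi}(0,k) = \widehat{\phi}_0(k), \qquad \partial_t \widehat{\phi}(0,k) = \widehat{\phi}_1(k),
\end{equation*}
for each fixed frequency $k \in \mathbb{R}^3$. The two independent homogeneous solutions are $\cos(t\langle k\rangle)$ and $\sin(t\langle k\rangle)/\langle k\rangle$, matching the initial data requires the combination $\cos(t\langle k\rangle)\widehat{\phi}_0(k) + \langle k\rangle^{-1}\sin(t\langle k\rangle)\widehat{\phi}_1(k)$, and Duhamel's formula then adds the particular contribution $-\int_0^t \langle k\rangle^{-1}\sin((t-s)\langle k\rangle)\,\widehat{\rho}(s,k)\,ds$.

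Next, I would observe from the definition \eqref{def-Gr} that $\widehat{G}(t,k) = \sin(t\langle k\rangle)/\langle k\rangle$ and $\partial_t \widehat{G}(t,k) = \cos(t\langle k\rangle)$, so the Fourier-side formula above rewrites as
\begin{equation*}
\widehat{\phi}(t,k) = \partial_t \widehat{G}(t,k)\,\widehat{\phi}_0(k) + \widehat{G}(t,k)\,\widehat{\phi}_1(k) - \int_0^t \widehat{G}(t-s,k)\,\widehat{\rho}(s,k)\, ds.
\end{equation*}
Taking the inverse Fourier transform in $x$ converts the three frequency-multiplications into spatial convolutions, and the time integral of the last term becomes a spacetime convolution $G \star_{t,x} \rho$, which is exactly the representation \eqref{rep-phi}.

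The only subtle point is justifying the Fourier-side manipulations and the inverse transform with enough regularity/decay on $\rho$, $\phi_0$, $\phi_1$ so that the convolutions make sense; under the smoothness and integrability assumptions built into the bootstrap framework of Section \ref{sec-nonlinearframework} this is immediate, so I do not expect any genuine obstacle beyond standard Fourier bookkeeping.
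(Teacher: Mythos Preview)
Your proof is correct and arrives at the same formula, but it takes a slightly different route from the paper. The paper applies a Fourier--Laplace transform in both space and time, obtaining
\[
\widetilde{\phi}(\lambda,k) = \frac{\widehat{\phi}_1 + \lambda\widehat{\phi}_0 + \widetilde{\rho}(\lambda,k)}{\lambda^2 + 1 + k^2},
\]
and then inverts the Laplace transform by computing residues at the poles $\lambda_\pm(k) = \pm i\langle k\rangle$ to extract $\widehat{G}(t,k)$. Your approach instead takes only the spatial Fourier transform and solves the resulting second-order ODE in $t$ directly via Duhamel with the explicit homogeneous solutions $\cos(t\langle k\rangle)$ and $\sin(t\langle k\rangle)/\langle k\rangle$. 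Your argument is more elementary in that it avoids the Laplace inversion and residue computation altogether; the paper's approach is more systematic in the sense that it derives $\widehat{G}(t,k)$ from the resolvent rather than writing it down by inspection. Both are standard and equally valid here.
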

\begin{proof} Indeed, the solution to the Klein-Gordon equation in Fourier-Laplace variables is of the form 
$$
\Tphi(\lambda,k) = \frac{ \Fphi_1 + \lambda\Fphi_0 + \Trho(\lambda,k)}{\lambda^2 + 1 + k^2}.
$$
Thus, let 
$$ 
\FG(t,k) = \frac{1}{2\pi i} \int_{\Re \lambda \gg1} e^{\lambda t} \frac{1}{\lambda^2 + 1 + k^2} \; d\lambda
$$
be the Fourier transform of $G(t,x)$ in the variable $x$. 
Observe that the Green kernel $\frac{1}{\lambda^2 + 1 + k^2}$ is holomorphic and has poles at $\lambda_\pm(k) = \pm i \langle k\rangle$ with notation $\langle k \rangle =\sqrt{1+|k|^2}$, yielding \eqref{def-Gr}. 
Observe that $\FG(0,k) =0$ and $\partial_t \FG(0,k) = 1$. Therefore, upon taking the inverse Laplace transform, we obtain 
\begin{equation}
\label{dephi} 
\Fphi(t,k) = \FG(t,k) \Fphi_1(k) + \partial_t\FG(t,k) \Fphi_0(k) - \FG(t,k) \star_t \Frho(t,k),
\end{equation}
which gives the lemma. 
\end{proof}

\begin{corollary}[Nonlinear electric field]\label{cor-E} Introduce oscillatory kernels 
\begin{equation}\label{def-FG}\FG_\pm^{osc}(t,k) = e^{\lambda_\pm(k)t}a_\pm(k)\end{equation}
with $\lambda_\pm(k) = \pm i \langle k\rangle $ and $a_\pm(k) = \mp\frac{ i }{2} \langle k\rangle^{-1}.$
Then, the nonlinear electric field $E(t,x)$ of \eqref{VKG1}-\eqref{VKG2}can be expressed as  
\begin{equation}\label{rep-E2}
E(t,x) =  \sum_\pm E^{osc}_\pm(t,x), 
\end{equation}
where 
\begin{equation}\label{rep-E3}
 E^{osc}_\pm(t,x) =G_\pm^{osc}(t,x) \star_{x}  \nabla_x S_\pm^0(x) + G_\pm^{osc}(t,x) \star_{t,x} \nabla_x \rho(t,x),
 \end{equation}
for the nonlinear density $\rho(t,x)$ defined as in \eqref{nonlinear-rho}, and for initial data term $S_\pm^0(x)$ given by 
$$S_\pm^0(x) = - \phi_1(x) -\lambda_\pm(i\partial_x) \phi_0(x),$$
for $\phi_0, \phi_1$ being the initial data for the electric potential. 
\end{corollary}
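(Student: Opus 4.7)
The plan is to apply the gradient $-\nabla_x$ to the representation \eqref{rep-phi} supplied by Lemma 2.1 and then split the Klein-Gordon Green function into its two oscillatory branches corresponding to the poles $\lambda_\pm(k) = \pm i\langle k\rangle$ of the resolvent $(\lambda^2 + 1 + k^2)^{-1}$. By partial fractions in $\lambda$, one writes
$$ \FG(t,k) = \frac{1}{2i\langle k\rangle}\bigl(e^{i\langle k\rangle t} - e^{-i\langle k\rangle t}\bigr) = \FG_+^{osc}(t,k) + \FG_-^{osc}(t,k), $$
where $\FG_\pm^{osc}(t,k) = a_\pm(k) e^{\lambda_\pm(k) t}$ with $a_\pm(k) = \mp \tfrac{i}{2}\langle k\rangle^{-1}$, matching \eqref{def-FG} exactly. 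Differentiation in time yields the companion identity $\partial_t \FG(t,k) = \cos(t\langle k\rangle) = \sum_\pm \lambda_\pm(k)\, \FG_\pm^{osc}(t,k)$, both facts being elementary algebra.

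Next, I would substitute these decompositions into \eqref{rep-phi}, apply $-\nabla_x$, and collect terms sharing the label $\pm$. The convolution against $\phi_1$ contributes $G_\pm^{osc} \star_x (-\nabla_x \phi_1)$; the convolution against $\phi_0$ contributes $G_\pm^{osc} \star_x (-\lambda_\pm(i\partial_x)\nabla_x \phi_0)$, where $\lambda_\pm(i\partial_x)$ is the Fourier multiplier with symbol $\lambda_\pm(k)$; and the spacetime convolution against $\rho$, picking up an overall sign from $E = -\nabla_x \phi$, contributes $G_\pm^{osc} \star_{t,x} \nabla_x \rho$. Packaging the two initial data contributions into a single spatial convolution against $\nabla_x S_\pm^0$ with
$$ S_\pm^0(x) = -\phi_1(x) - \lambda_\pm(i\partial_x)\phi_0(x) $$
yields precisely \eqref{rep-E3}, and summing over $\pm$ reconstructs the full $E$ in the form \eqref{rep-E2}.

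There is essentially no analytic obstacle here, since everything is a direct algebraic manipulation of the identity already established in Lemma 2.1; the only point worth explicitly noting is that the Fourier multipliers $\lambda_\pm(i\partial_x)$ have symbol bounded by $\langle k\rangle$, so they act meaningfully on the initial data at the regularity level imposed by \eqref{data-assumptions} and the resulting convolutions converge in the standard sense. The content of the corollary lies not in the difficulty of its proof but in the \emph{form} of the conclusion: it cleanly isolates the two dispersive branches, exhibits the effective initial data sources $S_\pm^0$ that incorporate both $\phi_0$ and $\phi_1$, and expresses the full nonlinear coupling as a single spacetime convolution of the oscillatory kernel $G_\pm^{osc}$ against $\nabla_x \rho$ — precisely the structure that will later be exploited via the integration-by-parts identity \eqref{keyint2}.
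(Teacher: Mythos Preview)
Your proposal is correct and follows essentially the same route as the paper's own proof: both split $\FG(t,k)$ into the two oscillatory branches $\FG_\pm^{osc}(t,k)$, use the identity $\partial_t\FG(t,k) = \sum_\pm \lambda_\pm(k)\FG_\pm^{osc}(t,k)$ to handle the $\phi_0$ term, and then apply $E=-\nabla_x\phi$ to the representation \eqref{rep-phi}. Your write-up is in fact somewhat more explicit than the paper's, which dispatches the corollary in three lines.
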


\begin{proof} By definition, note that $\FG(t,k) = \sum_\pm \FG_\pm^{osc}(t,k)$, and so 
$$ \FG(t,k) \Fphi_1(k) + \partial_t\FG(t,k) \Fphi_0(k) = \sum_\pm \FG_\pm^{osc}(t,k) \Big[ \Fphi_1(k) + \lambda_\pm(k) \Fphi_0(k)  \Big].$$
The representation \eqref{rep-E2}-\eqref{rep-E3} thus follow from that of \eqref{rep-phi}, upon recalling that $E = -\nabla_x \phi$. 
\end{proof}

%
%
%

\subsection{Nonlinear iterative scheme}\label{sec-bootstrap}

In this section, we introduce a nonlinear iterative scheme to establish the large time behavior of the nonlinear solutions to the Vlasov-Klein-Gordon system \eqref{VKG1}-\eqref{VKG2}. The crucial idea is to bootstrap not only the decay but also the oscillation structure of the electric field. Namely, in view of \eqref{rep-E2}-\eqref{rep-E3}, we shall bootstrap the electric field of the oscillatory form 
\begin{equation}\label{E-bootstrap}E(t,x) =  \sum_\pm E^{osc}_\pm(t,x) + E^r(t,x),\end{equation}
\begin{equation}\label{Eosc-bootstrap} E^{osc}_\pm(t,x) =G_\pm^{osc}(t,x) \star_{x} F_0(x) + G_\pm^{osc}(t,x) \star_{t,x} F(t,x),
\end{equation}
in which $F_0(x)$ involves only initial data. Note that $F_0(x), F(t,x),$ and $E^r(t,x)$ are a gradient of some density, namely
\begin{equation}\label{grad-F} F_0(x) = \nabla_x S_0(x), \qquad F(t,x) = \nabla_x S(t,x).
\end{equation}

Fix $\alpha_0 \ge 8$ and a sufficiently small $\epsilon>0$. Our bootstrap assumptions are as follows:
\begin{itemize}

\item Decay assumptions: 
\begin{equation}\label{bootstrap-decayS}
\begin{aligned}
\|S(t)\|_{L^p_x}  + \|\partial_xS(t)\|_{L^p_x}   &\le \epsilon \langle t\rangle^{-3(1-1/p)} , \qquad p\in [1,\infty],
\\
\|E^r(t)\|_{L^p_x}  &\le \epsilon \langle t\rangle^{-3(1-1/p)} , \qquad p\in [1,\infty],
\\
\| E^{osc}_\pm(t)\|_{L^p_x} 
&\le \epsilon \langle t\rangle^{-3(1/2-1/p)} , \qquad p\in [2,\infty] .
\end{aligned}\end{equation}

\item Decay assumptions for higher derivatives: 
\begin{equation}\label{bootstrap-decaydaS}
\begin{aligned}
\|\partial_x^\alpha S(t)\|_{L^p_x} + \|\partial_x^\alpha E^r(t)\|_{L^p_x}   &\le \epsilon \langle t\rangle^{-3(1-1/p) + \delta_\alpha} , \qquad p\in [1,\infty], 
 \\
 \|\partial_x^{\alpha+1} E^r(t)\|_{L^2_x}  & \le \epsilon \langle t\rangle^{-3/2 + \delta_\alpha}
 \end{aligned}\end{equation}
for all $1\le |\alpha|\le |\alpha_0|-1$, with $\delta_\alpha = \frac{|\alpha|}{|\alpha_0|-1}$. 

\item Boundedness assumptions:
\begin{equation}\label{bootstrap-Hs}
\begin{aligned}
\| E^{osc}_\pm(t)\|_{H^{\alpha_0}_x} \le \epsilon , \qquad \| E^{osc}_\pm(t)\|_{H^{\alpha_0+1}_x} +  \| E^r(t)\|_{H^{\alpha_0+1}_x} + \| S(t)\|_{H_x^{\alpha_0}}
\le \epsilon \langle t\rangle^{\delta_1},
\end{aligned}
\end{equation}
with $\delta_1 = \frac{1}{|\alpha_0|-1}$. 
 \end{itemize}
Observe that the oscillatory field $E^{osc}_\pm(t,x)$ disperses in space like a Klein-Gordon wave at a rate of order $t^{-3/2}$, while the source density $S(t,x)$ disperses in space at a faster rate of order $t^{-3}$, dictated by the free transport dynamics. We observe that the boundedness of $E^{osc}_\pm$ in $L^2$ follows from the decay of $S(t)$ in $L^2$. Indeed, since the Fourier symbol of $\nabla_xG^{osc}_\pm$ is uniformly bounded, namely $ik \langle k\rangle^{-1}e^{\pm i \langle k\rangle}$, we get  
$$\| G^{osc}_\pm\star_{t,x} \nabla_x S\|_{L^2_x} \le \int_0^t \| G^{osc}_\pm(t-s)\star_{x} \nabla_x S(s)\|_{L^2_x} \; ds\lesssim \int_0^t \|S(s)\|_{L^2_x} \; ds \lesssim \epsilon.$$ 
In addition, by interpolation \eqref{dx-interpolate}, for $|\alpha|\le |\alpha_0|-1$, we observe that 
\begin{equation}\label{bootstrap-decaydxE}\| \partial^\alpha_x E^{osc}_\pm(t)\|_{L^\infty}  \lesssim\| E^{osc}_\pm(t)\|_{L^\infty}^{1-\frac{|\alpha|}{|\alpha_0|-1}} \| E^{osc}_\pm(t)\|_{H^{\alpha_0+1}}^{\frac{|\alpha|}{|\alpha_0|-1}} \lesssim \epsilon \langle t\rangle^{-\frac32 (1-\delta_{\alpha})+ \epsilon_\alpha},\end{equation}
where $\delta_{\alpha} = \frac{|\alpha|}{|\alpha_0|-1}$ and $\epsilon_\alpha = \frac{|\alpha|}{(|\alpha_0|-1)^2}$. However, we note that the decay assumptions for higher derivatives \eqref{bootstrap-decaydaS} of $S(t,x)$ do not follow from some simple interpolation inequalities between decay for low norms and boundedness for high norms. The structure of $S(t,x)$ plays an important role in deriving such decay estimates, see Section \ref{sec-sourceest}. Furthermore, the decay assumptions on $S(t)$ are not sufficient to derive the dispersive decay for $E^{osc}_\pm(t,x)$ in the $L^p_x$ norm 
through the spacetime convolution. In the nonlinear analysis, we need to further examine the structure of $S$ and the interplay between transport and oscillations to derive such a sharp dispersive decay for the field $E^{osc}_\pm(t,x)$, see Section \ref{sec-decayosc}. 

Finally, to better keep track of the oscillations of the electric field from \eqref{E-bootstrap}, we write in Fourier space 
\begin{equation} \label{defiFB}
\FE^{osc}_\pm(t,k) = e^{\lambda_{\pm}(k) t} \FB_\pm(t,k)
\end{equation}
where by construction 
\begin{equation}\label{def-Bpm}
\FB_\pm(t,k) = a_\pm(k) \FF_0(k)+  \int_0^t e^{-\lambda_\pm(k)s} a_\pm(k) \FF(s,k)\; ds, 
\end{equation}
recalling that $\lambda_\pm(k) = \pm i \langle k\rangle$ and $a_\pm(k) = \frac{\pm 1}{2i \langle k\rangle}$ as defined in \eqref{def-FG}. 
In particular, we note that 
\begin{equation}\label{dtBF}
e^{\lambda_\pm(k)t} \partial_t \FB_\pm(t,k)  = a_\pm(k) \FF(t,k) 
\end{equation}
which plays a role in the nonlinear analysis, since $F(t,x)$ decays faster, namely at quadratic order, 
when compared with that of $E^{osc}_\pm(t,x)$.

\subsection{The bootstrap argument}

Applying the standard local-in-time existence theory,  the bootstrap estimates \eqref{bootstrap-decayS}-\eqref{bootstrap-Hs} hold for $t \in [0,T]$ for some small $T>0$. Suppose that there is a finite time $T_*$ so that the bootstrap assumptions hold for all $t \in [0,T_*)$. It suffices to prove that they remain valid for $t=T_*$, and hence the solution is global in time and satisfies the stated bounds. We shall prove that there are universal constants $C_0, C_1$ so that for all $t \in [0,T_*]$, the electric field $E$ can be expressed as in \eqref{E-bootstrap}-\eqref{grad-F}, and in addition, 
there hold 
\begin{equation}\label{bootstrap-gdecayS}
\begin{aligned}
\|S(t)\|_{H^{\alpha_0}_x}   &\le \Big(C_0 \epsilon_0 + C_1\epsilon^2\Big) \langle t\rangle^{\delta_1},
\\
\| S(t)\|_{W^{1,p}_x}   &\le \Big(C_0 \epsilon_0 + C_1\epsilon^2\Big) \langle t\rangle^{-3(1-1/p)} , \qquad p\in [1,\infty],
\\
\| \partial_x^\alpha S(t)\|_{L^p_x}   &\le \Big(C_0 \epsilon_0 + C_1\epsilon^2\Big) \langle t\rangle^{-3(1-1/p)+\delta_\alpha} , \qquad p\in [1,\infty],
\end{aligned}\end{equation}
for $|\alpha|\le |\alpha_0|-1$, and 
\begin{equation}\label{bootstrap-gdecayE} 
\begin{aligned}
\|E^{osc}_\pm(t)\|_{H^{\alpha_0}_x}   &\le \Big(C_0 \epsilon_0 + C_1\epsilon^2\Big) ,
\\
\|E^{osc}_\pm(t)\|_{H^{\alpha_0+1}_x}   &\le \Big(C_0 \epsilon_0 + C_1\epsilon^2\Big) \langle t\rangle^{\delta_1},\\
\| E^{osc}_\pm(t)\|_{L^p_x} 
&\le \Big(C_0 \epsilon_0 + C_1\epsilon^2\Big) \langle t\rangle^{-3(1/2-1/p)} , \qquad p\in [2,\infty]. 
\end{aligned}\end{equation}
Similar bounds are obtained for $E^r(t,x)$. The decay and boundedness of $S(t)$ will be proved in Section \ref{sec-sourceest}, while the decay and boundedness of the electric field $E(t)$ will be proved in Section \ref{sec-decayosc}. The main analysis of this paper lies in the decay and boundedness of the nonlinear characteristic $X_{s,t}, V_{s,t}$, which will be established in Section \ref{sec-Char}. Finally, provided \eqref{bootstrap-gdecayS}-\eqref{bootstrap-gdecayE}, we may choose $\epsilon_0 \ll \epsilon \ll1$ so that 
$$ C_0 \epsilon_0 + C_1 \epsilon^2 < \epsilon. $$
That is, the bootstrap assumptions \eqref{bootstrap-decayS}-\eqref{bootstrap-Hs} indeed hold for $t=T_*$. The main theorem thus follows. 


\section{Characteristics}\label{sec-Char}



\subsection{Introduction}


Throughout this section, the electric field is assumed to satisfy all the bootstrap assumptions listed in Section \ref{sec-bootstrap}. The main analysis is then to derive decay and boundedness of the characteristics. Specifically, let $X_{s,t}(x,v)$ and $V_{s,t}(x,v)$ be the nonlinear characteristics. As the electric field satisfies $\|E(t)\|_{L^\infty_x}  \lesssim \epsilon \langle t \rangle^{-3/2}$, it directly follows from \eqref{ode-char2} that 
\begin{equation}\label{quick-bdV}
 \| V_{s,t} - v\|_{L^\infty_{x,v}} \lesssim  \epsilon \langle s\rangle^{-1/2} ,
 \end{equation}
namely, $V_{s,t}(x,v)$ remains close to $v$. 
However, such a bound on the velocity is too weak to precisely locate the position of the moving particles $X_{s,t} (x,v)$.
As $E$ has on oscillatory component, the velocities and the trajectories of the particle oscillate. 
We need to extract these oscillations in order to get better bounds on the velocities and a good localisation of the electrons. Throughout this section, we assume that the electric field is of the form
\begin{equation}\label{bootstrapE1}
E(t,x) = \sum_\pm E^{osc}_\pm(t,x) + E^r(t,x)
\end{equation} 
with the oscillatory electric field $E^{osc}_\pm$ of the form \eqref{Eosc-bootstrap}, plus a remainder $E^r(t,x)$ satisfying the bootstrap assumptions. 
In what follows, $X_{s,t}(x,v)$ and $V_{s,t}(x,v)$ are the nonlinear characteristic solving \eqref{ode-char}. 

\subsection{Collective electric fields}

For $j\ge 1$, we set  
\begin{equation}\label{def-Eoscj}
E^{osc,j}_\pm(t,x,v) = [E^{osc}_{\pm} \star_{x} \phi_{\pm,j}](t,x,v)
\end{equation}
where 
\begin{equation}\label{def-phipmj} \phi_{\pm,j}(x,v) = \int_{\RR^3} e^{ik\cdot x} \frac{1}{(\lambda_\pm(k) + ik\cdot \hv)^{j}} \; dk.
\end{equation}
Then, we obtain the following lemma. 
\begin{lemma}\label{lem-PEosc}
For any compact subset $K \Subset \RR^3$ and $\delta>0$, there hold  
\begin{equation}\label{Lp-convphi}
\begin{aligned}
\| \sup_{v\in K}  \phi_{\pm,j}(x,v) \star_x \partial_x^\alpha f \|_{L^p_x} &\lesssim \| f\|_{L^p_x} ,\qquad j>|\alpha|, 
\\
\| \sup_{v\in K}  \phi_{\pm,j}(x,v) \star_x \partial^\alpha_xf \|_{L^p_x} &\lesssim \| f\|^{1/2}_{L^p_x} \| \partial_x f\|^{1/2}_{L^p_x} , \qquad j = |\alpha|, 
\end{aligned}\end{equation}
for any $\alpha\ge 0$, $j\ge 1$, and $1\le p\le \infty$. 
\end{lemma}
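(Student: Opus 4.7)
The plan is to work on the Fourier side, where the operator $f\mapsto\phi_{\pm,j}(\cdot,v)\star_x\partial_x^\alpha f$ is a Fourier multiplier with symbol
\[
m(k,v):=\frac{(ik)^\alpha}{(\lambda_\pm(k)+ik\cdot\hv)^j}.
\]
Because $|\hv|<1$ and $K$ is compact, $c_K:=\sup_{v\in K}|\hv|<1$, and the elementary identity $\lambda_\pm(k)+ik\cdot\hv=i(\pm\langle k\rangle+k\cdot\hv)$ together with $|k|\le\langle k\rangle$ gives
\[
|\lambda_\pm(k)+ik\cdot\hv|\ge\langle k\rangle-c_K|k|\ge(1-c_K)\langle k\rangle.
\]
This is the only place where the compactness of $K$ is used, and it yields the uniform symbol estimate $|\partial_k^\beta m(k,v)|\lesssim_K \langle k\rangle^{|\alpha|-j-|\beta|}$ for all $v\in K$, thanks to the standard fact that $\partial_k^\beta\langle k\rangle$ is bounded by $\langle k\rangle^{1-|\beta|}$.

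\textbf{Case $j>|\alpha|$.} I would construct a pointwise kernel envelope $H(x)\in L^1(\RR^3)$, independent of $v\in K$, such that the convolution kernel $K(x,v)$ of $m(k,v)$ obeys $|K(x,v)|\le H(x)$. Young's inequality then delivers $|\phi_{\pm,j}(\cdot,v)\star\partial^\alpha f(x)|\le H\star|f|(x)$, and the sup in $v$ passes harmlessly inside to yield the claimed $L^p$ bound. The envelope is built via a dyadic decomposition $m=\chi_L m+\sum_{n\ge1}\varphi_n m$: the low-frequency piece is Schwartz-like in $x$ by integration by parts in $k$, and for each dyadic annulus the symbol estimate produces $|K_n(x,v)|\lesssim 2^{n(3+|\alpha|-j)}(1+2^n|x|)^{-N}$ for any $N$, hence $\|K_n\|_{L^1_x}\lesssim 2^{n(|\alpha|-j)}$, summable in $n$ because $j-|\alpha|\ge1$.

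\textbf{Case $j=|\alpha|$.} The symbol now has order zero but vanishes at $k=0$ of order $|\alpha|$. With the same low/high splitting, the low-frequency part $T_L f$ admits two independent bounds: directly $\|K_L\|_{L^1}\lesssim 1$ yields $\|T_Lf\|_{L^p}\lesssim\|f\|_{L^p}$, while factoring one monomial $ik_i$ (for any $i$ with $\alpha_i\ge 1$) out of $(ik)^\alpha$ shows that $T_Lf$ equals a convolution of $\partial_i f$ against an $L^1$ kernel, giving $\|T_Lf\|_{L^p}\lesssim\|\partial f\|_{L^p}$; combining, $\|T_Lf\|_{L^p}\lesssim\|f\|^{1/2}_{L^p}\|\partial f\|^{1/2}_{L^p}$. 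Each high-frequency piece $T_n f$ satisfies $\|K_n\|_{L^1}\lesssim 1$ uniformly in $n,v$, whence, using fattened Littlewood--Paley projections,
\[
\|T_n f\|_{L^p}\lesssim \|P_n f\|_{L^p}\lesssim \min\!\bigl(\|f\|_{L^p},\,2^{-n}\|\partial f\|_{L^p}\bigr)
\]
by Bernstein. Choosing the dyadic cutoff $n_*\sim\log_2(\|\partial f\|_{L^p}/\|f\|_{L^p})_+$ and summing yields a control of order $n_*\|f\|_{L^p}+2^{-n_*}\|\partial f\|_{L^p}$; since $\log x\le x^{1/2}$ for $x\ge1$, this is bounded by $\|f\|^{1/2}_{L^p}\|\partial f\|^{1/2}_{L^p}$, and the $v$-uniformity of every kernel estimate again lets $\sup_{v\in K}$ enter the $L^p_x$ norm.

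The main obstacle is the endpoint regime $j=|\alpha|$ at $p=1,\infty$: the symbol is morally a Calder\'on--Zygmund symbol (zeroth order, no decay at infinity), so no naive Young-type estimate exists, and ordinary CZ theory would only cover $1<p<\infty$. The geometric-mean form $\|f\|^{1/2}\|\partial f\|^{1/2}$ is precisely what emerges from the Bernstein optimization across the endpoints, and its appearance is dictated by the vanishing of $m$ at $k=0$ of exact order $|\alpha|$ together with the absence of decay at infinity.
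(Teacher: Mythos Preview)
Your argument is correct and essentially self-contained, but the paper takes a different and more economical route. Rather than treating the $v$-dependent symbol $m(k,v)$ directly, the paper expands it as a geometric series
\[
\frac{1}{\lambda_\pm(k)+ik\cdot\hv}=\mp i\langle k\rangle^{-1}\sum_{n\ge0}(\pm1)^n\langle k\rangle^{-n}(k\cdot\hv)^n=\sum_{n\ge0}a_{\pm,n}(k)::\hv^{\otimes n},
\]
thereby separating $k$ and $v$. Each coefficient $a_{\pm,n}(k)$ is a $v$-independent multiplier with $|a_{\pm,n}(k)|\le\langle k\rangle^{-1}$, and the series converges absolutely since $|\hv|\le c_K<1$ on $K$. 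The lemma then follows immediately from the appendix multiplier result (Lemma~\ref{lem:fouriermult}), which handles the cases $j>|\alpha|$ and $j=|\alpha|$ via parts (1) and (2) respectively. In particular, for $j=|\alpha|$ the paper factors one gradient and applies part (2) with $\delta=1$, getting the clean balance $2^A\|f\|_{L^p}+2^{-A}\|\nabla f\|_{L^p}$ and hence the geometric mean without any logarithmic detour. Your approach instead builds $v$-uniform $L^1$ kernel envelopes dyadically and optimizes via Bernstein, picking up a harmless $\log$ factor that you then absorb by $\log x\lesssim x^{1/2}$. What your approach buys is independence from the series trick (it would work for more general $v$-dependent symbols that do not split so nicely); what the paper's approach buys is brevity and a direct reduction to a single reusable lemma.
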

\begin{proof}
Observe that for each $v$, the symbol $\lambda_\pm(k) + ik\cdot \hv$ never vanishes, recalling $\lambda_\pm(k) = \pm i \langle k\rangle$. Therefore, we may write 
\begin{equation}\label{expand-phi1} \frac{1}{\lambda_\pm(k) + ik \cdot \hv} = \mp i \langle k\rangle^{-1} \sum_{n\ge 0} (\pm 1)^n\langle k\rangle^{-n} (k\cdot \hat v)^n = \sum_{n\ge 0}  a_{\pm,n}(k) :: \hv^{\otimes n}\end{equation}
for $a_{\pm,n}(k) =  \mp i \langle k\rangle^{-1} (\pm 1)^n\langle k\rangle^{-n} k^{\otimes n}$, which are smooth Fourier multipliers and satisfy $|a_{\pm,n}(k)|\le \langle k\rangle^{-1}$. In the above the notation $k^{\otimes n}::\hv^{\otimes n} = (k\cdot \hv)^n$ is used only for sake of presentation. The series is absolutely converging for bounded $v$, since $\lambda_\pm(k) + ik \cdot \hv$ is bounded away from zero. The lemma thus follows from the results obtained in Lemma \ref{sec-FM}. 
\end{proof}

\begin{corollary}\label{cor-Ejosc}
Under the bootstrap assumptions on $E$, with $|\alpha_0| \ge 4$, there hold 
\begin{equation}\label{decay-Eoscj}
\begin{aligned}
\| \sup_{v\in K} \partial_x^\alpha \partial_v^\beta E^{osc,j}_\pm\|_{L^\infty_x} &\lesssim \epsilon \langle t\rangle^{-\frac32}, \qquad j > |\alpha|, 
\\
\| \sup_{v\in K}  \partial^\alpha_x \partial_v^\beta E^{osc,j}_\pm\|_{L^\infty_x} &\lesssim \epsilon \langle t\rangle^{-\frac32 + \delta_1 }, \qquad j = |\alpha|, 
\end{aligned}
\end{equation}
where $\delta_1 = \frac{1}{|\alpha_0|-1}$.
\end{corollary}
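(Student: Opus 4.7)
Since the convolution in \eqref{def-Eoscj} is in $x$ and $\phi_{\pm,j}$ is the only factor carrying the $v$-dependence, I would distribute the derivatives as
$$\partial_x^\alpha \partial_v^\beta E^{osc,j}_\pm(t,x,v) = \bigl[(\partial_v^\beta \phi_{\pm,j})(\cdot,v)\bigr] \star_x \partial_x^\alpha E^{osc}_\pm(t,\cdot).$$
The plan is to verify that $\partial_v^\beta\phi_{\pm,j}$ still belongs to the symbol class handled by Lemma \ref{lem-PEosc}, and then to combine that lemma with the bootstrap assumptions on $E^{osc}_\pm$.

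First I would differentiate term by term the absolutely convergent expansion in \eqref{expand-phi1}, or its $j$-fold analog
$$\phi_{\pm,j}(x,v) = \sum_{n\ge 0} a^{(j)}_{\pm,n}(k) :: \hv^{\otimes n}, \qquad |a^{(j)}_{\pm,n}(k)|\lesssim \langle k\rangle^{-j},$$
in $v$. Since $\hv$ is smooth in $v$ with all derivatives bounded uniformly for $v\in K$ (the compact set keeping $|\hv|$ strictly below $1$), each $v$-derivative merely replaces the polynomial in $\hv$ by another bounded one, without changing the $\langle k\rangle^{-j}$ size of the symbol. Consequently $\sup_{v\in K}|(\partial_v^\beta\phi_{\pm,j})\star_x g|$ is controlled by a finite sum of kernels to which Lemma \ref{lem-PEosc} applies directly.

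For $j > |\alpha|$ the first line of \eqref{Lp-convphi} with $p=\infty$ then yields
$$\|\sup_{v\in K} \partial_x^\alpha\partial_v^\beta E^{osc,j}_\pm\|_{L^\infty_x} \lesssim \|E^{osc}_\pm\|_{L^\infty_x} \lesssim \epsilon \langle t\rangle^{-3/2},$$
where the last inequality is the $p=\infty$ bootstrap bound from \eqref{bootstrap-decayS}; this is exactly the first estimate of \eqref{decay-Eoscj}. In the endpoint case $j=|\alpha|$ the second line of \eqref{Lp-convphi} gives instead the geometric mean
$$\|\sup_{v\in K} \partial_x^\alpha\partial_v^\beta E^{osc,j}_\pm\|_{L^\infty_x} \lesssim \|E^{osc}_\pm\|_{L^\infty_x}^{1/2}\|\partial_x E^{osc}_\pm\|_{L^\infty_x}^{1/2}.$$
The second factor I would estimate via the interpolation \eqref{bootstrap-decaydxE} with $|\alpha|=1$, producing $\|\partial_xE^{osc}_\pm\|_{L^\infty_x}\lesssim \epsilon\langle t\rangle^{-\frac32(1-\delta_1)+\epsilon_1}$. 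Multiplying this with $\|E^{osc}_\pm\|_{L^\infty_x}^{1/2}\lesssim \epsilon^{1/2}\langle t\rangle^{-3/4}$ yields decay of order $\langle t\rangle^{-3/2+3\delta_1/4+\epsilon_1/2}$, which, once $|\alpha_0|\ge 4$ so that $\epsilon_1$ is negligible compared to $\delta_1/4$, is dominated by $\epsilon\langle t\rangle^{-3/2+\delta_1}$, giving the second estimate of \eqref{decay-Eoscj}.

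The only conceptual ingredient is recognising that $\partial_v^\beta$ preserves the multiplier class of Lemma \ref{lem-PEosc}; the rest is a direct application of that lemma combined with the bootstrap decay and interpolation bounds on $E^{osc}_\pm$. The main (and expected) obstacle is the endpoint $j=|\alpha|$, where the logarithmic-type derivative loss of the Fourier multiplier $1/(\lambda_\pm(k)+ik\cdot\hv)$ at this exact order forces us to spend half a derivative on $E^{osc}_\pm$, manifesting as the $\delta_1$ slack in the decay rate.
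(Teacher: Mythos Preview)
Your proposal is correct and follows essentially the same route as the paper: apply Lemma \ref{lem-PEosc} (noting that $\partial_v^\beta$ keeps the symbol in the same class), then feed in the bootstrap decay \eqref{bootstrap-decayS} for $j>|\alpha|$ and the interpolation \eqref{bootstrap-decaydxE} for the endpoint $j=|\alpha|$, arriving at the intermediate exponent $-\tfrac32+\tfrac34\delta_1+\tfrac12\epsilon_1$. The paper records the same intermediate bound and notes that $\tfrac34\delta_1+\tfrac12\epsilon_1\le\delta_1$ already holds for $|\alpha_0|\ge 3$, so your requirement $|\alpha_0|\ge 4$ is slightly more generous than necessary but consistent with the corollary's hypothesis.
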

\begin{proof}
The corollary follows from the definition \eqref{def-Eoscj},  \eqref{Lp-convphi}, and the bootstrap assumptions on the decay of $E$ from \eqref{bootstrap-decayS} and that of its derivatives from \eqref{bootstrap-decaydxE}. Note that in the case $j=|\alpha|$, we in fact have 
$$\| \sup_{v\in K}  \partial^\alpha_x \partial_v^\beta E^{osc,j}_\pm\|_{L^\infty_x} \lesssim \epsilon \langle t\rangle^{-\frac32 + \frac34\delta_1 + \frac12 \epsilon_1},$$
with $\delta_1 = \frac{1}{|\alpha_0|-1}$ and $\epsilon_1= \frac{1}{(|\alpha_0|-1)^2}$. The stated estimate follows, since 
$\frac34\delta_1 + \frac12 \epsilon_1\le \delta_1,$
provided that $|\alpha_0|\ge 3$. 
\end{proof}

\subsection{Velocity description}

We first study oscillations in the velocity. 

\begin{proposition}\label{prop-charV}
Let $X_{s,t}(x,v), V_{s,t}(x,v)$ be the nonlinear characteristic solving \eqref{ode-char}. Then, there holds
 \begin{equation}\label{decomp-Vst}
 \begin{aligned}
V_{s,t}(x,v) &=
v -V^{osc}_{t,t}(x,v) + V^{osc}_{s,t}(x,v)
+ V^{tr}_{s,t}(x,v)
 \end{aligned}
 \end{equation}
where 
$$
\begin{aligned}
  V^{osc}_{s,t}(x,v) &= \sum_\pm E^{osc,1}_{\pm}(s,X_{s,t}(x,v),V_{s,t}(x,v)) 
 \\
V^{tr}_{s,t}(x,v)& = -  \int_s^t  
Q^{tr}(\tau,X_{\tau,t}(x,v), V_{\tau,t}(x,v)) \; d\tau
 \end{aligned}$$
in which $Q^{tr}(t,x,v)$ is defined by 
\begin{equation}\label{def-Qtr}
\begin{aligned} Q^{tr} &=\sum_\pm [a_\pm(i\partial_x) \phi_{\pm,1}\star_x F]
-  \sum_\pm \nabla_v \hv  E\cdot  \nabla_x E^{osc,2}_\pm + E^r.
\end{aligned} 
\end{equation}
In addition, there hold 
\begin{equation}\label{bounds-Vst}
\begin{aligned} 
\|V^{osc}_{s,t}\|_{L^\infty_{x,v}} \lesssim \epsilon \langle s\rangle^{-3/2} ,
\qquad   \|V^{tr}_{s,t}\|_{L^\infty_{x,v}} \lesssim \epsilon \langle s\rangle^{-2} , 
 \end{aligned}
\end{equation}
and 
\begin{equation}\label{decay-Vst}
\|V_{s,t}-v+V^{osc}_{t,t}\|_{L^\infty_{x,v}} \lesssim \epsilon \langle s\rangle^{-3/2} .
\end{equation}
 \end{proposition}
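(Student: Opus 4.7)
The starting point is the integral representation $V_{s,t}(x,v) = v - \int_s^t E(\tau, X_{\tau,t}(x,v)) \, d\tau$ from \eqref{ode-char2}, together with the decomposition $E = \sum_\pm E^{osc}_\pm + E^r$. The strategy is to generalize the elementary identity \eqref{keyint}, which performs an oscillatory integration by parts along a free-transport trajectory, to the genuine nonlinear characteristic. The boundary terms produced by this calculation will become $V^{osc}$, the interior error will become (most of) $V^{tr}$, and the non-oscillatory contribution of $E^r$ will be absorbed directly into $V^{tr}$.

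To execute this, I would work in Fourier, writing $E^{osc}_\pm(\tau, X_{\tau,t}) = \int e^{\psi(\tau,k)} \FB_\pm(\tau,k) \, dk$ with phase $\psi(\tau,k) = ik\cdot X_{\tau,t} + \lambda_\pm(k)\tau$. Along the nonlinear flow $\partial_\tau X_{\tau,t} = \hV_{\tau,t}$, so $\partial_\tau \psi = \lambda_\pm(k) + ik\cdot \hV_{\tau,t}$, which is bounded away from zero by the argument in the proof of Lemma \ref{lem-PEosc} (the non-vanishing of $\omega^v_\pm(k) := \lambda_\pm(k) + ik\cdot \hv$ on compact $v$-sets). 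Writing $e^\psi = (\partial_\tau \psi)^{-1}\partial_\tau e^\psi$ and integrating by parts in $\tau$ produces
\begin{equation*}
\int_s^t E^{osc}_\pm(\tau, X_{\tau,t}) \, d\tau \;=\; E^{osc,1}_\pm(t,x,v) \;-\; E^{osc,1}_\pm(s, X_{s,t}, V_{s,t}) \;-\; \int_s^t R_\pm(\tau,x,v) \, d\tau,
\end{equation*}
where the boundary terms are recognized, via $X_{t,t}=x$, $V_{t,t}=v$, as the $\phi_{\pm,1}$-convolutions in \eqref{def-Eoscj}, and $R_\pm$ collects $\int \partial_\tau[\FB_\pm/\omega^{V_{\tau,t}}_\pm]\, e^\psi\, dk$. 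The two factors in this $\tau$-derivative are identified explicitly: \eqref{dtBF} converts $\partial_\tau \FB_\pm$ into $e^{-\lambda_\pm \tau} a_\pm(k) \FF(\tau,k)$, yielding the convolution $a_\pm(i\partial_x)\phi_{\pm,1} \star_x F$; and $\partial_\tau \omega^{V_{\tau,t}}_\pm = ik\cdot \nabla_v \hv \cdot E(\tau, X_{\tau,t})$ yields the term $(\nabla_v \hv\, E)\cdot \nabla_x E^{osc,2}_\pm$. Summing over $\pm$ and adding $-\int_s^t E^r \, d\tau$ then produces the decomposition \eqref{decomp-Vst} with the stated expression for $Q^{tr}$.

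The pointwise bounds are then largely immediate. The estimate $\|V^{osc}_{s,t}\|_{L^\infty} \lesssim \epsilon \langle s\rangle^{-3/2}$ follows from Corollary \ref{cor-Ejosc} applied with $j=1$, $\alpha=0$. For $V^{tr}_{s,t}$, each of the three pieces of $Q^{tr}$ decays pointwise like $\epsilon \langle \tau\rangle^{-3}$: the first from Lemma \ref{lem-PEosc} together with $\|F(\tau)\|_{L^\infty} \lesssim \epsilon \langle \tau\rangle^{-3}$ in \eqref{bootstrap-decayS}; the second as a product of two $\langle \tau\rangle^{-3/2}$ factors, using \eqref{bootstrap-decayS} for $E$ and Corollary \ref{cor-Ejosc} with $j=2>|\alpha|=1$ for $\nabla_x E^{osc,2}_\pm$; and $E^r$ directly from \eqref{bootstrap-decayS}. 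Integrating in $\tau$ over $[s,t]$ then gives the $\langle s\rangle^{-2}$ bound on $V^{tr}_{s,t}$, and \eqref{decay-Vst} is the triangle inequality applied to \eqref{decomp-Vst}.

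The one technical subtlety is that $\omega^{V_{\tau,t}}_\pm(k)$ is only bounded below for $V_{\tau,t}$ in a fixed compact set, so the operator ``$(\omega^{V_{\tau,t}}_\pm)^{-1}$'' must be realized as a bona fide convolution through the Neumann-type expansion \eqref{expand-phi1} feeding Lemma \ref{lem-PEosc}. It is therefore essential to verify up front that $V_{\tau,t}(x,v)$ remains in a single fixed compact set of $\RR^3$ for all $\tau\in[0,t]$, uniformly in $(x,v)$ on the support of $f_0$; this is guaranteed by the crude estimate \eqref{quick-bdV} combined with the compact $v$-support of the initial data.
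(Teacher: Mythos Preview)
Your proposal is correct and follows essentially the same route as the paper: the same Fourier-side integration by parts along the nonlinear characteristic (the paper's display \eqref{int-Eosc}), the same identification of the boundary terms as $E^{osc,1}_\pm$ and of the interior remainder via \eqref{dtBF} and $\partial_\tau \hV_{\tau,t} = \nabla_v\hv\,E$, and the same pointwise estimates through Corollary \ref{cor-Ejosc} and the bootstrap bounds to get $\|Q^{tr}(\tau)\|_{L^\infty_{x,v}}\lesssim \epsilon\langle\tau\rangle^{-3}$. Your closing remark about needing $V_{\tau,t}$ to stay in a fixed compact set (so that Lemma \ref{lem-PEosc} applies uniformly) is a point the paper leaves implicit but is indeed required.
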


\begin{proof}
Recall that $E = \sum_\pm E^{osc}_\pm$, and therefore, from \eqref{ode-char2}, 
$$
\begin{aligned}V_{s,t}(x,v) =v   - \sum_\pm \int_s^t E^{osc}_\pm(\tau, X_{\tau,t}(x,v)) \; d\tau - \int_s^t E^r(\tau, X_{\tau,t}(x,v)) \; d\tau .
\end{aligned}$$
It suffices to study the integral of $E^{osc}_\pm$ along the characteristic. From \eqref{defiFB}, we write in Fourier space, $
\FE^{osc}_\pm(t,k) = e^{\lambda_{\pm}(k) t} \FB_\pm(t,k) .
$ Therefore, integrating by parts in $\tau$ and recalling that $\partial_\tau X_{\tau,t} = \hV_{\tau,t}$, we compute 
\begin{equation}\label{int-Eosc}
\begin{aligned}
 \int_s^t E^{osc}_{\pm}(\tau, X_{\tau,t}) \; d\tau
& = \int_s^t \int e^{\lambda_\pm(k) \tau + ik\cdot X_{\tau,t}}  \FB(\tau,k) \; dkd\tau 
\\
& = \int  \int_s^t\frac{d}{d\tau} (e^{\lambda_\pm(k) \tau + ik\cdot X_{\tau,t}})  \frac{\FB(\tau,k)}{\lambda_\pm(k) + ik \cdot \hV_{\tau,t}} \; d\tau dk 
\\
& = 
\int \Big[\frac{e^{\lambda_\pm(k) t + ik\cdot x}  \FB(t,k)}{\lambda_\pm(k) + ik \cdot \hv} -  \frac{e^{\lambda_\pm(k) s + ik\cdot X_{s,t}}\FB(s,k)}{\lambda_\pm(k) + ik \cdot \hV_{s,t}} \Big] \; dk\\
& \quad - \int_s^t \int e^{\lambda_\pm(k) \tau + ik\cdot X_{\tau,t}}  \Big[\frac{ \partial_\tau \FB(\tau,k)}{\lambda_\pm(k) + ik \cdot \hV_{\tau,t}} - \frac{ \FB(\tau,k) ik \cdot \partial_\tau \hV_{\tau,t}}{(\lambda_\pm(k) + ik \cdot \hV_{\tau,t})^2}\Big] \; dkd\tau .
\end{aligned}\end{equation}
Defining $\phi_{\pm,j}(x,v)$ as in \eqref{def-phipmj} and recalling from \eqref{defiFB} and \eqref{dtBF} that 
\begin{equation}\label{recall-FB}
 e^{\lambda_{\pm}(k) t} \FB_\pm(t,k) = \FE^{osc}_\pm(t,k) 
, \qquad e^{\lambda_\pm(k)t} \partial_t \FB_\pm(t,k)  = a_\pm(k) \FF(t,k), \end{equation}
and $\partial_\tau \hV_{\tau,t} =(\nabla_v \hv E)(\tau, X_{\tau,t},V_{\tau,t}),$
 we obtain \eqref{decomp-Vst}. It remain to prove the stated estimates. Indeed, 
the estimates on $V^{osc}_{s,t}$ follow from \eqref{decay-Eoscj} and the bootstrap assumption on $E^{osc}_\pm$. On the other hand, recalling that $F = \nabla_x S$ and using Lemma \ref{lem-PEosc}, we bound 
\begin{equation}\label{bdQtr}
\begin{aligned} 
 \| Q^{tr}(t)\|_{L^\infty_{x,v}} 
 &\lesssim \sup_v \sum_\pm \| a_\pm(i\partial_x) \phi_{\pm,1} \star_x \nabla_x S\|_{L^\infty_x}
+  \sum_\pm \|E\|_{L^\infty_x} \|\nabla_x E^{osc,2}_\pm \|_{L^\infty_{x,v}} + \| E^r (t)\|_{L^\infty_x}
\\
&\lesssim \|S\|_{W^{1,\infty}_x}
+  \|E\|^2_{L^\infty_x}  + \| E^r (t)\|_{L^\infty_x}
\\
&\lesssim \epsilon \langle t\rangle^{-3}
,\end{aligned} 
\end{equation}
upon using the bootstrap assumptions on $E$ and $S$. The estimates on $V^{tr}_{s,t}$ follow directly from that of $Q^{tr}(t)$. 
\end{proof}


\subsection{Characteristic description}


In this section, we study oscillations in the characteristic $X_{s,t}(x,v)$. In view of \eqref{ode-char2}, 
we write 
\begin{equation}\label{straightX}
X_{s,t}(x,v) = x - (t-s) \hPsi_{s,t}(x,v)
\end{equation}
where $\hPsi_{s,t}(x,v)$ is the velocity average defined by 
\begin{equation}\label{def-hatPsi}
\hPsi_{s,t} (x,v) = \frac{1}{t-s}\int_s^t \hV_{\tau,t}(x,v) \; d\tau
\end{equation}
in which we recall $\hv  = v/\sqrt{1+|v|^2}$. Note that $|\hPsi_{s,t} (x,v)|<1 $ and so $\Psi_{s,t} = \hPsi_{s,t}/\sqrt{1-|\hPsi_{s,t}|^2}$ is indeed well-defined. 
We first establish the following proposition which gives a precise description of oscillations in $\hPsi_{s,t}(x,v)$. 

\begin{proposition}\label{prop-charPsi} 
Let $X_{s,t}(x,v), V_{s,t}(x,v)$ be the nonlinear characteristic solving \eqref{ode-char}, and let $\hPsi_{s,t}(x,v)$ be defined as in \eqref{def-hatPsi}. Then, 
there hold 
\begin{equation}\label{decomp-Psist}
 \begin{aligned}
\Psi_{s,t}(x,v) &= v -  V^{osc}_{t,t}(x,v)+\Psi^{osc}_{s,t}(x,v)  + \Psi^{tr}_{s,t}(x,v) + \Psi^{R}_{s,t}(x,v)
\\
\hPsi_{s,t}(x,v) &= \hv - \hV^{osc}_{t,t}(x,v) + \hPsi^{osc}_{s,t}(x,v) + \hPsi^{tr}_{s,t}(x,v) + \Psi^{Q}_{s,t}(x,v)
 \end{aligned}
 \end{equation}
where  
$$
\begin{aligned}
\Psi^{osc}_{s,t}(x,v) &= \frac{1}{t-s}\sum_\pm \Big(E^{osc,2}_{\pm}(t,x,v) - E^{osc,2}_{\pm} (s,X_{s,t}(x,v),V_{s,t}(x,v)) \Big)
 \\
\Psi^{tr}_{s,t}(x,v)& = - \frac{1}{t-s} \int_s^t  
[ (\tau-s)  
Q^{tr} + Q^{tr,1}](\tau,X_{\tau,t}(x,v), V_{\tau,t}(x,v)) \; d\tau 
 \end{aligned}$$
and 
$$
\begin{aligned}
\Psi^Q_{s,t}(x,v) &=  \frac{1}{t-s}\int_s^t V_{\tau,t}^Q(x,v)\; d\tau, \qquad 
\\
\Psi^{R}_{s,t}(x,v)  &= \nabla_v \hv \Psi^Q_{s,t}(x,v)  + \cO(\langle v\rangle^5|\hPsi_{s,t}- \hv|^2) 
\end{aligned}$$
with $|V^Q_{s,t} (x,v)|\lesssim \langle v\rangle^{-2}|V_{s,t}-v|^2$, in which $Q^{tr}(t,x,v)$ is defined as in \eqref{def-Qtr} and 
\begin{equation}\label{def-Qtr1}
\begin{aligned} Q^{tr,1} &=\sum_\pm [a_\pm(i\partial_x) F\star_x \phi_{\pm,2}]
- 2 \sum_\pm \nabla_v \hv  E\cdot   \nabla_x E^{osc,3}_\pm .
\end{aligned} 
\end{equation}
Here in \eqref{decomp-Psist}, we abuse the notation to define $\hV^{osc}_{t,t} (x,v)=  \nabla_v \hv V_{t,t}^{osc}(x,v)$  and $\hPsi^{a}_{s,t} (x,v)=  \nabla_v \hv \Psi_{s,t}^a(x,v)$ for $a \in \{osc,tr\}$. 
In particular, there hold  
\begin{equation}\label{bounds-Wst}
\begin{aligned} \|  \Psi^{osc}_{s,t}\|_{L^\infty_{x,v}} &\lesssim \epsilon \langle s\rangle^{-3/2} (t-s)^{-1}, 
\quad  \| \Psi^{tr}_{s,t}\|_{L^\infty_{x,v}} \lesssim \epsilon \langle s\rangle^{-1} \langle t\rangle^{-1} , 
 \\
  \| \Psi^{Q}_{s,t}\|_{L^\infty_{x,v}} &\lesssim \epsilon^2 \langle s\rangle^{-2} \langle t\rangle^{-1} , 
 \qquad  \| \langle v\rangle^{-3}\Psi^{R}_{s,t}\|_{L^\infty_{x,v}} \lesssim \epsilon^2  \langle s\rangle^{-2}\langle t\rangle^{-1},
 \end{aligned}
\end{equation}
and 
\begin{equation}\label{decay-Xst}
\begin{aligned}
\| \hPsi_{s,t}-\hv + \hV^{osc}_{t,t}\|_{L^\infty_{x,v}} & \lesssim  \epsilon \langle s\rangle^{-1}(t-s)^{-1}.
\\
\| X_{s,t}-x + (t-s)(\hv - \hV^{osc}_{t,t})\|_{L^\infty_{x,v}} &\lesssim \epsilon \langle s\rangle^{-1}.
\end{aligned}
\end{equation}
\end{proposition}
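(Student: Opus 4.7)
My plan is to average the decomposition of $V_{\tau,t}$ from Proposition \ref{prop-charV} over $\tau\in[s,t]$, then perform a second integration by parts in $\tau$ on the oscillatory piece --- completely analogous to \eqref{int-Eosc} --- to upgrade $E^{osc,1}_\pm$ to $E^{osc,2}_\pm$ at the endpoints and gain an extra $(\lambda_\pm(k)+ik\cdot\hV_{\tau,t})^{-1}$. The passage from the straight time-average $\bar V_{s,t}=(t-s)^{-1}\int_s^t V_{\tau,t}\,d\tau$ to $\hPsi_{s,t}$ is then by Taylor expansion of $v\mapsto\hv$, and from $\hPsi_{s,t}$ to $\Psi_{s,t}$ by Taylor expansion of the inverse relativistic map $\hPsi\mapsto\hPsi/\sqrt{1-|\hPsi|^2}$.

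\textbf{Averaging and the second Fourier IBP.} Substituting \eqref{decomp-Vst}, the constant $-V^{osc}_{t,t}$ passes through the $\tau$-average unchanged. The transport piece $(t-s)^{-1}\int_s^t V^{tr}_{\tau,t}\,d\tau$ is a double time-integral of $Q^{tr}$ which Fubini collapses to $-(t-s)^{-1}\int_s^t (\tau-s)Q^{tr}(\tau,X_{\tau,t},V_{\tau,t})\,d\tau$, i.e.\ the first half of $\Psi^{tr}_{s,t}$. For the oscillatory piece I write $E^{osc,1}_\pm$ in Fourier as $e^{\lambda_\pm(k)\tau+ik\cdot X_{\tau,t}}\FB_\pm(\tau,k)/(\lambda_\pm(k)+ik\cdot\hV_{\tau,t})$ and integrate by parts as in \eqref{int-Eosc}. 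The boundary terms become $[E^{osc,2}_\pm(t,x,v)-E^{osc,2}_\pm(s,X_{s,t},V_{s,t})]/(t-s)$, i.e.\ $\Psi^{osc}_{s,t}$. The bulk comes from $\partial_\tau\bigl\{\FB_\pm/(\lambda_\pm+ik\cdot\hV_{\tau,t})^2\bigr\}$: the $\partial_\tau\FB_\pm$ contribution, combined with $e^{\lambda_\pm\tau}\partial_\tau\FB_\pm=a_\pm(k)\FF$ from \eqref{dtBF} and inverted via $\phi_{\pm,2}$, reproduces $\sum_\pm a_\pm(i\partial_x)F\star_x\phi_{\pm,2}$; the $-2\FB_\pm\,ik\cdot\partial_\tau\hV_{\tau,t}/(\lambda_\pm+ik\cdot\hV)^3$ contribution, using $\partial_\tau\hV_{\tau,t}=(\nabla_v\hv)E(\tau,X_{\tau,t})$ and inverting via $\phi_{\pm,3}$, produces $-2\sum_\pm(\nabla_v\hv)E\cdot\nabla_x E^{osc,3}_\pm$, reconstructing exactly \eqref{def-Qtr1} (the factor $2$ arises from differentiating the squared denominator).

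\textbf{Taylor expansions and estimates.} To pass to $\hPsi_{s,t}$, I Taylor expand $\hV_{\tau,t}=\hv+(\nabla_v\hv)(V_{\tau,t}-v)+V^Q_{\tau,t}$ with the pointwise remainder $|V^Q_{\tau,t}|\lesssim\langle v\rangle^{-2}|V_{\tau,t}-v|^2$, average in $\tau$, and substitute the decomposition of $\bar V_{s,t}-v$ just obtained; the linear $\nabla_v\hv$ factor distributes over each summand to give the stated $\hPsi$-formula under the proposition's abuse of notation $\hV^{osc}_{t,t}=\nabla_v\hv\,V^{osc}_{t,t}$, $\hPsi^a_{s,t}=\nabla_v\hv\,\Psi^a_{s,t}$. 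For $\Psi_{s,t}=\hPsi_{s,t}/\sqrt{1-|\hPsi_{s,t}|^2}$, Taylor expansion of the inverse relativistic map around $\hv$ (where $v=\hv/\sqrt{1-|\hv|^2}$) gives $\Psi_{s,t}-v=(\nabla_v\hv)^{-1}(\hPsi_{s,t}-\hv)+\cO(\langle v\rangle^5|\hPsi_{s,t}-\hv|^2)$; substituting the $\hPsi_{s,t}$ formula yields the $\Psi_{s,t}$ decomposition, with $\Psi^R_{s,t}$ absorbing the $\Psi^Q$-related correction and the quadratic error. The $L^\infty_{x,v}$ estimates in \eqref{bounds-Wst} and \eqref{decay-Xst} then follow from Corollary \ref{cor-Ejosc} in the regime $j>|\alpha|$ (yielding $\|E^{osc,2}_\pm\|,\|\nabla_x E^{osc,3}_\pm\|\lesssim\epsilon\langle t\rangle^{-3/2}$), the bound $\|Q^{tr}\|_{L^\infty}+\|Q^{tr,1}\|_{L^\infty}\lesssim\epsilon\langle\tau\rangle^{-3}$ derived exactly as in \eqref{bdQtr}, and the quadratic estimate $|V^Q_{\tau,t}|\lesssim\langle v\rangle^{-2}\epsilon^2\langle\tau\rangle^{-3}$ coming from \eqref{decay-Vst}. \textbf{The main obstacle} is the precise bookkeeping of the second Fourier IBP --- matching the constants and signs in \eqref{def-Qtr1} --- together with the regime-splitting ($t-s\gtrsim\langle t\rangle$ versus $t-s\ll\langle t\rangle$, in which case $\langle s\rangle\sim\langle t\rangle$) needed to convert the natural $(t-s)^{-1}\langle s\rangle^{-k}$ bounds produced by the integration by parts into the stated $\langle s\rangle^{-j}\langle t\rangle^{-1}$ decay.
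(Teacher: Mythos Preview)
Your proposal is correct and follows essentially the same approach as the paper: Taylor expand $\hV_{\tau,t}$ about $v$, average in $\tau$, apply Proposition~\ref{prop-charV} to the linear part, perform a second Fourier integration by parts on $\int_s^t V^{osc}_{\tau,t}\,d\tau$ exactly as in \eqref{int-Eosc} (producing the $E^{osc,2}_\pm$ boundary terms and the $Q^{tr,1}$ bulk), Fubini the double integral of $Q^{tr}$, and finally Taylor expand the inverse relativistic map $\hPsi\mapsto\Psi$. The paper presents these steps in a slightly different order (Taylor expansion of $\hV$ first, then the computation of $\frac{1}{t-s}\int_s^t V_{\tau,t}\,d\tau$), but the content is identical, and your remark about the regime-splitting $t-s\gtrsim\langle t\rangle$ versus $\langle s\rangle\sim\langle t\rangle$ makes explicit what the paper leaves implicit when it says the bounds in \eqref{bounds-Wst} ``follow at once''.
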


\begin{proof} We first prove the expansion for $\hPsi_{s,t}(x,v)$. Note that $\hv  = v/\langle v\rangle$ is a regular and bounded function of $v$. In particular, $\nabla_v \hv  = \frac{1}{\langle v\rangle} (\mathbb{I} - \hv  \otimes \hv )$ and $|\nabla^2_v \hv|\lesssim \langle v\rangle^{-2}$. 
Hence, in view of \eqref{decomp-Vst}, we may write 
\begin{equation}\label{def-VQ}
\begin{aligned}
\hV_{s,t} (x,v)&=  \hv +\nabla_v \hv(V_{s,t} (x,v) - v) + V^Q_{\tau,t}(x,v)
\end{aligned}
\end{equation}
with a quadratic remainder $|V^Q_{\tau,t} (x,v)|\lesssim \langle v\rangle^{-2}|V_{s,t}-v|^2$. Therefore, by definition \eqref{def-hatPsi}, we compute 
$$
\begin{aligned}
\hPsi_{s,t}(x,v) &=  \hv 
+ \frac{1}{t-s}\int_s^t (\hV_{\tau,t}(x,v) - \hv ) \; d\tau
\\
&=\hv 
+ \frac{\nabla_v \hv }{t-s}\int_s^t (V_{\tau,t}(x,v) - v ) \; d\tau 
+ \frac{1}{t-s}\int_s^t V^Q_{\tau,t}(x,v) \; d\tau.
\end{aligned}
$$
Using Proposition \ref{prop-charV}, we compute the average of $V_{\tau,t}(x,v)$, 
$$
 \begin{aligned}
\frac{1}{t-s}\int_s^t V_{\tau,t}(x,v) &=
v -V^{osc}_{t,t}(x,v) + \frac{1}{t-s}\int_s^t [V^{osc}_{\tau,t}(x,v)
+ V^{tr}_{\tau,t}(x,v) ]\; d\tau
 \end{aligned}
$$
in which by construction, $  V^{osc}_{s,t} = \sum_\pm E^{osc,1}_{\pm} (s,X_{s,t},V_{s,t}) $. Therefore, similarly to what was done in the proof of Proposition \ref{prop-charV}, we have  
$$
\begin{aligned} \int_s^t V^{osc}_{\tau,t}(x,v) \;d\tau &= \int_s^t  \int \frac{e^{\lambda_\pm(k) \tau + ik\cdot X_{\tau,t}}\FB(\tau,k)}{\lambda_\pm(k) + ik \cdot \hV_{\tau,t}}\; dkd\tau
\\
& = \int \int_s^t \frac{d}{d\tau} (e^{\lambda_\pm(k) \tau + ik\cdot X_{\tau,t}})  \frac{\FB(\tau,k)}{(\lambda_\pm(k) + ik \cdot \hV_{\tau,t})^2} \; d\tau dk 
\\
& = 
\int \Big[\frac{e^{\lambda_\pm(k) t + ik\cdot x}  \FB(t,k)}{(\lambda_\pm(k) + ik \cdot \hv)^2} -  \frac{e^{\lambda_\pm(k) s + ik\cdot X_{s,t}}\FB(s,k)}{(\lambda_\pm(k) + ik \cdot \hV_{s,t})^2} \Big] \; dk\\
& \quad - \int_s^t \int e^{\lambda_\pm(k) \tau + ik\cdot X_{\tau,t}}  \Big[\frac{ \partial_\tau \FB(\tau,k)}{(\lambda_\pm(k) + ik \cdot \hV_{\tau,t})^2} - \frac{2 \FB(\tau,k) ik \cdot \partial_\tau \hV_{\tau,t}}{(\lambda_\pm(k) + ik \cdot \hV_{\tau,t})^3}\Big] \; dkd\tau .
\end{aligned}$$
The first two integrals equal to $E^{osc,2}_{\pm}(t,x,v) $ and $E^{osc,2}_{\pm} (s,X_{s,t}(x,v),V_{s,t}(x,v)) $, respectively, while the last integral is 
$$
\begin{aligned}  - \int_s^t  
Q^{tr,1}(\tau,X_{\tau,t}(x,v), V_{\tau,t}(x,v)) \; d\tau
\end{aligned}$$
having defined $Q^{tr,1}(t,x,v)$ as in \eqref{def-Qtr1}. In addition, by definition, we note 
$$
\begin{aligned}
\int_s^t V^{tr}_{\tau,t}\; d\tau
&= - \int_s^t\int_\tau^t  
Q^{tr}(\tau') \; d\tau' d\tau 
 = - \int_s^t(\tau-s)  
Q^{tr}(\tau) \; d\tau .
\end{aligned}$$
This gives the expansion for $\hPsi_{s,t}(x,v)$. The estimates on $Q^{tr}$ and $Q^{tr,1}$ follow from the bootstrap assumptions on $E$, which directly gives \eqref{bounds-Wst}.

Finally, we prove the expansion for $\Psi_{s,t}(x,v)$ from that of $\hPsi_{s,t}(x,v)$. Indeed, note that $1 - |\hv|^2 = \langle v\rangle^{-2}$, and observe that 
$$\hv \cdot \nabla_v \hv  = \frac{1}{\langle v\rangle} \hv \cdot (\mathbb{I} - \hv  \otimes \hv ) =  \frac{1}{\langle v\rangle} (1-|\hv|^2)\hv = \langle v\rangle^{-3} \hv.$$  
In addition, $V^Q_{s,t}(x,v)$ is of order $\cO(\epsilon^2\langle v\rangle^{-2})$. Therefore, 
$$
\begin{aligned}
 1 - |\hPsi_{s,t}(x,v)|^2 &= 1 - |\hv |^2 - 2 \hv \cdot (\hPsi_{s,t} - \hv) 
- |\hPsi_{s,t} - \hv|^2 \\ &= 1 - |\hv |^2 + \cO(\epsilon \langle v\rangle^{-2})
 \\ &= (1 - |\hv |^2) (1+ \cO(\epsilon)).
  \end{aligned}$$
In particular,  $1 - |\hPsi_{s,t}(x,v)|^2$ remains strictly positive for all $s,t,x,v$, and is close to $1 - |\hv |^2$. 
Therefore, $\Psi_{s,t}(x,v)$ is well-defined through the map $\hv \mapsto v = \hv / \sqrt{1-|\hv|^2}$, giving a similar expansion as that of $\hPsi_{s,t}(x,v)$. In particular, noting $\nabla_{\hv} v = \langle v\rangle (\mathbb{I} + v \otimes v)$ and $|\nabla^2_{\hv} v|\lesssim \langle v\rangle^5$, we have 
\begin{equation}\label{def-WR}
\Psi_{s,t} - v  = \nabla_{\hv} v (\hPsi_{s,t} - \hv) + \cO(\langle v\rangle^5|\hPsi_{s,t} - \hv|^2)
\end{equation}
in which the last term is put into the remainder $\Psi^R_{s,t}(x,v)$. This gives the expansion for $\Psi_{s,t}(x,v)$, upon noting that $\nabla_{\hv} v = (\nabla_v \hv)^{-1}$. 

Finally, in view of the definition, $Q^{tr,1}(t)$ satisfies the same estimates as done for $Q^{tr}(t)$ in \eqref{bdQtr}, yielding 
\begin{equation}\label{bdQtr1} 
\| Q^{tr}(t)\|_{L^\infty_{x,v}} + \| Q^{tr,1}(t)\|_{L^\infty_{x,v}} \lesssim \epsilon \langle t\rangle^{-3}.
\end{equation} 
The estimates in \eqref{bounds-Wst} thus follow at once from the definition and the above bounds. This yields the proposition.
\end{proof}


\subsection{Derivative bounds}\label{sec-dxchar}


In this section, we study derivatives of the characteristics. We prove the following proposition. 

\begin{proposition}\label{prop-Dchar} 
Let $X_{s,t}(x,v), V_{s,t}(x,v)$ be the nonlinear characteristic solving \eqref{ode-char}, and let $\hPsi_{s,t}(x,v)$ be defined as in \eqref{def-hatPsi}. Then,  for $0\le s\le t$, there hold 
\begin{equation}\label{Dxv-VWend}
 \| \partial_x \partial_v^\beta V^{osc}_{t,t} \|_{L^\infty_{x,v}}   \lesssim \epsilon \langle t\rangle^{-3/2+\delta_1} ,
\end{equation}
for any $\beta$, and
\begin{equation}\label{bounds-DVWst}
 \begin{aligned}
\| \partial_x (V_{s,t} - v + V^{osc}_{t,t} )\|_{L^\infty_{x,v}} &\lesssim \epsilon \langle s\rangle^{-3/2+\delta_1} ,
\\
 \| \partial_v (V_{s,t} - v + V^{osc}_{t,t} )\|_{L^\infty_{x,v}} &\lesssim \epsilon \langle s\rangle^{-3/2+\delta_1} (t-s) ,
 \\
\|(t-s)\partial_x (\hPsi_{s,t} - \hv + \hV^{osc}_{t,t})\|_{L^\infty_{x,v}} 
&\lesssim  \epsilon \langle s\rangle^{-1+\delta_1},
 \\
\| (t-s)\partial_v (\hPsi_{s,t} - \hv + \hV^{osc}_{t,t})\|_{L^\infty_{x,v}} 
&\lesssim  \epsilon \langle s\rangle^{-1+\delta_1} (t-s) ,
 \end{aligned}
 \end{equation}
 for $\delta_1 = \frac{1}{|\alpha_0|-1}$. 
In particular,
\begin{equation}\label{Jacobian-dxdv}
\begin{aligned}
|\det(\nabla_xX_{s,t}(x,v)) - 1| &\lesssim \epsilon , \qquad |\det(\nabla_vX_{s,t}(x,v))|& \gtrsim \langle v\rangle^{-5} |t-s|^3,
\end{aligned}\end{equation}
uniformly for all $x,v$. 

\end{proposition}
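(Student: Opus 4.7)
The starting point is the decomposition \eqref{decomp-Vst}, which I rewrite as
$$W_{s,t}(x,v) := V_{s,t}(x,v) - v + V^{osc}_{t,t}(x,v) = V^{osc}_{s,t}(x,v) + V^{tr}_{s,t}(x,v).$$
The endpoint bound \eqref{Dxv-VWend} is immediate: by construction $V^{osc}_{t,t}(x,v) = \sum_\pm E^{osc,1}_\pm(t,x,v)$, and Corollary \ref{cor-Ejosc} in the critical case $j=|\alpha|=1$ delivers exactly the stated $\langle t\rangle^{-3/2+\delta_1}$ decay, with the $\delta_1$-loss being the unavoidable price of being at threshold.

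For the derivative bounds \eqref{bounds-DVWst} I plan to set up a coupled integral system for the first variations $(\partial_x X_{s,t}, \partial_v X_{s,t}, \partial_x W_{s,t}, \partial_v W_{s,t})$ and close it by a small-$\epsilon$ Gronwall argument in $s$. Differentiating \eqref{ode-char2} together with $V_{s,t}=v-V^{osc}_{t,t}+W_{s,t}$ yields
\begin{align*}
\partial_x X_{s,t}-\mathbb{I} &= -\!\int_s^t\! \nabla_v\widehat{v}(V_{\tau,t})\,\partial_x V_{\tau,t}\,d\tau, & \partial_v X_{s,t} &= -\!\int_s^t\! \nabla_v\widehat{v}(V_{\tau,t})\,\partial_v V_{\tau,t}\,d\tau,\\
\partial_x V_{s,t} &= -\partial_x V^{osc}_{t,t}+\partial_x W_{s,t}, & \partial_v V_{s,t} &= \mathbb{I}-\partial_v V^{osc}_{t,t}+\partial_v W_{s,t}.
\end{align*}
Given the target bounds and \eqref{Dxv-VWend}, the first line gives $\|\partial_x X_{s,t}\|_\infty\lesssim 1$ uniformly in $s\in[0,t]$ (because $\int_s^t\langle\tau\rangle^{-3/2+\delta_1}d\tau\lesssim 1$ as long as $\delta_1<1/2$), and $\|\partial_v X_{s,t}\|_\infty\lesssim(t-s)$, the factor $(t-s)$ arising entirely from the $\mathbb{I}$ term in $\partial_v V_{\tau,t}$.

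The analytic core is to verify the ansatz on $\partial W$. Differentiating $V^{osc}_{s,t}=\sum_\pm E^{osc,1}_\pm(s,X_{s,t},V_{s,t})$ by chain rule and invoking Corollary \ref{cor-Ejosc} (with $j=1$ and $|\alpha|\in\{0,1\}$) produces directly the desired pointwise decay of $\partial V^{osc}_{s,t}$. For $\partial V^{tr}_{s,t}$, one needs $\|\nabla Q^{tr}(\tau)\|_\infty$, estimated exactly as in \eqref{bdQtr} with one extra derivative: Lemma \ref{lem-PEosc} on the first term of \eqref{def-Qtr}, product rule plus bootstrap assumptions \eqref{bootstrap-decaydaS} on the remaining terms, yielding a bound of order $\epsilon\langle\tau\rangle^{-3+\delta_1}$, which is time-integrable and absorbs into the right-hand side. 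For small $\epsilon$ the Gronwall loop on $(\partial_x W,\partial_v W)$ closes, giving the first two lines of \eqref{bounds-DVWst}. The $\widehat{\Psi}$-bounds (last two lines) then follow by differentiating the averaging formula \eqref{def-hatPsi} together with the Taylor expansion $\widehat{V}_{\tau,t}-\widehat{v}+\widehat{V}^{osc}_{t,t}=\nabla_v\widehat{v}\,W_{\tau,t}+O(\langle v\rangle^{-2}|W_{\tau,t}|^2)$ already used in \eqref{def-VQ}; the averaging absorbs one factor of $(t-s)^{-1}$ from the prefactor in \eqref{def-hatPsi} but leaves an overall multiplier $(t-s)$ on the bound, as stated.

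For the Jacobian estimates \eqref{Jacobian-dxdv}, the straight-line representation \eqref{straightX} gives $\nabla_x X_{s,t}=\mathbb{I}-(t-s)\nabla_x\widehat{\Psi}_{s,t}$ and $\nabla_v X_{s,t}=-(t-s)\nabla_v\widehat{\Psi}_{s,t}$. Using $\nabla_x\widehat{v}=0$ together with the third bound of \eqref{bounds-DVWst} and $(t-s)\|\partial_x\widehat{V}^{osc}_{t,t}\|_\infty\lesssim\epsilon\langle t\rangle^{-1/2+\delta_1}\lesssim\epsilon$ (from \eqref{Dxv-VWend}), one obtains $\|(t-s)\nabla_x\widehat{\Psi}_{s,t}\|_\infty\lesssim\epsilon$ and hence $|\det\nabla_x X_{s,t}-1|\lesssim\epsilon$. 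The same expansion gives $\nabla_v\widehat{\Psi}_{s,t}=\nabla_v\widehat{v}+O(\epsilon)$; the eigenvalue decomposition of $\mathbb{I}-\widehat{v}\otimes\widehat{v}$ (eigenvalues $1,1,\langle v\rangle^{-2}$) yields $\det(\nabla_v\widehat{v})=\langle v\rangle^{-5}$, which on the compact $v$-support of $f_0$ is bounded below and survives the $O(\epsilon)$ perturbation, giving $|\det\nabla_v X_{s,t}|\gtrsim(t-s)^3\langle v\rangle^{-5}$. The main obstacle throughout is the critical-frequency $\delta_1$-loss from Corollary \ref{cor-Ejosc}: the entire scheme closes only because $\delta_1=1/(|\alpha_0|-1)<1/2$ for $|\alpha_0|\ge 4$, so the weakly-decaying oscillatory tail $\langle\tau\rangle^{-3/2+\delta_1}$ remains integrable and the Gronwall iteration does not amplify.
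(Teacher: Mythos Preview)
Your treatment of \eqref{Dxv-VWend}, of the first two lines of \eqref{bounds-DVWst}, and of the Jacobian estimates \eqref{Jacobian-dxdv} is essentially the same as the paper's and is correct. However, there is a genuine gap in your derivation of the $\hPsi$ bounds (the third and fourth lines of \eqref{bounds-DVWst}). By differentiating the raw averaging formula \eqref{def-hatPsi} and feeding in only $\|\partial_x W_{\tau,t}\|_\infty\lesssim\epsilon\langle\tau\rangle^{-3/2+\delta_1}$, you obtain at best
\[
\|(t-s)\partial_x(\hPsi_{s,t}-\hv+\hV^{osc}_{t,t})\|_\infty \;\lesssim\; \int_s^t \epsilon\langle\tau\rangle^{-3/2+\delta_1}\,d\tau \;\lesssim\; \epsilon\langle s\rangle^{-1/2+\delta_1},
\]
which is a full half power short of the claimed $\epsilon\langle s\rangle^{-1+\delta_1}$.

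The missing ingredient is the \emph{second} integration by parts against the oscillatory phase: the contribution $\int_s^t V^{osc}_{\tau,t}\,d\tau$ is not merely $O(\langle s\rangle^{-1/2})$ but, after one more application of the trick \eqref{int-Eosc}, collapses to boundary terms $E^{osc,2}_\pm(t)-E^{osc,2}_\pm(s)$ of size $O(\langle s\rangle^{-3/2})$ plus a remainder $\int_s^t Q^{tr,1}\,d\tau$ of size $O(\langle s\rangle^{-2})$. This is exactly the content of the refined decomposition $\hPsi_{s,t}-\hv+\hV^{osc}_{t,t}=\hPsi^{osc}_{s,t}+\hPsi^{tr}_{s,t}+\Psi^Q_{s,t}$ from Proposition~\ref{prop-charPsi}, and the paper's proof proceeds by differentiating that decomposition term by term (bootstrapping on $\hPsi$ rather than on $W$). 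Without this second oscillatory gain the stated $\langle s\rangle^{-1+\delta_1}$ rate is unreachable by your route, and the sharper rate is precisely what the higher-derivative bootstrap in Proposition~\ref{prop-HDchar} relies on. Your sentence ``the averaging absorbs one factor of $(t-s)^{-1}$\ldots'' conflates the $(t-s)$ bookkeeping with the $\langle s\rangle$-decay; the former is fine, the latter is where the argument falls short.
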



\begin{proof} By recalling that $V^{osc}_{t,t}(x,v) = E^{osc,1}_{\pm}(t,x,v)$, the estimate \eqref{Dxv-VWend} thus follows at once from the results in Corollary \ref{cor-Ejosc} and the bootstrap assumption on $E^{osc}_\pm(t,x)$. 
Next, in order to prove the bounds in \eqref{bounds-DVWst}, we shall first derive similar bounds on $\hPsi_{s,t}$. Indeed, recalling the expansion from Proposition \ref{prop-charPsi}, we compute
\begin{equation}\label{comp-dxPsi}
\begin{aligned}
\partial_x^\alpha \partial_v^\beta [ \hPsi_{s,t}(x,v) - \hv + \hV^{osc}_{t,t}(x,v)]  &= \partial_x^\alpha \partial_v^\beta \hPsi^{osc}_{s,t}(x,v)  +\partial_x^\alpha \partial_v^\beta \hPsi^{tr}_{s,t}(x,v)+\partial_x^\alpha \partial_v^\beta \Psi^Q_{s,t}(x,v) .
\end{aligned}
\end{equation}

\subsubsection*{Bounds on $x$-derivatives.}

We first prove the bounds on the spatial derivatives $\partial_x$. Note that the characteristics are nonlinear, and we shall need to close the estimates for derivatives. To this end, we introduce 
 \begin{equation}\label{def-zetadxV} 
 \begin{aligned}
 \zeta(s,t) &:= \sup_{0\le s\le \tau\le t} (t-\tau) \langle \tau\rangle^{1-\delta_1} \| \langle v\rangle\partial_x [\hPsi_{\tau,t}- \hv + \hV^{osc}_{t,t}] \|_{L^\infty_{x,v}}.
\end{aligned}\end{equation}
We first bound the derivatives of the characteristics $X_{s,t}, V_{s,t}$ in terms of $\zeta(s,t)$. Indeed, using \eqref{straightX}, \eqref{Dxv-VWend}, and the definition of $\zeta(s,t)$ in \eqref{def-zetadxV}, we note that
\begin{equation}\label{bd-dxXst}
\begin{aligned}
|\partial_x X_{s,t}(x,v)| &\le 1+ (t-s)|\partial_x \hPsi_{s,t}(x,v)| 
\lesssim 1+  \zeta(s,t).
  \end{aligned}\end{equation}
In order to bound $\partial_x V$, we use the expansion from Proposition \ref{prop-charV}, namely  
\begin{equation}\label{expVosc1}
\partial_x  V_{s,t}(x,v) = -\partial_x V^{osc}_{t,t}(x,v) + \partial_x V^{osc}_{s,t}(x,v)  +\partial_xV^{tr}_{s,t}(x,v).
\end{equation}
By definition, 
we bound 
$$
\begin{aligned}
|\partial_xV^{osc}_{s,t}(x,v)| &\le \sum_\pm |\partial_x X_{s,t} \partial_x E^{osc,1}_{\pm}(s)|+ \sum_\pm |\partial_x V_{s,t} \partial_v E^{osc,1}_{\pm}(s)| 
\\&\lesssim \epsilon \langle s\rangle^{-3/2+\delta_1} (1+  \zeta(s,t)) + \epsilon \langle s\rangle^{-3/2} \| \partial_x V_{s,t}\|_{L^\infty_{x,v}} 
.  \end{aligned}$$
 As for $\partial_xV^{tr}_{s,t}$, we first note by definition and Corollary \ref{cor-Ejosc} that   
\begin{equation}\label{bddxQtr}
\begin{aligned} 
\|\partial_x Q^{tr}(t)\|_{L^\infty_{x,v}}&\lesssim \|\partial_x[a_\pm(i\partial_x) F(t)\star_x \phi_{\pm,1}]\|_{L^\infty_{x,v}}
+ \|\partial_x(E\cdot  \nabla_x E^{osc,2}_\pm(t)) \|_{L^\infty_{x,v}} + \| \partial_x E^r(t)\|_{L^\infty_x}
\\
&\lesssim \|F(t)\|_{L^\infty_{x,v}} + \|E\|_{L^\infty_x} \|\partial_x E\|_{L^\infty_x}+ \| \partial_x E^r(t)\|_{L^\infty_x}.
\end{aligned} 
\end{equation}
Similar estimates hold for $\partial_v Q^{tr}(t)$, yielding 
\begin{equation}\label{Dxv-Qtr}
\begin{aligned} 
\|\partial_x Q^{tr}(t)\|_{L^\infty_{x,v}} &\lesssim \epsilon \langle t\rangle^{-3 + \delta_{1}}, \qquad \|\partial_v Q^{tr}(t)\|_{L^\infty_{x,v}} \lesssim \epsilon \langle t\rangle^{-3},
\end{aligned} 
\end{equation}
where $\delta_{1} = \frac{1}{|\alpha_0|-1}$. Therefore, we bound 
$$
\begin{aligned}
|\partial_xV^{tr}_{s,t}(x,v)| &\le  \int_s^t  \Big(
|\partial_x X_{\tau,t} \partial_xQ^{tr}(\tau)| + |\partial_xV_{\tau,t} \partial_v Q^{tr}(\tau)| \Big)\; d\tau .
\\
&\lesssim \epsilon\int_s^t  \Big( \langle \tau\rangle^{-3+ \delta_1} (1+  \zeta(\tau,t)) 
+ \langle \tau\rangle^{-3}\|\partial_xV_{\tau,t}\|_{L^\infty_{x,v}} \Big)\; d\tau 
\\
&\lesssim \epsilon\langle s\rangle^{-2+ \delta_{1}} (1+  \zeta(s,t)) 
+ \epsilon \langle s\rangle^{-2}\sup_{s\le \tau\le t}\|\partial_xV_{\tau,t}\|_{L^\infty_{x,v}} ,
  \end{aligned}$$
Putting these into \eqref{expVosc1}, we obtain 
$$
\begin{aligned}
|\partial_xV_{s,t}(x,v)| &\lesssim \epsilon \langle s\rangle^{-3/2+\delta_1} (1+  \zeta(s,t)) + \epsilon \langle s\rangle^{-3/2} \sup_{s\le \tau \le t}\| \partial_x V_{\tau,t}\|_{L^\infty_{x,v}} .
  \end{aligned}$$
Taking the supremum in $x,v,s$, and letting $\epsilon$ be sufficiently small, we arrive at 
\begin{equation}\label{est-dxVst}
\begin{aligned}
\|\partial_xV_{s,t}\|_{L^\infty_{x,v}} &\lesssim \epsilon \langle s\rangle^{-3/2+\delta_1} (1+  \zeta(s,t)).
  \end{aligned}
  \end{equation}

We are now ready to bound the derivatives of $\hPsi_{s,t}$. Precisely, using the expansions in \eqref{comp-dxPsi}, we shall prove that 
\begin{equation}\label{claim-dxVPsi}
|\langle v\rangle\partial_x \hPsi^{osc}_{s,t}(x,v)| +|\langle v\rangle\partial_x \hPsi^{tr}_{s,t}(x,v)|+|\langle v\rangle\partial_x \Psi^Q_{s,t}(x,v)|\lesssim \epsilon (t-s)^{-1} \langle s\rangle^{-1+\delta_1}(1+\zeta(s,t)),
\end{equation}
which would imply that 
$\zeta(s,t) \lesssim \epsilon,$ 
upon taking $\epsilon$ sufficiently small. By definition (see Proposition \ref{prop-charPsi}), together with \eqref{bd-dxXst} and \eqref{est-dxVst}, we bound
\begin{equation}\label{temp-Posc1}
\begin{aligned}
|\partial_x \hPsi^{osc}_{s,t}(x,v)| &\le \Big| \frac{\nabla_v \hv}{t-s}\sum_\pm \Big(\partial_x E^{osc,2}_{\pm}(t) - \partial_x X_{s,t}\cdot \nabla_x E^{osc,2}_{\pm} (s) -  \partial_x V_{s,t}\cdot \nabla_v E^{osc,2}_{\pm}(s) \Big)\Big|
\\&\lesssim \epsilon \langle v\rangle^{-1} (t-s)^{-1}\langle s\rangle^{-3/2} ( 1 + \zeta(s,t) ),
\end{aligned}\end{equation}
which proves the claim \eqref{claim-dxVPsi} for this term. Next, using again the estimates \eqref{bd-dxXst} and \eqref{est-dxVst}, we bound  
$$
\begin{aligned}
(t-s)|\langle v\rangle\partial_x \hPsi^{tr}_{s,t}(x,v)|& \le \int_s^t  
\Big( (\tau-s) (
|\partial_x Q^{tr}| 
+|\partial_v Q^{tr}|) + |\partial_xQ^{tr,1}|  + |\partial_vQ^{tr,1}| \Big) ( 1 + \zeta(\tau,t))\; d\tau.
\end{aligned}
$$
In view of \eqref{def-Qtr} and \eqref{def-Qtr1}, the quadratic terms $Q^{tr}(t)$ and $Q^{tr,1}(t)$ satisfy the same bounds as in \eqref{Dxv-Qtr}. 
Hence, we obtain 
\begin{equation}\label{temp-Posc2}
\begin{aligned}
|(t-s)\langle v\rangle\partial_x \hPsi^{tr}_{s,t}(x,v)|
&\lesssim \int_s^t \langle \tau\rangle^{-2+\delta_1}( 1 + \zeta(\tau,t))\; d\tau 
\\& \lesssim \epsilon \langle s\rangle^{-1+\delta_1} ( 1 + \zeta(s,t)),
\end{aligned}\end{equation}
proving the desired bound \eqref{claim-dxVPsi} for this term. 
On the other hand, recalling \eqref{def-VQ} and using \eqref{bounds-Vst} and \eqref{est-dxVst}, we bound 
$$ \begin{aligned} \langle v\rangle|\partial_x V^Q_{\tau,t}(x,v)| & \lesssim |V_{\tau,t}(x,v)-v| |\partial_x V_{\tau,t}(x,v)| + |V_{\tau,t}(x,v)-v|^2
\\& 
\lesssim \epsilon^2 \langle \tau\rangle^{-3 + \delta_1} (1 + \zeta(\tau,t)) .
\end{aligned}$$
Therefore, 
\begin{equation}\label{temp-Posc3}
\begin{aligned}
|(t-s)\langle v\rangle\partial_x \Psi^Q_{s,t}(x,v)|& \le \int_s^t \langle v\rangle|\partial_x V^Q_{\tau,t}(x,v)| \; d\tau 
\\& 
\lesssim \epsilon^2 \langle s\rangle^{-2+\delta_1} (1 + \zeta(s,t))
\end{aligned}
\end{equation}
giving the desired bound \eqref{claim-dxVPsi} on $\partial_x \Psi^Q_{s,t}(x,v)$.  This proves the claim \eqref{claim-dxVPsi} and therefore $\zeta(s,t) \lesssim \epsilon$ for all $0\le s\le t$. 
{ In particular, putting \eqref{temp-Posc1}, \eqref{temp-Posc2}, and \eqref{temp-Posc3} into the expansion \eqref{comp-dxPsi}, we obtain 
\begin{equation}\label{est-dxVosc}
\begin{aligned}
\| \langle v\rangle \partial^\alpha_x( \hPsi_{s,t}(x,v) - \hv + \hV^{osc}_{t,t}(x,v))\|_{L^\infty_{x,v}} 
\lesssim  \epsilon \langle s\rangle^{-3/2} (t-s)^{-1}+ \epsilon \langle s\rangle^{-1+\delta_1} (t-s)^{-1},
\end{aligned}
\end{equation}
for $|\alpha|\le 1$. Note that the bounds for $\alpha=0$ follow from the results obtained in Proposition \ref{prop-charPsi}. 

We now prove the first two estimates in \eqref{bounds-DVWst}. Using \eqref{def-WR}, we have 
$$
\partial_x (\Psi_{s,t} - v  +  V^{osc}_{t,t}) = \nabla_{\hv} v \partial_x (\hPsi_{s,t} - \hv +  \hV^{osc}_{t,t}) + \cO(\langle v\rangle^5|\hPsi_{s,t} - \hv| |\partial_x (\hPsi_{s,t} - \hv)|).
$$
The desired estimates on the first term follow from \eqref{est-dxVosc} and the fact that $|\nabla_{\hv} v| \le \langle v\rangle^3$. On the other hand, using again \eqref{est-dxVosc} and \eqref{Dxv-VWend}, we bound 
$$\| \langle v\rangle^2|\hPsi_{s,t} - \hv| |\partial_x (\hPsi_{s,t} - \hv)|\|_{L^\infty_{x,t}} \lesssim \Big[ \epsilon \langle s\rangle^{-3/2} (t-s)^{-1}+ \epsilon \langle s\rangle^{-1+\delta_1} \langle t\rangle^{-1}+ \epsilon \langle t\rangle^{-3/2}\Big]^2.$$
This proves the second estimate stated in \eqref{bounds-DVWst}. 

}

\subsubsection*{Bounds on $v$-derivatives.}

Next, we give bounds on the $v$-derivatives. Similarly as done above, we first introduce 
 \begin{equation}\label{def-zetadvV} 
 \begin{aligned}
 \zeta_1(s,t) &:= \sup_{0\le s\le \tau\le t} \langle \tau\rangle \| \langle v\rangle\partial_v [\hPsi_{\tau,t}- \hv + \hV^{osc}_{t,t}] \|_{L^\infty_{x,v}},
\end{aligned}\end{equation}
and proceed to bound the last two terms on the right of  \eqref{comp-dxPsi}. Using \eqref{straightX}, \eqref{Dxv-VWend}, and the definition of $\zeta_1(s,t)$, we note that
\begin{equation}\label{bd-dvXst}
\begin{aligned}
 |\partial_vX_{s,t}(x,v)| &\le (t-s)|\partial_v \hPsi_{s,t}(x,v)| 
 \lesssim (t-s)(1 + \zeta_1(s,t)).
\\
| \partial_v V_{s,t} (x,v)| &\lesssim 1 + \epsilon \langle s\rangle^{-1/2}  (t-s)(1 + \zeta_1(s,t)). 
  \end{aligned}\end{equation}
Therefore, by definition, together with \eqref{bd-dvXst}, we bound
$$
\begin{aligned}
|\partial_v \hPsi^{osc}_{s,t}(x,v)| &\le 
\frac{|\partial_v\nabla_v \hv|}{t-s}\sum_\pm \Big| E^{osc,2}_{\pm}(t,x,v) - E^{osc,2}_{\pm} (s,X_{s,t}(x,v),V_{s,t}(x,v)) \Big|
\\&\quad +\frac{|\nabla_v \hv|}{t-s}\sum_\pm \Big| \partial_v E^{osc,2}_{\pm}(t) - \partial_v X_{s,t}\cdot \nabla_x E^{osc,2}_{\pm} (s) -  \partial_v V_{s,t}\cdot \nabla_v E^{osc,2}_{\pm}(s) \Big|
\\&\lesssim  \epsilon \langle v\rangle^{-1}\langle s\rangle^{-3/2} ( 1 + \zeta_1(s,t)),
\end{aligned}$$
in which we note that there is no singularity at $s=t$, at which the right hand side vanishes.  This proves the desired bounds for this term. Finally, using \eqref{bd-dvXst} and the fact that $|t-\tau|\le |t-s|$ for $0\le s\le \tau \le t$, we compute 
$$
\begin{aligned}
|\langle v\rangle\partial_v \hPsi^{tr}_{s,t}(x,v)|& \le \int_s^t  
\Big( (\tau-s) (
|\partial_x Q^{tr}| 
+|\partial_v Q^{tr}|) + |\partial_xQ^{tr,1}|  + |\partial_vQ^{tr,1}| \Big) ( 1 + \zeta_1(\tau,t))\; d\tau 
\\&\quad + \frac{\langle v\rangle|\partial_v\nabla_v\hv |}{t-s} \int_s^t  
[ (\tau-s)  
|Q^{tr}| + |Q^{tr,1}|] \; d\tau  .
\end{aligned}
$$
The first integral is bounded by $C_0\epsilon \langle s\rangle^{-1} ( 1 + \zeta_1(s,t))$ as done above, while the second integral is bounded by $C_0\epsilon \langle s\rangle^{-2}$. 
On the other hand, recalling \eqref{def-VQ} and using \eqref{bd-dvXst}, we bound 
$$
\begin{aligned} 
\langle v\rangle|\partial_v V^Q_{\tau,t}(x,v)| & \lesssim |V_{\tau,t}(x,v)-v| |\partial_v V_{\tau,t}(x,v)| + |V_{\tau,t}(x,v)-v|^2
\\
&  \lesssim  \epsilon \langle \tau\rangle^{-3/2}  + \epsilon^2 \langle \tau\rangle^{-2} (t-\tau) (1 + \zeta_1(\tau,t)) ,
\end{aligned}
$$
which gives 
$$ |\langle v\rangle\partial_v \Psi^Q_{s,t}(x,v)| \lesssim  \frac{1}{t-s} \int_s^t \langle v\rangle|\partial_v V^Q_{\tau,t}(x,v)| \; d\tau \lesssim  \epsilon \langle s\rangle^{-1} (1 + \zeta_1(s,t)) .$$
{
This completes the proof of the bounds on $\partial_v \hPsi_{s,t}(x,v)$, namely  
\begin{equation}\label{est-dvVosc}
\| \langle v\rangle\partial_v (\hPsi_{s,t} - \hv + \hV^{osc}_{t,t})\|_{L^\infty_{x,v}} 
\lesssim  \epsilon \langle s\rangle^{-1} ,\end{equation}
upon taking $\epsilon$ sufficiently small. To prove the last estimate in \eqref{bounds-DVWst}, we use \eqref{def-WR} and write
$$
\begin{aligned}
\partial_v (\Psi_{s,t} - v  +  V^{osc}_{t,t}) &=  (\partial_v\nabla_{\hv} v) (\hPsi_{s,t} - \hv +  \hV^{osc}_{t,t})  + \nabla_{\hv} v \partial_v (\hPsi_{s,t} - \hv +  \hV^{osc}_{t,t}) 
\\&\quad  + \cO(\langle v\rangle^4|\hPsi_{s,t} - \hv|^2) + \cO(\langle v\rangle^5|\hPsi_{s,t} - \hv| |\partial_v (\hPsi_{s,t} - \hv)|).
\end{aligned}
$$
Using \eqref{est-dxVosc} and \eqref{est-dvVosc}, we thus obtain the last estimate stated in \eqref{bounds-DVWst}. Finally, as for the third estimate on $\partial_v V_{s,t}$ stated in \eqref{bounds-DVWst}, we simply take the derivative of \eqref{decomp-Vst} and using \eqref{bd-dvXst}, with $\zeta_1(s,t) \lesssim \epsilon$. The bounds \eqref{bounds-DVWst} thus follow. 
}

Finally, the estimates \eqref{Jacobian-dxdv} follow directly from \eqref{straightX} and the bounds \eqref{Dxv-VWend}-\eqref{bounds-DVWst}. This completes the proof of the proposition. 
\end{proof}


\subsection{Decay for higher derivatives}\label{sec-Hdxchar}


In this section, we establish the decay and boundedness of higher derivatives of the characteristics. We prove the following proposition. 

\begin{proposition}\label{prop-HDchar} 
Fix $|\alpha_0| \ge 4$. Let $X_{s,t}(x,v), V_{s,t}(x,v)$ be the nonlinear characteristic solving \eqref{ode-char}. Then, for $0\le s\le t$ and for $1\le |\alpha|\le |\alpha_0|-1$, there hold 
\begin{equation}\label{decayXV}
 \begin{aligned}
\| \partial^\alpha_x(V_{s,t}-v)\|_{L^\infty_{x,v}}  
&\lesssim \epsilon \langle s\rangle^{-\frac32 (1- \delta_{\alpha})} \langle t\rangle^{\delta_\alpha}, 
\\
\| \partial^\alpha_x(X_{s,t}-x)\|_{L^\infty_{x,v}}  
&\lesssim \epsilon \langle s\rangle^{-\frac12 (1- \delta_{\alpha})} \langle t\rangle^{\delta_\alpha},
 \end{aligned}
 \end{equation}
with $\delta_\alpha = \frac{|\alpha|}{|\alpha_0|-1}$. 
\end{proposition}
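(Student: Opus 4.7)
The plan is to proceed by induction on $|\alpha|$, with the base case $|\alpha|=1$ following from Proposition \ref{prop-Dchar} combined with \eqref{Dxv-VWend}. At each inductive step, I would differentiate the integral equation \eqref{ode-char2} for $V_{s,t}$ in $x$ and apply the Fa\`a di Bruno formula to the composition $E(\tau, X_{\tau,t})$, yielding
\begin{equation*}
\partial_x^\alpha V_{s,t} = -\sum_\pi c_\pi \int_s^t (\partial_x^{|\pi|} E)(\tau, X_{\tau,t})\prod_{B\in\pi}\partial_x^{B}X_{\tau,t}\,d\tau,
\end{equation*}
where $\pi$ ranges over partitions of the multi-index $\alpha$. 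By the induction hypothesis, every factor $\partial_x^B X_{\tau,t}$ with $|B|<|\alpha|$ is controlled; the genuinely new unknown is the top-derivative factor $\partial_x^\alpha X_{\tau,t}$ appearing in the singleton-partition term $(\partial_x E)(\tau, X_{\tau,t})\cdot \partial_x^\alpha X_{\tau,t}$, which couples $\partial_x^\alpha V$ to $\partial_x^\alpha X$ and must be closed by a Gronwall argument analogous to the $\zeta(s,t)$ bookkeeping of Proposition \ref{prop-Dchar}.

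For each oscillatory contribution $E^{osc}_\pm$, I would repeat the integration-by-parts of \eqref{int-Eosc}: writing $E^{osc}_\pm = e^{\lambda_\pm\tau}\FB_\pm$ and integrating $\frac{d}{d\tau}e^{\lambda_\pm\tau + ik\cdot X_{\tau,t}}$ by parts produces endpoint terms of the schematic form $(\partial_x^{|\pi|} E^{osc,1}_\pm)(\tau, X_{\tau,t}, V_{\tau,t})\prod_B\partial_x^B X_{\tau,t}$ at $\tau=s,t$, bounded by Corollary \ref{cor-Ejosc} and the interpolation bound \eqref{bootstrap-decaydxE} at rate $\epsilon\langle\tau\rangle^{-\frac32(1-\delta_\alpha)+\epsilon_\alpha}$, together with interior terms involving $\partial_\tau\FB_\pm$, which by \eqref{recall-FB} is proportional to the quadratic source $F=\nabla S$. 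The latter decays at the integrable rate $\langle\tau\rangle^{-3+\delta_{|\pi|+1}}$ by the bootstrap on $S$, and the $E^r$ contribution is similarly integrable by \eqref{bootstrap-decaydaS}. Summing gives the claimed $\langle s\rangle^{-\frac32(1-\delta_\alpha)}\langle t\rangle^{\delta_\alpha}$ bound for $\partial_x^\alpha V_{s,t}$, the $\langle t\rangle^{\delta_\alpha}$ factor absorbing the aggregate interpolation losses $\epsilon_\alpha$ since $\epsilon_\alpha\le\delta_\alpha$.

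For $\partial_x^\alpha X_{s,t}$, I would use the straight-line decomposition $X_{s,t}-x=-(t-s)\hPsi_{s,t}$ from \eqref{straightX} together with the expansion in Proposition \ref{prop-charPsi}. The dominant contribution to $\partial_x^\alpha(X_{s,t}-x)$ is $(t-s)\partial_x^\alpha\hV^{osc}_{t,t}(x,v)$, controlled by $(t-s)\epsilon\langle t\rangle^{-\frac32(1-\delta_\alpha)}$ via Corollary \ref{cor-Ejosc}. An elementary case split ($t\le 2s$ vs.\ $t\ge 2s$) yields $(t-s)\langle t\rangle^{-\frac32(1-\delta_\alpha)}\lesssim \langle s\rangle^{-\frac12(1-\delta_\alpha)}\langle t\rangle^{\delta_\alpha}$, matching the claim. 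The remaining contributions from $\partial_x^\alpha\hPsi^{osc}_{s,t}$, $\partial_x^\alpha\hPsi^{tr}_{s,t}$, and $\partial_x^\alpha\Psi^Q_{s,t}$ are strictly subleading by the estimates of Proposition \ref{prop-charPsi} combined with the inductive hypothesis on lower-order derivatives.

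The principal obstacle is the cascading bookkeeping at the top of the induction: at each step one must verify that the interpolation loss $\epsilon_\alpha=|\alpha|/(|\alpha_0|-1)^2$ fits within the budget $\delta_\alpha-\delta_{|\alpha|-1}=1/(|\alpha_0|-1)$, and that the Gronwall coupling between $\partial_x^\alpha V$ and $\partial_x^\alpha X$ (arising from both the $(\partial E)\cdot\partial_x^\alpha X$ term and the chain-rule expansion of $\partial_x^\alpha\hV_{\tau,t}$) closes with an absolute constant. Both requirements are met when $|\alpha_0|\ge 8$, as in the main theorem.
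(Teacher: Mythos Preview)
Your proposal is essentially correct and follows the paper's strategy: Fa\`a di Bruno on the characteristics, exploitation of the oscillatory structure via the integration-by-parts identity \eqref{int-Eosc}, and a bootstrap closing the top-order coupling between $\partial_x^\alpha V$ and $\partial_x^\alpha X$.

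The paper organizes the argument slightly differently, and it is worth knowing why. Rather than differentiating the raw integral \eqref{ode-char2} and then redoing the integration by parts on each Fa\`a di Bruno term, the paper differentiates the \emph{already-decomposed} expressions from Propositions~\ref{prop-charV} and~\ref{prop-charPsi}, namely
\[
\partial_x^\alpha\bigl(V_{s,t}-v+V^{osc}_{t,t}\bigr)=\partial_x^\alpha V^{osc}_{s,t}+\partial_x^\alpha V^{tr}_{s,t},\qquad
\partial_x^\alpha\bigl(\hPsi_{s,t}-\hv+\hV^{osc}_{t,t}\bigr)=\partial_x^\alpha\hPsi^{osc}_{s,t}+\partial_x^\alpha\hPsi^{tr}_{s,t}+\partial_x^\alpha\Psi^Q_{s,t}.
\]
This avoids a bookkeeping cost that your outline suppresses: if you integrate by parts \emph{after} the Fa\`a di Bruno expansion, the $\tau$-derivative also hits the factors $\prod_\beta(\partial_x^\beta X_{\tau,t})^{k_\beta}$, generating extra interior terms involving $\partial_x^\beta\hV_{\tau,t}$. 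These are harmless (they decay better) but must be tracked. Differentiating the pre-integrated decomposition makes them disappear from the start.

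The second organizational difference is that the paper does not induct on $|\alpha|$; it introduces a single bootstrap quantity $\zeta(s,t)$ summing the appropriately weighted $L^\infty$ norms of $\partial_x^\alpha(V_{s,t}-v+V^{osc}_{t,t})$ and $(t-s)\partial_x^\alpha(\hPsi_{s,t}-\hv+\hV^{osc}_{t,t})$ over \emph{all} $1\le|\alpha|\le|\alpha_0|-1$, and proves $\zeta\le C_0\epsilon+C_0\epsilon\zeta$ directly. This is equivalent to your induction-plus-Gronwall but packages the coupling more compactly. Finally, the proposition closes already for $|\alpha_0|\ge4$ (the paper checks $|\alpha_0|\ge5$ in one place), not only $|\alpha_0|\ge8$; the stronger assumption is needed elsewhere in the paper.
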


We shall prove Proposition \ref{prop-HDchar} by continuous induction. Namely, we introduce the iterative bootstrap function
\begin{equation}\label{lownorm}
 \begin{aligned}
\zeta(s,t) &= \sup_{s\le \tau \le t} \sum_{|\alpha| < |\alpha_0|}\Big[\langle s\rangle^{\frac32 (1- \delta_{\alpha})} \langle t\rangle^{-\delta_\alpha}\| \partial^\alpha_x (V_{s,t}-v+V^{osc}_{t,t})\|_{L^\infty_{x,v}} 
\\&\qquad + \langle s\rangle^{1- \delta_{\alpha}} (t-s) \langle t\rangle^{-\delta_\alpha} \| \partial^\alpha_x(\hPsi_{s,t}-\hv+V^{osc}_{t,t})\|_{L^\infty_{x,v}} \Big]
 \end{aligned}
\end{equation}
with $\delta_\alpha = \frac{|\alpha|}{|\alpha_0|-1}$. Specifically, we shall prove that 
\begin{equation}\label{claimXV1}
\zeta(s,t) \le C_0\epsilon + C_0\epsilon \zeta(s,t)
\end{equation}
for some universal constant $C_0$, which would then yield $\zeta(s,t) \le 2C_0 \epsilon$, upon taking $\epsilon$ sufficiently small. The boundedness and decay of the characteristics then follow from that of $\zeta(s,t)$. 

\subsubsection*{Consequences of the bootstrap function.}
We first recall that $V^{osc}_{t,t}(x,v)= E^{osc,1}_{\pm}(t,x,v)$, and therefore by the bootstrap assumption on $E$ from Section \ref{sec-bootstrap}, 
\begin{equation}\label{bdVosct} \sup_v\| V^{osc}_{t,t}\|_{H^{\alpha_0+1}_{x}} \lesssim \| E^{osc}_\pm(t)\|_{H^{\alpha_0}} \lesssim \epsilon.\end{equation}
This, together with the decay estimate \eqref{decay-Eoscj} and the interpolation inequality \eqref{dx-interpolate}, yields  
\begin{equation}\label{daVosct}
\| \partial^\alpha_x V^{osc}_{t,t}\|_{L^\infty_{x,v}} + \| \partial^\alpha_x E^{osc,1}_{\pm}(t)\|_{L^\infty_{x,v}}  \lesssim \epsilon \langle t\rangle^{-\frac32 (1- \delta_{\alpha})}, 
\end{equation}
which hold for all $|\alpha|\le |\alpha_0|-1$, where $\delta_{\alpha} = \frac{|\alpha|}{|\alpha_0|-1}$. Next, in view of \eqref{daVosct} and the bootstrap funciton \eqref{lownorm}, as long as $\zeta(s,t)$ remains finite, we have
$$
 \begin{aligned}
\| \partial^\alpha_x ( V_{s,t}-v)\|_{L^\infty_{x,v}} &\lesssim \epsilon \langle t\rangle^{-\frac32 (1- \delta_{\alpha})} + \langle s\rangle^{-\frac32 (1- \delta_{\alpha})} \langle t\rangle^{\delta_\alpha}\zeta(s,t)
\\
 \| \partial^\alpha_x(\hPsi_{s,t}-\hv)\|_{L^\infty_{x,v}}   &\lesssim   \epsilon \langle t\rangle^{-\frac32 (1- \delta_{\alpha})} + \epsilon \langle s\rangle^{- (1- \delta_{\alpha})}(t-s)^{-1} \langle t\rangle^{\delta_\alpha}\zeta(s,t). \end{aligned}
$$
Therefore, for $0\le s\le t$, we have
\begin{equation}\label{inductV}
\begin{aligned}
\| \partial^\alpha_x(V_{s,t}-v)\|_{L^\infty_{x,v}}  
&\lesssim \langle s\rangle^{-\frac32 (1- \delta_{\alpha})} \langle t\rangle^{\delta_\alpha}(\epsilon+\zeta(s,t)),
\end{aligned}\end{equation}
for all $ |\alpha|\le |\alpha_0|-1$. Similarly, in view of \eqref{straightX}, \eqref{daVosct}, and \eqref{lownorm}, we have 
\begin{equation}\label{inductX}
\begin{aligned}
\| \partial^\alpha_x(X_{s,t}-x)\|_{L^\infty_{x,v}}  
&\le (t-s) \|\partial_x^\alpha \hPsi_{s,t}\|_{L^\infty_{x,v}}
\\& \lesssim   \epsilon \langle t\rangle^{-\frac12+ \frac32\delta_{\alpha}} + 
 \langle s\rangle^{-(1- \delta_{\alpha})}\langle t\rangle^{\delta_\alpha}\zeta(s,t)
\\& \lesssim   \epsilon \langle t\rangle^{-\frac12 (1- \delta_{\alpha})} \langle t\rangle^{\delta_\alpha}+ 
\langle s\rangle^{- (1- \delta_{\alpha})} \langle t\rangle^{\delta_\alpha}\zeta(s,t)
\\
&\lesssim \langle s\rangle^{-\frac12 (1- \delta_{\alpha})} \langle t\rangle^{\delta_\alpha}(\epsilon+\zeta(s,t)),
\end{aligned}\end{equation}
for all $1\le |\alpha|\le |\alpha_0|-1$. The estimates \eqref{inductV}-\eqref{inductX} yield the desired bounds \eqref{decayXV}, provided that $\zeta(s,t)$ remains finite. 

We now use the expansion from Proposition \ref{prop-charV} and Proposition \ref{prop-charPsi} to bound $\zeta(s,t)$, which we compute
\begin{equation}\label{expPVst}
\begin{aligned}
\partial^\alpha_x  \Big(V_{s,t}(x,v)- v + V^{osc}_{t,t}(x,v)\Big) &=  \partial_x^\alpha V^{osc}_{s,t}(x,v)  +\partial^\alpha_xV^{tr}_{s,t}(x,v),
\\
\partial_x^\alpha \Big(\hPsi_{s,t}(x,v) - \hv + \hV^{osc}_{t,t}(x,v)\Big )& = \partial_x^\alpha \hPsi^{osc}_{s,t}(x,v)  +\partial_x^\alpha \hPsi^{tr}_{s,t}(x,v)+\partial_x^\alpha \Psi^Q_{s,t}(x,v) .
\end{aligned}
\end{equation}
We shall estimate each term in \eqref{expPVst}. To proceed, we first recall 
the Fa\`a di Bruno's formula for composite functions (see, e.g., \cite{BCD}), namely 
\begin{equation}\label{Faa} \partial_x^n f(u) = \sum C_{k_j,n} \partial_x^k f (u) \prod_{j=1}^n (\partial_x^j u)^{k_j}  
\end{equation}
where $k_j \ge 0$, $\sum k_j = k$, and the summation is over all the partitions $\{ k_j\}_{j=1}^n$ of $n$ so that $\sum_j jk_j = n$. The extension to the multidimensional case is similar. 

\subsubsection*{Bounds on $\partial_x^\alpha V^{osc}_{s,t}(x,v)$.}
Recalling $ V^{osc}_{s,t} (x,v)= E^{osc,1}_{\pm}(s,X_{s,t}(x,v),V_{s,t}(x,v))$, we shall prove that 
\begin{equation}\label{est-daVosc}
\begin{aligned}  
\|\partial_x^\alpha V^{osc}_{s,t}\|_{L^\infty_{x,v}} 
&\lesssim 
\epsilon\langle s\rangle^{- \frac32(1-\delta_\alpha)}\langle t\rangle^{\delta_\alpha} (\epsilon+\zeta(s,t)).
\end{aligned}
\end{equation}
Indeed, using \eqref{Faa}, we compute 
\begin{equation}\label{daVosci}
\begin{aligned}  
\partial_x^\alpha V^{osc}_{s,t}= \sum_{1\le |\mu|\le |\alpha|} C_{\beta,\mu}  
\partial_x^\mu E^{osc,1}_{\pm} (s) \prod_{1\le |\beta|\le |\alpha|} (\partial_x^\beta X_{s,t})^{k_\beta} + \mathrm{l.o.t.}
\end{aligned}\end{equation}
where $\sum k_\beta = |\mu|$ and $\sum |\beta|k_\beta = |\alpha|$. Here in the above expression, $\mathrm{l.o.t.}$ denotes the lower order terms, namely all those terms that involve one or more spatial derivatives of $V_{s,t}$. These terms decay at the same or better rate. Therefore, recalling \eqref{daVosct}, we bound 
$$
\begin{aligned}  
\Big \|\sum_{1\le |\mu|\le |\alpha|} C_{\beta,\mu}  
\partial_x^\mu E^{osc,1}_{\pm} (s) \prod_{1\le |\beta|\le |\alpha|} (\partial_x^\beta X_{s,t})^{k_\beta}\Big \|_{L^\infty_{x,v}}
&\lesssim \epsilon \sum_{1\le |\mu|\le |\alpha|}\langle s\rangle^{-\frac32 (1- \delta_{\mu})}  \prod_{1\le |\beta|\le |\alpha|} \|\partial_x^\beta X_{s,t}\|_{L^\infty_{x,v}}^{k_\beta}.
  \end{aligned}$$
Note that $\partial^\beta_xX_{s,t}$ is uniformly bounded for $|\beta|=1$, see Proposition \ref{prop-Dchar}. 
Therefore, using \eqref{inductX}, we bound 
\begin{equation}\label{supprodX} 
\begin{aligned}  
\prod_{2\le |\beta|\le |\alpha|} \|\partial_x^\beta X_{s,t}\|_{L^\infty_{x,v}}^{k_\beta} 
&\lesssim  
 \prod_{2\le |\beta|\le |\alpha|} \langle s\rangle^{-\frac{k_\beta}2 (1-\delta_\beta)}\langle t\rangle^{\delta_\beta k_\beta} (\epsilon+\zeta(s,t))^{k_\beta}
\\
&\lesssim  
\langle s\rangle^{-\frac12\sum^*_\beta k_\beta + \frac12 \sum^*_\beta\delta_\beta k_\beta}\langle t\rangle^{\sum^*_\beta \delta_\beta k_\beta} (\epsilon+\zeta(s,t))^{\sum^*_\beta k_\beta}
\end{aligned}
\end{equation}
in which the summation $\sum_\beta^*$ is taken over $2\le |\beta|\le |\alpha|$ (noting $k_\beta \ge 0$). Now, using the fact that  $\sum_\beta k_\beta = |\mu|$ and $\sum |\beta|k_\beta = |\alpha|$, we note that  
 $  \sum_\beta \delta_{\beta}k_\beta = \delta_{\alpha}$. Therefore,  we have
$$1\le \sum^*_\beta k_\beta \le |\mu|, \qquad \sum^*_\beta \delta_\beta k_\beta \le \delta_\alpha,$$
and hence, 
\begin{equation}\label{supprodX0} 
\begin{aligned}
\prod_{2\le |\beta|\le |\alpha|} \|\partial_x^\beta X_{s,t}\|_{L^\infty_{x,v}}^{k_\beta} 
&\lesssim \langle s\rangle^{-\frac{1}2 (1-\delta_\alpha)}\langle t\rangle^{\delta_\alpha} (\epsilon+\zeta(s,t))
.\end{aligned}
\end{equation}
Putting these estimates into \eqref{daVosci}, we get 
$$
\begin{aligned}  
\|\partial_x^\alpha V^{osc}_{s,t}\|_{L^\infty_{x,v}} 
&\lesssim  
\epsilon \sum_{1\le |\mu|\le |\alpha|}\langle s\rangle^{-\frac32 (1- \delta_{\mu})} \Big[ 1 +  \langle s\rangle^{-\frac{1}2 (1-\delta_\alpha)}\langle t\rangle^{\delta_\alpha} (\epsilon+\zeta(s,t)) \Big]
\\
&\lesssim  \epsilon \langle s\rangle^{-\frac32 (1- \delta_{\alpha})} +  
\epsilon\langle s\rangle^{- 2(1-\delta_\alpha)}\langle t\rangle^{\delta_\alpha} (\epsilon+\zeta(s,t))
\\
&\lesssim 
\epsilon\langle s\rangle^{- \frac32(1-\delta_\alpha)}\langle t\rangle^{\delta_\alpha} (\epsilon+\zeta(s,t))
\end{aligned}
$$
in which we have used that $ \delta_\mu \le \delta_\alpha \le 1$ for $|\mu|\le |\alpha|\le |\alpha_0|-1$. This proves \eqref{est-daVosc}. 

\subsubsection*{Bounds on $\partial_x^\alpha V^{tr}_{s,t}(x,v)$.}

Next, we bound on $ \partial_x^\alpha V^{tr}_{s,t}(x,v)$, namely we shall prove 
\begin{equation}\label{est-daVtr}
\begin{aligned}
\| \partial_x^\alpha V^{tr}_{s,t} \|_{L^\infty_{x,v}} 
\lesssim \epsilon \langle s\rangle^{-2 (1-\delta_{\alpha})}\langle t\rangle^{\delta_\alpha} (\epsilon+\zeta(s,t)).
\end{aligned}
\end{equation}
By definition, we recall 
\begin{equation}\label{recalldaVtr}
\partial_x^\alpha V^{tr}_{s,t}(x,v) = -  \int_s^t  
\partial_x^\alpha[Q^{tr}(\tau,X_{\tau,t}(x,v), V_{\tau,t}(x,v))] \; d\tau.
\end{equation}
As before, since $\partial_x^\beta V_{s,t}$ (and $\partial^\gamma_v Q^{tr}$) satisfy better estimates than those for $\partial_x^\beta X_{s,t}$ (and $\partial^\gamma_x Q^{tr}$, respectively), we shall therefore focus terms that only involve derivatives of $X_{s,t}$. Namely, we bound 
\begin{equation}\label{compItr}
I^{tr,\alpha}_{s,t} = \sum_{1\le |\mu|\le |\alpha|} C_{\beta,\mu}  
\int_s^t \partial_x^\mu Q^{tr}(\tau) \prod_{1\le |\beta|\le |\alpha|} (\partial_x^\beta X_{\tau,t}(x,v))^{k_\beta} \; d\tau.
\end{equation}
By definition \eqref{def-Qtr}, we compute 
\begin{equation}\label{comdaQtr}
\begin{aligned} 
\partial_x^\mu Q^{tr}&=\sum_\pm \partial_x^\mu [a_\pm(i\partial_x) F\star_x \phi_{\pm,1}]
-  \sum_\pm \nabla_v \hv  \partial_x^\mu(E\cdot  \nabla_x E^{osc,2}_\pm) + \partial_x^\mu  E^r.
\end{aligned} 
\end{equation}
Observe that under the boundedness of $V_{s,t}, X_{s,t}$, see \eqref{inductV}-\eqref{inductX}, we have 
$$ \|  a_\pm(i\partial_x) \partial_x^\mu F(t)\star_x \phi_{\pm,1}\|_{L^\infty_{x,v}} \lesssim \| \partial_x^{\mu} S(t)\|_{L^\infty_{x,v}} \lesssim \epsilon \langle t\rangle^{-3 + \delta_\mu}.$$
On the other hand, note that $\|  \nabla_x E^{osc,2}_\pm\|_{L^\infty} \lesssim \| E^{osc}_\pm \|_{L^\infty}$ and $\|  \nabla_x E^{osc,2}_\pm\|_{H^{\alpha_0+1}} \lesssim \| E^{osc}_\pm \|_{H^{\alpha_0}}$. Therefore, 
using the interpolation \eqref{dx-interpolate}, with $1\le |\mu|\le |\alpha_0|-1$, and the inequality 
$$\| uv\|_{H^{\alpha_0}} \lesssim \| u\|_{L^\infty} \| v\|_{H^{\alpha_0}}+ \| v\|_{L^\infty} \| u\|_{H^{\alpha_0}},$$ 
we obtain 
$$
\begin{aligned} 
\| \partial_x^\mu(E\cdot  \nabla_x E^{osc,2}_\pm(t))\|_{L^\infty_{x,v}} 
& \lesssim 
\| E\cdot  \nabla_x E^{osc,2}_\pm(t)\|_{L^\infty_x}^{1-\frac{|\mu|}{|\alpha_0|-1}} \| E\cdot  \nabla_x E^{osc,2}_\pm(t)\|_{H^{\alpha_0+1}_x}^{\frac{|\mu|}{|\alpha_0|-1}}
\\
& \lesssim 
\| E\|_{L^\infty_x}^{2-\frac{|\mu|}{|\alpha_0|-1}} \| E\|_{H^{\alpha_0+1}_x}^{\frac{|\mu|}{|\alpha_0|-1}}
 \lesssim 
 \epsilon^2 \langle t\rangle^{-3 +\frac32\delta_{\mu}+ \epsilon_\mu} 
 \end{aligned} 
$$
with $\epsilon_\mu = \delta_1 \delta_\mu$ and $\delta_\mu = \frac{|\mu|}{|\alpha_0| -1}$, in which we have used the bootstrap assumption $\| E^{osc}_\pm(t)\|_{H^{\alpha_0+1}_x} 
\le \epsilon \langle t\rangle^{\delta_1}$. Therefore, together with \eqref{bootstrap-decaydaS}, we have 
\begin{equation}\label{dmuQtr}
\| \partial_x^\mu Q^{tr}(t)\|_{L^\infty_{x,v}}  \lesssim 
 \epsilon^2 \langle t\rangle^{-3 +\frac32\delta_{\mu}+ \epsilon_\mu} 
\end{equation}
and 
\begin{equation}\label{est-Itrst}
\begin{aligned}
\|I^{tr,\alpha}_{s,t} \|_{L^\infty_{x,v}} 
&\lesssim 
\sum_{1\le |\mu|\le |\alpha|}  
\int_s^t \| \partial_x^\mu Q^{tr}(\tau) \|_{L^\infty_{x,v}}\prod_{1\le |\beta|\le |\alpha|} \| \partial_x^\beta X_{\tau,t}\|_{L^\infty_{x,v}}^{k_\beta} \; d\tau
\\&\lesssim \epsilon\sum_{1\le |\mu|\le |\alpha|}  
\int_s^t  \langle \tau\rangle^{-3 +\frac32\delta_{\mu}+\epsilon_\mu} \prod_{1\le |\beta|\le |\alpha|} \| \partial_x^\beta X_{\tau,t}\|_{L^\infty_{x,v}}^{k_\beta} \; d\tau
.\end{aligned}
\end{equation}
We first treat the case when $k_\beta = 0$ for all $|\beta| \ge 2$ (namely, only involving the first derivatives of $X_{s,t}$). In this case, using $\| \partial_x^\beta X_{\tau,t}\|_{L^\infty_{x,v}}^{k_\beta}\lesssim 1$ for $|\beta|=1$, and noting that $\delta_\mu \le \delta_\alpha$, for each $|\mu|\le |\alpha|$, we bound  
$$
\int_s^t  \langle \tau\rangle^{-3 +\frac32\delta_{\mu}+\epsilon_\mu} \; d\tau
\lesssim \langle s\rangle^{-2 +\frac32\delta_\alpha+\epsilon_\alpha}
\lesssim \langle s\rangle^{-2(1-\delta_\alpha)},
$$
upon noting $\epsilon_\alpha = \delta_1\delta_\alpha \le \frac12 \delta_\alpha$. 
We next treat the case when $k_\beta \ge 1$ for some $|\beta|\ge 2$. Using \eqref{supprodX0}, for each $|\mu|\le |\alpha|$, we bound 
$$
\begin{aligned}
\int_s^t  \langle \tau\rangle^{-3 +\frac32\delta_{\mu}} \prod_{2\le |\beta|\le |\alpha|} \| \partial_x^\beta X_{\tau,t}\|_{L^\infty_{x,v}}^{k_\beta} \; d\tau
&\lesssim \int_s^t  \langle \tau\rangle^{-3 +\frac32\delta_{\mu}+\epsilon_\mu} \langle \tau \rangle^{-\frac{1}2 (1-\delta_\alpha)}\langle t\rangle^{\delta_\alpha} (\epsilon+\zeta(\tau,t)) \; d\tau
\\
&\lesssim \langle s\rangle^{-2 +2\delta_{\alpha}} \langle s\rangle^{-\frac{1}2 (1-\delta_\alpha)}\langle t\rangle^{\delta_\alpha} (\epsilon+\zeta(s,t)) 
\\
&\lesssim \langle s\rangle^{-\frac52(1-\delta_\alpha)}\langle t\rangle^{\delta_\alpha} (\epsilon+\zeta(s,t)) 
.\end{aligned}
$$
This completes the proof of \eqref{est-daVtr}. 

\subsubsection*{Bounds on $\partial_x^\alpha \hPsi^{osc}_{s,t}(x,v)$.}

Next, we prove bounds on $\partial_x^\alpha \hPsi^{osc}_{s,t}(x,v)$, which we recall
\begin{equation}\label{comPosc}
\begin{aligned}
(t-s)\partial_x^\alpha\hPsi^{osc}_{s,t}(x,v) &=  \nabla_v \hv\sum_\pm \partial_x^\alpha\Big(E^{osc,2}_{\pm}(t,x,v) - E^{osc,2}_{\pm} (s,X_{s,t}(x,v),V_{s,t}(x,v)) \Big).
 \end{aligned}\end{equation}
Since $E^{osc,2}(s)$ satisfies the same (or better) estimates than those for $ V^{osc}_{s,t} = E^{osc,1}_{\pm}(s)$, following the proof of \eqref{est-daVosc}, we thus obtain   
\begin{equation}\label{est-daPosc}
\begin{aligned}  
(t-s)\|\partial_x^\alpha\hPsi^{osc}_{s,t}\|_{L^\infty_{x,v}} 
 \lesssim 
 \epsilon\langle s\rangle^{- \frac32(1-\delta_\alpha)}\langle t\rangle^{\delta_\alpha} (\epsilon+\zeta(s,t)),
 \end{aligned}
 \end{equation}
 for $|\alpha|\le |\alpha_0|-1$. 
Note that these estimates are better than what's stated for $\partial_x^\alpha\hPsi_{s,t}(x,v)$. 

\subsubsection*{Bounds on $\partial_x^\alpha \hPsi^{tr}_{s,t}(x,v)$.}

Next, we give bounds on $\partial_x^\alpha\hPsi^{tr}_{s,t}(x,v)$, which we recall 
\begin{equation}\label{recalldaPtr}
\begin{aligned}
(t-s)\partial_x^\alpha\hPsi^{tr}_{s,t}(x,v)& = - \nabla_v \hv\int_s^t  
\partial_x^\alpha[ (\tau-s)  
Q^{tr} + Q^{tr,1}](\tau,X_{\tau,t}(x,v), V_{\tau,t}(x,v)) \; d\tau .
 \end{aligned}\end{equation}
We shall prove 
\begin{equation}\label{est-daPtr}
\begin{aligned}  
(t-s)\|\partial_x^\alpha\hPsi^{tr}_{s,t}\|_{L^\infty_{x,v}} 
 \lesssim 
 \epsilon\langle s\rangle^{- (1-\delta_\alpha)}\langle t\rangle^{\delta_\alpha} (\epsilon+\zeta(s,t)),
 \end{aligned}
 \end{equation}
for $|\alpha|\le |\alpha_0|-1$. Again, as $Q^{tr,1}(s)$ defined as in \eqref{def-Qtr1} is of the same form as that of $Q^{tr}(s)$ defined as in \eqref{def-Qtr}, following the proof of \eqref{est-daVtr} (precisely, starting from \eqref{est-Itrst}), we obtain 
\begin{equation}\label{bdPtr}
\begin{aligned}
\|(t-s)\partial_x^\alpha\hPsi^{tr}_{s,t} \|_{L^\infty_{x,v}} 
&\lesssim \epsilon\sum_{1\le |\mu|\le |\alpha|}  
\int_s^t (\tau-s) \langle \tau\rangle^{-3 +\frac32\delta_{\mu}+\epsilon_\mu} \prod_{1\le |\beta|\le |\alpha|} \| \partial_x^\beta X_{\tau,t}\|_{L^\infty_{x,v}}^{k_\beta} \; d\tau.
\end{aligned}
\end{equation}
In the case when $k_\beta = 0$ for all $|\beta| \ge 2$, we bound 
$$
\int_s^t  (\tau-s)\langle \tau\rangle^{-3 +\frac32\delta_{\mu}+\epsilon_\mu} \; d\tau
\le \langle t\rangle^{\delta_\alpha}\int_s^t  \langle \tau\rangle^{-2 +\delta_{\alpha}} \; d\tau \lesssim \langle t\rangle^{\delta_\alpha}\langle s\rangle^{-(1- \delta_\alpha)
} ,
$$
which verifies \eqref{est-daPtr} for this term. We next treat the case when $k_\beta \ge 1$ for some $|\beta|\ge 2$. Using \eqref{supprodX}, for each $|\mu|\le |\alpha|$, we bound 
$$
\begin{aligned}
\int_s^t  &(\tau-s)\langle \tau\rangle^{-3 +\frac32\delta_{\mu}+\epsilon_\mu} \prod_{2\le |\beta|\le |\alpha|} \| \partial_x^\beta X_{\tau,t}\|_{L^\infty_{x,v}}^{k_\beta} \; d\tau
\\
& \lesssim 
\int_s^t  \langle \tau\rangle^{-2 +\frac32\delta_{\mu}+\epsilon_\mu} \langle \tau\rangle^{-\frac12 \sum^*_\beta k_\beta+\frac{1}2 \sum^*_\beta\delta_\beta k_\beta}\langle t\rangle^{\sum^*_\beta\delta_\beta k_\beta} (\epsilon+\zeta(\tau,t))\; d\tau
\\
& \lesssim 
 \langle s\rangle^{-1 +\frac32\delta_{\mu}+\epsilon_\mu} \langle s\rangle^{-\frac12 \sum^*_\beta k_\beta+\frac12 \sum^*_\beta\delta_\beta k_\beta}\langle t\rangle^{\sum^*_\beta\delta_\beta k_\beta} (\epsilon+\zeta(s,t))
, \end{aligned}
$$
provided that the exponent of $\langle s\rangle$ remains strictly negative. 
Indeed, we shall check that 
\begin{equation}\label{bda} 
I_{\mu,\alpha} :=
 \langle s\rangle^{-1 +\frac32\delta_{\mu}+\epsilon_\mu} \langle s\rangle^{-\frac12 \sum^*_\beta k_\beta+\frac12 \sum^*_\beta\delta_\beta k_\beta}\langle t\rangle^{\sum^*_\beta\delta_\beta k_\beta}  \le  \langle s\rangle^{- (1-\delta_\alpha)}\langle t\rangle^{\delta_\alpha} 
.\end{equation}
Let $k_1$ be the number of $\partial_x X_{s,t}$ appearing in \eqref{bdPtr}. It then follows that 
$$|\mu| = \sum_\beta k_\beta = k_1 + \sum^*_\beta k_\beta, \qquad |\alpha| = \sum |\beta|k_\beta = k_1 + \sum^*_\beta |\beta|k_\beta,$$
with $\sum^*_\beta k_\beta\ge 1$ (and hence, $k_1 \le |\mu| - 1$). Since $\delta_\alpha$ is linear in $|\alpha|$, we get  
$$\delta_\mu= \delta_{k_1} + \sum^*_\beta \delta_{k_\beta}, \qquad \delta_\alpha= \delta_{k_1} + \sum^*_\beta \delta_\beta k_\beta.$$ 
Therefore, we bound  
$$
\begin{aligned}
 \langle s\rangle^{-\frac12 \sum^*_\beta k_\beta+\frac12 \sum^*_\beta\delta_\beta k_\beta}\langle t\rangle^{\sum^*_\beta\delta_\beta k_\beta}  
&=  \langle s\rangle^{-\frac{1}2(|\mu| - k_1) +\frac12(\delta_\alpha - \delta_{k_1})}\langle t\rangle^{\delta_\alpha - \delta_{k_1}} 
\\& = \langle s\rangle^{-\frac{1}2(|\mu| - k_1-1)} \langle s\rangle^{- \frac12 \delta_{k_1}}  \langle t\rangle^{- \delta_{k_1}}  \langle s\rangle^{-\frac{1}2(1-\delta_\alpha)}\langle t\rangle^{\delta_\alpha} 
\\& \le \langle s\rangle^{-\frac{1}2(|\mu| - k_1-1)} \langle s\rangle^{- \frac32 \delta_{k_1}}  \langle s\rangle^{-\frac{1}2(1-\delta_\alpha)}\langle t\rangle^{\delta_\alpha} ,
\end{aligned}$$
and since $\delta_\mu \le \delta_\alpha$, we bound $\langle s\rangle^{-1 +\frac32\delta_{\mu}+\epsilon_\mu} \le  \langle s\rangle^{-\frac12 +\delta_{\mu}+\epsilon_\mu} \langle s\rangle^{-\frac12(1 -\delta_\alpha)} $.
Therefore, \eqref{bda} would follow, provided that 
$$  \langle s\rangle^{-\frac{1}2(|\mu| - k_1-1)} \langle s\rangle^{- \frac32 \delta_{k_1}} \langle s\rangle^{-\frac12 +\delta_{\mu}+\epsilon_\mu} \le 
1$$
for $k_1\ge 0$ and $|\mu|\ge k_1 +1$. Since $\delta_{\mu}\le 1$, the inequality holds if $|\mu| \ge k_1 + 3$. In the case when $|\mu| = k_1 + 2$, we note that $\delta_\mu = \delta_{k_1} + 2\delta_1$, and so the inequality holds, since $2\delta_1 + \epsilon_\mu \le 3\delta_1 \le 1$, provided $|\alpha_0|\ge 4$.
Finally, in the remaining case when $|\mu| = k_1+1$, we note that $\delta_{\mu} = \delta_{k_1} + \delta_{1}$ and so the inequality again holds, since $\delta_1 + \epsilon_\mu \le 2 \delta_1 \le \frac12$, provided that $|\alpha_0|\ge 5$. This completes the proof of \eqref{est-daPtr}. 

\subsubsection*{Bounds on $\partial_x^\alpha \Psi^Q_{s,t}(x,v)$.}
Finally, we bound $\partial_x^\alpha \Psi^Q_{s,t}(x,v)$, which are computed by 
$$
\begin{aligned}
(t-s)\partial_x^\alpha \Psi^Q_{s,t}(x,v) &= \int_s^t \partial_x^\alpha V_{\tau,t}^Q(x,v)\; d\tau
\end{aligned}$$
where $V^Q_{s,t} (x,v) = \mathcal{O}(|V_{s,t}-v|^2)$. Using the Leibniz's formula for derivatives and the inductive bounds \eqref{inductV}, we compute 
$$
\begin{aligned}
|(t-s)\partial_x^\alpha \Psi^Q_{s,t}(x,v)| 
& \lesssim \sum_{|\beta|+|\mu| \le |\alpha|} \int_s^t |\partial_x^\beta (V_{s,t}-v)||\partial_x^\mu (V_{s,t}-v)| \; d\tau
\\
& \lesssim \epsilon \sum_{|\beta|+|\mu| \le |\alpha|} \int_s^t \langle \tau\rangle^{-\frac32 (1- \delta_\beta)-\frac32 (1- \delta_\mu)} \langle t\rangle^{\delta_\beta + \delta_\mu}(\epsilon+\zeta(\tau,t))\; d\tau.
\end{aligned}$$
Since $|\beta|+|\mu|\le |\alpha|$, we have $\delta_\beta + \delta_\mu \le \delta_\alpha$. Therefore, integrating in time the above integral yields 
\begin{equation}\label{est-daPQ}
\begin{aligned}
\|(t-s)\partial_x^\alpha \Psi^Q_{s,t}\|_{L^\infty_{x,v}} 
& \lesssim \epsilon \langle s\rangle^{-2 + \frac32 \delta_\alpha} \langle t\rangle^{\delta_\alpha}(\epsilon+\zeta(s,t)).
\end{aligned}
\end{equation}

Finally, combining all the estimates \eqref{est-daVosc}, \eqref{est-daVtr}, \eqref{est-daPosc}, \eqref{est-daPtr}, and \eqref{est-daPQ} into the expansions in \eqref{expPVst}, we complete the proof of the claim \eqref{claimXV1}, and hence the decay and boundedness in $L^\infty_{x,v}$ of the characteristics.


\subsection{Boundedness for top derivatives}\label{sec-Hdxchar}


In this section, we establish the boundedness of the top derivatives of the characteristics. Specifically, we prove the following proposition. 

\begin{proposition}\label{prop-HDBchar} 
Fix $|\alpha_0| \ge 4$. Let $X_{s,t}(x,v), V_{s,t}(x,v)$ be the nonlinear characteristic solving \eqref{ode-char}. Then, for $0\le s\le t$, there hold 
\begin{equation}\label{HsXV}
 \begin{aligned}
\sup_v \| \partial_x^{\alpha_0} X_{s,t}(x,v)\|_{L^2_x}  + \sup_v \| \partial_x^{\alpha_0} V_{s,t}(x,v)\|_{L^2_x}   &\lesssim \epsilon  \langle t\rangle^{\frac32 +\delta_{1}} ,
 \end{aligned}
 \end{equation}
 for $\delta_1 = \frac{1}{|\alpha_0|-1}$. 
\end{proposition}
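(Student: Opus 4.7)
The plan is to run a continuous bootstrap on the single quantity
$$
\mathcal{M}(t) \;:=\; \langle t\rangle^{-3/2-\delta_1}\sup_{0\le s\le t}\sup_{v}\Bigl[\,\sup_v\|\partial_x^{\alpha_0}X_{s,t}(\cdot,v)\|_{L^2_x}+\sup_v\|\partial_x^{\alpha_0}V_{s,t}(\cdot,v)\|_{L^2_x}\,\Bigr],
$$
and show $\mathcal{M}(t)\le C_0\epsilon + C_0\epsilon\,\mathcal{M}(t)$ so that, for $\epsilon$ small, $\mathcal{M}(t)\lesssim \epsilon$. The strategy mirrors Section~\ref{sec-Hdxchar}: differentiate the Lagrangian expansions from Propositions~\ref{prop-charV} and~\ref{prop-charPsi} $\alpha_0$ times, apply the multivariate Fa\`a di Bruno formula \eqref{Faa}, and place the $L^2_x$ norm on the factor of top order while controlling every lower-order factor in $L^\infty_{x,v}$ using Proposition~\ref{prop-Dchar}--\ref{prop-HDchar}.

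For $V_{s,t}$, I expand
$\partial_x^{\alpha_0}V_{s,t}=-\partial_x^{\alpha_0}V^{osc}_{t,t}+\partial_x^{\alpha_0}V^{osc}_{s,t}+\partial_x^{\alpha_0}V^{tr}_{s,t}$. The endpoint piece $\partial_x^{\alpha_0}V^{osc}_{t,t}=\partial_x^{\alpha_0}E^{osc,1}_\pm(t,\cdot,v)$ is controlled in $L^2_x$ by $\epsilon$ via the $H^{\alpha_0}$ bootstrap on $E^{osc}_\pm$ together with the symbol bound $|\widehat{\phi_{\pm,1}}(k)|\lesssim \langle k\rangle^{-1}$. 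For $\partial_x^{\alpha_0}V^{osc}_{s,t}=\partial_x^{\alpha_0}[E^{osc,1}_\pm(s,X_{s,t},V_{s,t})]$, the leading Fa\`a di Bruno term is bounded by $\|\partial_x^{\alpha_0}E^{osc,1}_\pm(s)\|_{L^2_x}\lesssim\epsilon$ after a change of variables (valid since $|\det\nabla_xX_{s,t}-1|\lesssim\epsilon$ by \eqref{Jacobian-dxdv}); the linear-in-$\mathcal{M}$ terms are paired with $\|\nabla_{x,v}E^{osc,1}_\pm\|_{L^\infty}\lesssim\epsilon\langle s\rangle^{-3/2+\delta_1}$ from Corollary~\ref{cor-Ejosc}; and intermediate splittings of the product are bounded exactly as in \eqref{supprodX}--\eqref{supprodX0}. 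The time-integrated piece $\partial_x^{\alpha_0}V^{tr}_{s,t}$ is handled the same way, using $\|\partial_x^{\alpha_0}Q^{tr}(\tau)\|_{L^2_x}\lesssim \|\partial_x^{\alpha_0}S\|_{L^2_x}+\|\partial_x^{\alpha_0}E^r\|_{L^2_x}+\|E\cdot \nabla_xE^{osc,2}_\pm\|_{H^{\alpha_0}}$ (controlled by the bootstrap \eqref{bootstrap-decaydaS}--\eqref{bootstrap-Hs} and a standard Moser product estimate), together with $\|\nabla Q^{tr}\|_{L^\infty}\lesssim\epsilon\langle\tau\rangle^{-3}$ from \eqref{Dxv-Qtr}.

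For $X_{s,t}$, I exploit the identity $\partial_x^{\alpha_0}X_{s,t}=-(t-s)\partial_x^{\alpha_0}\hPsi_{s,t}$ and decompose $\hPsi$ by Proposition~\ref{prop-charPsi}. The leading piece $-(t-s)\partial_x^{\alpha_0}\hV^{osc}_{t,t}$ is bounded in $L^2_x$ by $\epsilon(t-s)\lesssim\epsilon\langle t\rangle$. The piece $(t-s)\partial_x^{\alpha_0}\hPsi^{osc}_{s,t}$ is bounded directly via the formula $(t-s)\hPsi^{osc}_{s,t}=\nabla_v\hv\sum_\pm(E^{osc,2}_\pm(t,x,v)-E^{osc,2}_\pm(s,X,V))$ by $\epsilon+\epsilon\,\mathcal{M}(t)\langle t\rangle^{3/2+\delta_1}$ terms. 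The integrated pieces $(t-s)\partial_x^{\alpha_0}\hPsi^{tr}_{s,t}$ and $(t-s)\partial_x^{\alpha_0}\Psi^Q_{s,t}$ are treated by Fa\`a di Bruno on the integrands of the respective representations in Proposition~\ref{prop-charPsi}, using the weight $(\tau-s)\le(t-s)$ and the fact that $V^Q_{s,t}=\mathcal{O}(|V_{s,t}-v|^2)$ is quadratic in a quantity with pointwise decay $\epsilon\langle s\rangle^{-1/2}$ (from \eqref{quick-bdV}).

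The main obstacle is book-keeping: one must verify that every resulting time integral, after combining the $\langle\tau\rangle^{\delta_1}$ growth at top order with the sharper decay \eqref{bootstrap-decaydaS} of $E^r$ and $S$ at order $\alpha_0-1$ and the interpolation bounds of Proposition~\ref{prop-HDchar}, produces either a constant multiple of $\epsilon\langle t\rangle^{3/2+\delta_1}$ (driving the inhomogeneous term) or a factor $\epsilon\,\mathcal{M}(t)\langle t\rangle^{3/2+\delta_1}$ (absorbable after choosing $\epsilon$ small). Once the differential inequality $\mathcal{M}(t)\le C_0\epsilon+C_0\epsilon\,\mathcal{M}(t)$ is in hand, the standard continuity argument closes the bootstrap and yields the claimed bound \eqref{HsXV}.
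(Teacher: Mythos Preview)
Your proposal is correct and follows essentially the same approach as the paper: bootstrap on the top-order $L^2_x$ norms of $X$ and $V$, decompose via Propositions~\ref{prop-charV}--\ref{prop-charPsi}, apply Fa\`a di Bruno placing the single top-order factor in $L^2_x$ (with the change of variables $x\mapsto X_{s,t}$ justified by \eqref{Jacobian-dxdv}) and all lower-order factors in $L^\infty_{x,v}$ via Proposition~\ref{prop-HDchar}, and absorb. Two small corrections: \eqref{Dxv-Qtr} gives $\|\partial_xQ^{tr}\|_{L^\infty}\lesssim\epsilon\langle\tau\rangle^{-3+\delta_1}$, not $\langle\tau\rangle^{-3}$; and the product estimates \eqref{supprodX}--\eqref{supprodX0} are stated for $|\alpha|\le|\alpha_0|-1$, so at the top order you need their analogue (the paper's \eqref{prodXtop}, giving $\prod_{1\le|\beta|\le|\alpha_0|-1}\|\partial_x^\beta X_{s,t}\|_{L^\infty}^{k_\beta}\lesssim\langle t\rangle^{\delta_{\alpha_0}}=\langle t\rangle^{1+\delta_1}$), which is what actually produces the $\langle t\rangle^{3/2+\delta_1}$ growth through the $\hPsi^{tr}$ time integral.
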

\begin{proof} In this section, we bound the top derivatives of $X_{s,t}$ and $V_{s,t}$. Indeed, for $ |\alpha_0|\ge 4$, using again the expansion \eqref{expPVst}, we compute 
\begin{equation}\label{expXVtopda}
\begin{aligned}
\partial^{\alpha_0}_x V_{s,t}(x,v) &= -\partial_x^{\alpha_0} V^{osc}_{t,t}(x,v)+  \partial_x^{\alpha_0} V^{osc}_{s,t}(x,v)  +\partial^{\alpha_0}_xV^{tr}_{s,t}(x,v),
\\
\partial_x^{\alpha_0} \hPsi_{s,t}(x,v)& = -\partial_x^{\alpha_0} \hV^{osc}_{t,t}(x,v)+ \partial_x^{\alpha_0} \hPsi^{osc}_{s,t}(x,v)  +\partial_x^{\alpha_0} \hPsi^{tr}_{s,t}(x,v)+\partial_x^{\alpha_0} \Psi^Q_{s,t}(x,v) .
\end{aligned}
\end{equation}
The first term on the right hand side is already treated in \eqref{bdVosct}, namely 
\begin{equation}\label{L2daVosct}
\| \partial^{\alpha_0}_x V^{osc}_{t,t}\|_{L^2_{x}} \lesssim \| \partial^{\alpha_0}_x E^{osc,1}_{\pm}(t)\|_{L^2_{x}} \lesssim \|E^{osc}_{\pm}(t)\|_{H^{\alpha_0-1}_{x}}  \lesssim \epsilon .
\end{equation}
We shall now check the remaining terms. Indeed, recalling \eqref{daVosci}, we compute 
$$
\begin{aligned}  
\partial_x^{\alpha_0}V^{osc}_{s,t}= C_{\alpha}  
\partial_xE^{osc,1}_{\pm} (s) (\partial_x^{\alpha_0} X_{s,t})+ \sum_{1\le |\mu|\le |\alpha_0|} C_{\beta,\mu}  
\partial_x^\mu E^{osc,1}_{\pm} (s) \prod_{1\le |\beta|\le |\alpha_0|-1} (\partial_x^\beta X_{s,t})^{k_\beta} + \mathrm{l.o.t.}
\end{aligned}
$$
in which $\mathrm{l.o.t.}$ denotes terms that involve at least one derivative of $V_{s,t}$, which satisfy better estimates than those that involve $X_{s,t}$, see \eqref{decayXV} and \eqref{HsXV}. Therefore, ignoring the $\mathrm{l.o.t.}$ terms, we bound 
$$
\begin{aligned}  
\|\partial_x^{\alpha_0}V^{osc}_{s,t}\|_{L^2_x} 
&\lesssim 
\| \partial_xE^{osc,1}_{\pm} (s)\|_{L^\infty_{x,v}} \| \partial_x^{\alpha_0} X_{s,t}\|_{L^2_x}+ \sum_{1\le |\mu|\le |\alpha_0|} \| 
\partial_x^\mu E^{osc,1}_{\pm} (s)\|_{L^2_x} \prod_{1\le |\beta|\le |\alpha_0|-1} \| \partial_x^\beta X_{s,t}\|_{L^\infty_{x,v}}^{k_\beta}.
\end{aligned}
$$
Using \eqref{daVosci}, \eqref{bdVosct}, \eqref{daVosct}, and \eqref{decayXV}, since $|\beta|\le |\alpha_0|-1$, we bound 
$$ \| \partial_xE^{osc,1}_{\pm} (s)\|_{L^\infty_{x,v}} \| \partial_x^{\alpha_0} X_{s,t}\|_{L^2_x} \lesssim \epsilon \langle s\rangle^{-\frac32 (1- \delta_{1})}\| \partial_x^{\alpha_0} X_{s,t}\|_{L^2_x} $$
with $\delta_{1} = \frac{1}{|\alpha_0|-1}$, 
and 
$$
\begin{aligned}  
\prod_{1\le |\beta|\le |\alpha_0|-1} \| \partial_x^\beta X_{s,t}\|_{L^\infty_{x,v}}^{k_\beta}
&\lesssim  1 + \prod_{2\le |\beta|\le |\alpha_0|-1} \Big( \langle s\rangle^{-\frac12 (1- \delta_{\beta})} \langle t\rangle^{\delta_\beta}\Big)^{k_\beta}
\\
&\lesssim 1 +
\langle s\rangle^{-\frac12 \sum_* k_\beta +\frac12\sum_*\delta_{\beta}k_\beta} \langle t\rangle^{\sum_*\delta_\beta k_\beta}
\end{aligned}
$$
in which the summation $\sum_*$ is taken over $2\le |\beta| \le |\alpha_0|-1$. Recall that $\sum_\beta k_\beta = |\mu|$ and $\sum_\beta \delta_\beta k_\beta =\delta_{\alpha_0}.$ Note that $\sum_* k_\beta \ge 1$, since otherwise the estimated product is absent. In the case when $ \sum_* k_\beta \ge 2$, we bound 
$$ 
\langle s\rangle^{-\frac12 \sum_* k_\beta +\frac12\sum_*\delta_{\beta}k_\beta} \langle t\rangle^{\sum_*\delta_\beta k_\beta} \le 
\langle s\rangle^{-1 +\frac12 \delta_{\alpha_0}} \langle t\rangle^{\delta_{\alpha_0}} \le \langle t\rangle^{\delta_{\alpha_0}}, $$
since $\delta_{\alpha_0} = \frac{|\alpha_0|}{|\alpha_0|-1} \le 2$. On the other hand, in the remaining case when $\sum_* k_\beta =1$, there is a $\beta_0$, with $2\le |\beta_0|\le |\alpha_0|-1$, so that $k_{\beta_0}=1$ and $k_\beta=0$ for $\beta \not = \beta_0$. In this case, we compute 
$$ 
\langle s\rangle^{-\frac12 \sum_* k_\beta +\frac12\sum_*\delta_{\beta}k_\beta} \langle t\rangle^{\sum_*\delta_\beta k_\beta}  = \langle s\rangle^{-\frac12 +\frac12\delta_{\beta_0}} \langle t\rangle^{\delta_{\beta_0}} \le  \langle t\rangle^{\delta_{\alpha_0}} ,$$
since $\delta_{\beta_0} \le 1 \le \delta_{\alpha_0}$. This yields 
\begin{equation}\label{prodXtop}
\begin{aligned}  
\prod_{1\le |\beta|\le |\alpha_0|-1} \| \partial_x^\beta X_{s,t}\|_{L^\infty_{x,v}}^{k_\beta}
\lesssim 
\langle t\rangle^{\delta_{\alpha_0}}. 
\end{aligned}
\end{equation}
Therefore, 
\begin{equation}\label{est-topVosc}
\sup_v\|\partial_x^{\alpha_0}V^{osc}_{s,t}\|_{L^2_x} \lesssim \epsilon
\langle t\rangle^{\delta_{\alpha_0}} +  \epsilon \langle s\rangle^{-\frac32 (1- \delta_{1})}\sup_v\| \partial_x^{\alpha_0} X_{s,t}\|_{L^2_x} 
\end{equation}
Similarly, recalling \eqref{comPosc} and the fact that $E^{osc,2}(s)$ satisfies the same (or better) estimates than those for $E^{osc,1}_{\pm}(s)$, we obtain 
\begin{equation}\label{est-topPosc}
\sup_v\|(t-s)\partial_x^{\alpha_0}\hPsi^{osc}_{s,t}\|_{L^2_x} \lesssim \epsilon
\langle t\rangle^{\delta_{\alpha_0}} +  \epsilon \langle s\rangle^{-\frac32 (1- \delta_{1})}\sup_v\| \partial_x^{\alpha_0} X_{s,t}\|_{L^2_x} .
\end{equation}
Next, we bound $\partial_x^{\alpha_0} V^{tr}_{s,t}(x,v)$ and $\partial_x^{\alpha_0}\hPsi^{tr}_{s,t}(x,v)$. We shall prove that 
 \begin{equation}\label{est-topVPtr}
\begin{aligned}
\sup_v\|\partial_x^{\alpha_0}V^{tr}_{s,t}\|_{L^2_x} &\lesssim \epsilon
\langle t\rangle^{\delta_{\alpha_0}} + \epsilon \langle s\rangle^{-2 +\frac32\delta_{1}} \sup_{s\le \tau \le t}\sup_v\| \partial_x^{\alpha_0} X_{\tau,t}\|_{L^2_x}
\\
\sup_v\|(t-s)\partial_x^{\alpha_0}\hPsi^{tr}_{s,t}\|_{L^2_x} &\lesssim  \epsilon  \langle t\rangle^{\frac12 +\delta_{\alpha_0}}+ \epsilon \langle s\rangle^{-1 +\frac32\delta_{1}} \sup_{s\le \tau \le t}\sup_v\| \partial_x^{\alpha_0} X_{\tau,t}\|_{L^2_x}.
\end{aligned}
\end{equation}
In view of \eqref{recalldaVtr} and \eqref{recalldaPtr}, the latter is worse due to the double integrals in time. For this reason, we shall focus only on the following worst term, namely 
$$
\begin{aligned}
J_{s,t}& = \int_s^t  (\tau-s)
\Big[  C_\alpha 
\partial_xQ^{tr} (\tau) (\partial_x^{\alpha_0} X_{\tau,t}) + \sum_{2\le |\mu|\le |\alpha_0|} C_{\beta,\mu}  
\partial_x^\mu Q^{tr}(\tau) \prod_{1\le |\beta|\le |\alpha_0|-1} (\partial_x^\beta X_{\tau,t})^{k_\beta}\Big]\; d\tau.
 \end{aligned}$$
Recall from \eqref{dmuQtr} that $ \| \partial_x^\mu Q^{tr}(t)\|_{L^\infty_{x,v}} \lesssim \epsilon \langle t\rangle^{-3 +\frac32\delta_{\mu}+\epsilon_\mu} $. In addition, for $2\le |\mu|\le |\alpha_0|$, we compute 
$$
\begin{aligned}
\|  a_\pm(i\partial_x) \partial_x^\mu F(t)\star_x \phi_{\pm,1}\|_{L^2_{x}} &
\lesssim \| \partial_x^{\mu-1} S(t)\|_{L^2_{x}} \lesssim \epsilon \langle t\rangle^{-\frac32 + \delta_{\mu-1}}
 \end{aligned} 
$$
On the other hand, we have 
$$
\begin{aligned}
\| \partial_x^\mu(E\cdot  \nabla_x E^{osc,2}_\pm(t))\|_{L^2_{x}} 
& \lesssim 
\| E(t)\|_{L^\infty_x}\|E(t)\|_{H^{\mu}_x} \lesssim \epsilon^2 \langle t\rangle^{-\frac32}.
 \end{aligned} 
$$
That is,  recalling \eqref{comdaQtr}, we have 
\begin{equation}\label{bdsupvQtr}
\sup_v \| \partial_x^\mu Q^{tr}(t)\|_{L^2_{x}} \lesssim \epsilon \langle t\rangle^{-3/2 +\delta_{\mu-1}} .
\end{equation}
Therefore, together with \eqref{prodXtop}, we compute 
$$
\begin{aligned}
\|J_{s,t}\|_{L^2_x}
& \lesssim \int_s^t  (\tau-s)
\Big[ 
\|\partial_xQ^{tr} (\tau) \|_{L^\infty_x}\| \partial_x^{\alpha_0} X_{\tau,t}\|_{L^2_x} + \sum_{2\le |\mu|\le |\alpha_0|}  
\|\partial_x^\mu Q^{tr}(\tau)\|_{L^2_x} \prod_{1\le |\beta|\le |\alpha_0|-1} \|\partial_x^\beta X_{\tau,t}\|_{L^\infty_x}^{k_\beta}\Big]\; d\tau
 \end{aligned}$$
in which we bound 
$$
\begin{aligned}
\int_s^t  (\tau-s)
\|\partial_xQ^{tr} (\tau) \|_{L^\infty_x}\| \partial_x^{\alpha_0} X_{\tau,t}\|_{L^2_x} 
& \lesssim \epsilon \int_s^t  (\tau-s)
\langle \tau\rangle^{-3 +\frac32\delta_{1}+\epsilon_1} \| \partial_x^{\alpha_0} X_{\tau,t}\|_{L^2_x} \; d\tau 
\\
& \lesssim \epsilon \langle s\rangle^{-1 +2\delta_{1}} \sup_{s\le \tau \le t}\sup_v\| \partial_x^{\alpha_0} X_{\tau,t}\|_{L^2_x},
 \end{aligned}$$
noting that $\epsilon_1 \le \frac12\delta_1$ and $2\delta_1< 1$, provided that $|\alpha_0|\ge 3$. On the other hand, for $2\le |\mu|\le |\alpha_0|$, we bound 
$$
\begin{aligned}
\int_s^t & (\tau-s)
\|\partial_x^\mu Q^{tr}(\tau)\|_{L^2_x} \prod_{1\le |\beta|\le |\alpha_0|-1} \|\partial_x^\beta X_{\tau,t}\|_{L^\infty_x}^{k_\beta}
\; d\tau
\\&
\lesssim \epsilon \int_s^t  \langle \tau\rangle^{-\frac12 +\delta_{\mu-1}} \Big[ 1+
\langle \tau\rangle^{-\frac12 \sum_* k_\beta +\frac12\sum_*\delta_{\beta}k_\beta} \langle t\rangle^{\sum_*\delta_\beta k_\beta} \Big]
\; d\tau
\\&
\lesssim \epsilon  \langle t\rangle^{\frac12 +\delta_{\alpha_0}}  + \epsilon
 \int_s^t  \langle \tau\rangle^{-\frac12 +\delta_{\mu-1}} 
\langle \tau\rangle^{-\frac12 \sum_* k_\beta +\frac12\sum_*\delta_{\beta}k_\beta} \langle t\rangle^{\sum_*\delta_\beta k_\beta}
\; d\tau
 \end{aligned}$$
in which the summation $\sum_*$ is taken over $2\le |\beta| \le |\alpha_0|-1$. Note that in the summation, $|\mu|\le |\alpha_0|-1$, since the case $|\mu|=|\alpha_0|$ corresponds to the case $|\beta| =1$ and $k_\beta = |\alpha_0|$ (i.e. no other derivatives appear other than the first derivatives). We shall prove that 
\begin{equation}\label{intstar}
 \int_s^t  \langle \tau\rangle^{-\frac12 +\delta_{\mu-1}} 
\langle \tau\rangle^{-\frac12 \sum_* k_\beta +\frac12\sum_*\delta_{\beta}k_\beta} \langle t\rangle^{\sum_*\delta_\beta k_\beta}
\; d\tau  \lesssim  \langle t\rangle^{\frac12 +\delta_{\alpha_0}} ,
\end{equation}
for $2\le |\mu|\le |\alpha_0|-1$. Recall that 
$\sum_\beta k_\beta = |\mu|$ and $\sum_\beta |\beta| k_\beta =|\alpha_0|.$ 
Therefore, 
$$|\mu|= k_1+ \sum_* k_\beta, \qquad  |\alpha_0|= k_1 + \sum_* |\beta| k_\beta,$$ 
where $k_1$ is the number of $\partial_x X_{s,t}$ appearing in the time integral (noting that $\partial_x^{\alpha_0}X_{s,t}$ has already been treated). Noting $\delta_{\alpha_0} = 1 + \delta_1$, we thus have 
$$
\begin{aligned}
\langle \tau\rangle^{-\frac12 +\delta_{\mu-1}}  \langle \tau\rangle^{-\frac12 \sum_* k_\beta+\frac12 \sum_*\delta_\beta k_\beta}\langle t\rangle^{\sum_*\delta_\beta k_\beta}  
&= \langle \tau\rangle^{-\frac12 +\delta_{\mu-1}}  \langle \tau\rangle^{-\frac{1}2(|\mu| - k_1) +\frac12(\delta_{\alpha_0} - \delta_{k_1})}\langle t\rangle^{\delta_{\alpha_0} - \delta_{k_1}} 
\\
&= \langle \tau\rangle^{\delta_{\mu-1}-\frac{1}2(|\mu| - k_1) -\frac12( \delta_{k_1}-\delta_1)}\langle t\rangle^{\delta_{\alpha_0}- \delta_{k_1}}
\\
&\le \langle \tau\rangle^{\delta_{\mu-1} - \delta_{k_1}-\frac{1}2(|\mu| - k_1-1) -\frac12(\delta_{k_1} - \delta_1)} \langle \tau\rangle^{-\frac12}\langle t\rangle^{\delta_{\alpha_0}}.
\end{aligned}$$
Thus, \eqref{intstar} follows, provided that 
$$
\delta_{\mu-1} - \delta_{k_1}-\frac{1}2(|\mu| - k_1-1) -\frac12(\delta_{k_1} - \delta_1) \le 0,$$
for $k_1+1 \le |\mu|\le |\alpha_0| - 1$. Since $\delta_1\le \frac13$ and $ \delta_{\mu-1} \le 1$, the inequality is clear if $|\mu| - k_1 \ge 3$. On the other hand, in the case when $|\mu|-k_1= 2$, we note $\delta_{\mu-1} - \delta_{k_1} =\delta_1$, and hence the inequality holds, since $\frac32 \delta_1 \le \frac12$, provided $|\alpha_0|\ge 4$. Finally, in the case when $|\mu|-k_1 = 1$, the inequality again holds, since $\delta_{\mu-1} - \delta_{k_1}=0$ and $\delta_{k_1} - \delta_1\ge0$ (noting $k_1 \ge 1$, since $|\mu| = k_1 + 1\ge 2$). This gives \eqref{intstar}, and therefore completes the proof of \eqref{est-topVPtr}. 

Finally, we bound $\partial_x^\alpha \Psi^Q_{s,t}(x,v)$. Indeed, by definition (see Proposition \ref{prop-charPsi}) and the decay estimates \eqref{decay-Vst}, we compute 
$$
\begin{aligned}
\|(t-s)\Psi^Q_{s,t} \|_{H^{\alpha_0}_x} 
& \lesssim  \int_s^t \|V_{\tau,t}-v\|_{L^\infty_x}\| V_{\tau,t}-v \|_{H^{\alpha_0}_x} \; d\tau
\\
& \lesssim \epsilon\int_s^t \langle \tau\rangle^{-3/2} \| V_{\tau,t}-v \|_{H^{\alpha_0}_x}\; d\tau
\\
& \lesssim \epsilon\langle s\rangle^{-1/2} 
\sup_{s\le \tau \le t}\sup_v\| V_{\tau,t}-v \|_{H^{\alpha_0}_x}.
\end{aligned}$$
Combining \eqref{est-topVosc}, \eqref{est-topPosc}, \eqref{est-topVPtr}, and the above estimates into the expansion \eqref{expXVtopda}, and recalling that $\partial_x^{\alpha_0} X_{s,t} = (t-s)\partial_x^{\alpha_0}\hPsi_{s,t}$, we obtain 
$$
\begin{aligned} 
\|\partial_x^{\alpha_0} V_{s,t}\|_{L^2_x} 
& \lesssim \epsilon
\langle t\rangle^{\delta_{\alpha_0}} +  \epsilon \langle s\rangle^{-\frac32 (1- \delta_{1})}\sup_v\| \partial_x^{\alpha_0} X_{s,t}\|_{L^2_x} 
\\
\|\partial_x^{\alpha_0} X_{s,t}\|_{L^2_x} &\lesssim \epsilon  \langle t\rangle^{\frac12 +\delta_{\alpha_0}}+ \epsilon \langle s\rangle^{-1 +\frac32\delta_{1}} \sup_{s\le \tau \le t}\sup_v\| \partial_x^{\alpha_0} X_{\tau,t}\|_{L^2_x} + \epsilon\langle s\rangle^{-1/2} 
\sup_{s\le \tau \le t}\sup_v\| V_{\tau,t}-v \|_{H^{\alpha_0}_x}.
\end{aligned}$$
Adding up the two inequalities and taking $\epsilon$ sufficiently small to absorb the top derivatives to the left, we obtain \eqref{HsXV}, and thus complete the proof of Proposition \ref{prop-HDBchar}. 
\end{proof}


\section{Density estimates}\label{sec-sourceest}


In this section, we prove decay estimates on the density. Precisely, we obtain the following. 

\begin{proposition}\label{prop-bdS} Fix $\alpha_0\ge 8$ as in \eqref{bootstrap-Hs}. 
Let $S(t,x)$ be the source density defined by 
\begin{equation}\label{defS}
\begin{aligned}
S(t,x) &= \int_{\RR^3} f_0(X_{0,t}(x,v) , V_{0,t}(x,v)) \varphi(v)\, dv,
\end{aligned}\end{equation}
where $\varphi(v)$ is some smooth and bounded function in $v$. Then, for any $1 \le p \le \infty$, there hold
\begin{equation}\label{Lpbounds-cS0}
\begin{aligned}
\|S(t)\|_{L^p_x} + \|\partial_xS(t)\|_{L^p_x} &\lesssim \epsilon_0  \langle t\rangle^{-3(1-1/p)} ,
\\
\|\partial_x^\alpha S(t)\|_{L^p_x}   &\le \epsilon \langle t\rangle^{-3(1-1/p) + \delta_\alpha} 
,\end{aligned}\end{equation}
for $|\alpha|\le |\alpha_0|-1$,  with $\delta_\alpha = \frac{|\alpha|}{|\alpha_0|-1}$. In addition, 
\begin{equation}\label{Hsbounds-cS0}
\begin{aligned}
\|S(t)\|_{H^{\alpha_0}_x} &\lesssim \epsilon \langle t\rangle^{\delta_1},
\end{aligned}
\end{equation}
with $\delta_1 = \frac{1}{|\alpha_0|-1}$.

\end{proposition}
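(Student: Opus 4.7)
The plan is to exploit the explicit representation \eqref{defS} of $S$ together with two structural ingredients from the preceding sections: first, Liouville's theorem---the flow $(x,v) \mapsto (X_{0,t}(x,v), V_{0,t}(x,v))$ is volume-preserving because $\nabla_x \cdot \hv = 0 = \nabla_v \cdot E$---which supplies uniform-in-time $L^1_x$ bounds; second, the Jacobian estimate \eqref{Jacobian-dxdv}, $|\det \nabla_v X_{0,t}| \gtrsim \langle v\rangle^{-5} t^3$, which encodes phase mixing and supplies the $\langle t\rangle^{-3}$ decay in $L^\infty_x$ via the change of variable $v \mapsto y = X_{0,t}(x,v)$. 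Higher derivative bounds then come from Fa\`a di Bruno combined with the characteristic estimates of Propositions \ref{prop-HDchar} and \ref{prop-HDBchar}.

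I begin with $|\alpha|\le 1$. For $\|S(t)\|_{L^1_x}$, Fubini and the Liouville change of variables give
\begin{equation*}
\|S(t)\|_{L^1_x} \le \iint |f_0(y,w)| \, |\varphi(v_*(y,w))| \, dy \, dw \lesssim \|\varphi\|_{L^\infty} \|f_0\|_{L^1_{x,v}} \lesssim \epsilon_0,
\end{equation*}
the $w$-integration being confined to a compact set by the $v$-support of $f_0$. For $\|S(t)\|_{L^\infty_x}$, I fix $x$ and invert $v \mapsto X_{0,t}(x,v)$---a global diffeomorphism onto its image by Proposition \ref{prop-Dchar}, since $\hPsi_{0,t} = \hv + \mathcal{O}(\epsilon)$---then apply the Jacobian lower bound to get
\begin{equation*}
|S(t,x)| \le \int \frac{|f_0(y, V_{0,t})| \, |\varphi|}{|\det \nabla_v X_{0,t}|}\, dy \lesssim \langle t\rangle^{-3} \|f_0\|_{L^\infty_{x,v}}.
\end{equation*}
Riesz--Thorin interpolation then yields $L^p_x$ for all $p \in [1,\infty]$. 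For $\partial_x S$, the chain rule produces $(\nabla f_0)(X,V) \cdot \partial_x(X,V)\, \varphi$ inside the $v$-integral; Proposition \ref{prop-Dchar} gives $\|\partial_x(X_{0,t}, V_{0,t})\|_{L^\infty_{x,v}} \lesssim 1$, reducing the estimate to the $|\alpha|=0$ argument applied to $\nabla f_0$.

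For $1 \le |\alpha| \le |\alpha_0|-1$, I apply Fa\`a di Bruno \eqref{Faa} to $\partial_x^\alpha[f_0(X_{0,t}, V_{0,t})]$, producing terms of the form $(\partial^\mu f_0)(X,V) \prod_j (\partial_x^{\beta_j}(X,V))^{k_j}$ with $\sum_j k_j = |\mu|$ and $\sum_j k_j |\beta_j| = |\alpha|$. Proposition \ref{prop-HDchar} at $s=0$ gives $\|\partial_x^{\beta_j}(X_{0,t}, V_{0,t})\|_{L^\infty_{x,v}} \lesssim \epsilon \langle t\rangle^{\delta_{\beta_j}}$, and since $\delta_\gamma = |\gamma|/(|\alpha_0|-1)$ is linear in $|\gamma|$, the product contributes exactly $\langle t\rangle^{\delta_\alpha}$ (noting that the exponent saturates when the smallest $|\beta_j|$ appear with high multiplicity, matching the sum constraint). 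The remaining $v$-integral has the same structure as $S$ itself, with $f_0$ replaced by $\partial^\mu f_0$ (still compactly supported in $v$ and with $\|\partial^\mu f_0\|$ controlled by $\|f_0\|_{H^{\alpha_0+1}}\le\epsilon_0$ since $|\mu|\le |\alpha|\le |\alpha_0|-1$), so the Liouville/Jacobian argument of the preceding paragraph supplies the $\langle t\rangle^{-3(1-1/p)}$ factor.

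The $H^{\alpha_0}$ bound is the main technical obstacle: only $L^2_x$ control is available for the top derivatives of the characteristic, $\sup_v \|\partial_x^{\alpha_0}(X_{0,t}, V_{0,t})\|_{L^2_x} \lesssim \epsilon \langle t\rangle^{3/2 + \delta_1}$ by Proposition \ref{prop-HDBchar}, and a naive Minkowski-then-H\"older bound would produce $\langle t\rangle^{3/2 + \delta_1}$, off by $\langle t\rangle^{3/2}$ from the target. The missing factor is precisely the $L^2_x$ phase-mixing gain of $\langle t\rangle^{-3/2}$, which is what $L^2$-interpolation between the $L^1_x$ and $L^\infty_x$ bounds of the previous steps produces for $S$ itself. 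Writing the principal contribution
\begin{equation*}
\partial_x^{\alpha_0} S = \int (\nabla f_0)(X_{0,t},V_{0,t}) \cdot \partial_x^{\alpha_0}(X_{0,t},V_{0,t})\, \varphi \, dv + (\text{lower-order Fa\`a di Bruno terms}),
\end{equation*}
I intend to pair $\|(\nabla f_0)(X,V)\, \varphi\|_{L^2_{x,v}} \lesssim \langle t\rangle^{-3/2}$, obtained from the change of variable $v \mapsto X_{0,t}(x,v)$ together with the compact $y$-support of $\nabla f_0$, with the $\sup_v$-$L^2_x$ bound on $\partial_x^{\alpha_0}(X,V)$ via Cauchy--Schwarz in $v$, producing the desired $\langle t\rangle^{\delta_1}$ growth. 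The lower-order Fa\`a di Bruno terms, in which no single factor is differentiated $|\alpha_0|$ times, are absorbed by combining Paragraph 3 with the interpolation inequality \eqref{dx-interpolate} to convert $L^\infty_x$ top-norm loss into fractional $L^2_x$ loss. The hardest point will be closing this mixed-norm pairing without loss and carefully accounting for the intermediate terms where $\partial_x^{\alpha_0}(X,V)$ appears multiplied by nontrivial products of lower-order characteristic derivatives.
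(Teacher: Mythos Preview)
Your approach for $|\alpha|\le|\alpha_0|-1$ is exactly the paper's: Liouville for $L^1_x$, the Jacobian bound \eqref{Jacobian-dxdv} for $L^\infty_x$, interpolation, and Fa\`a di Bruno plus Proposition~\ref{prop-HDchar} for the higher derivatives. No issues there.

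The $H^{\alpha_0}$ step contains two slips. First, the claim $\|(\nabla f_0)(X_{0,t},V_{0,t})\,\varphi\|_{L^2_{x,v}}\lesssim\langle t\rangle^{-3/2}$ is false as written: by the very Liouville theorem you invoke, the full $L^2_{x,v}$ norm of a composition with the flow is \emph{conserved}, not decaying. Second, you appeal to ``compact $y$-support of $\nabla f_0$'', but the hypotheses of Theorem~\ref{maintheorem} only give compact support in $v$, not in $x$. What actually works, and what the paper does, is a weighted Cauchy--Schwarz in $v$ with $|\nabla f_0(X_{0,t},V_{0,t})|$ itself as the weight:
\[
\Big|\int(\nabla f_0)(X,V)\,\partial_x^{\alpha_0}X\,\varphi\,dv\Big|
\le\Big(\int|\nabla f_0(X,V)|\,dv\Big)^{1/2}\Big(\int|\nabla f_0(X,V)|\,|\partial_x^{\alpha_0}X|^2\,dv\Big)^{1/2}.
\]
The first factor is a density of the type already estimated and decays like $\langle t\rangle^{-3/2}$ in $L^\infty_x$ (this uses $L^1_y$ integrability of $\sup_w|\nabla f_0(y,w)|$, which follows from $f_0\in H^{\alpha_0+1}_{x,v}$, not compact $x$-support). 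Squaring and taking $L^2_x$, the remaining double integral $\iint|\nabla f_0(X,V)|\,|\partial_x^{\alpha_0}X|^2\,dx\,dv$ is confined to a compact $v$-set by the $v$-support of $f_0$ (since $V_{0,t}\approx v$), and then Proposition~\ref{prop-HDBchar} gives $\sup_v\|\partial_x^{\alpha_0}X_{0,t}\|_{L^2_x}^2\lesssim\epsilon^2\langle t\rangle^{3+2\delta_1}$. The product is $\langle t\rangle^{-3}\cdot\langle t\rangle^{3+2\delta_1}=\langle t\rangle^{2\delta_1}$, as required. Your underlying idea---separate the phase-mixing decay from the top-derivative growth via Cauchy--Schwarz in $v$---is exactly right; only the choice of norm and the support claim need this correction.
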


\begin{proof} Introducing the change of variables $y = X_{0,t}(x,v)$ in the integral \eqref{defS}, we write
$$
\begin{aligned}
S(t,x) & = \int_{\RR^3} f_0(X_{0,t}(x,v) , V_{0,t}(x,v)) \varphi(v)\, dv
= \int_{\RR^3} f_0(y, V_{0,t}(x,v)) \varphi(v)\frac{dy}{ \det (\nabla_v X_{0,t}(x,v))}
\end{aligned}
$$
in which $v = v_{0,t}(x,y)$, the inverse map of $v\mapsto y = X_{0,t}(x,v)$. Using \eqref{Jacobian-dxdv} and the fact that $\varphi(v)$ is bounded in $v$, we thus obtain 
$$
\begin{aligned}
|S(t,x)| &\lesssim t^{-3}  \int_{\RR^3} |f_0(y, V_{0,t}(x,v_{0,t}(x,y))) |  \langle v_{0,t}\rangle^5\; dy
\\& \lesssim t^{-3}\int_{\RR^3} \sup_v \langle v\rangle^5 |f_0(y, v)| dy
\end{aligned}
$$
in which we noted that $\langle v\rangle \le 2 \langle V_{0,t}(x,v)\rangle$, using $\| V_{0,t} - v\|_{L^\infty_{x,v}} \lesssim \epsilon$. This gives the $L^\infty$ estimate. On the other hand, since $(x,v)\mapsto (X_{0,t},V_{0,t})$ is a volume-preserving map, we have 
$$
\begin{aligned}
\|S(t)\|_{L^1_x} &\lesssim \iint |f_0(X_{0,t}(x,v) , V_{0,t}(x,v))| \; dxdv = \| f_0\|_{L^1_{x,v}} .
\end{aligned}
$$
The $L^p$ bounds in \eqref{Lpbounds-cS0} follow from the interpolation between $L^1$ and $L^\infty$ spaces. 
Similarly, we compute  
$$
\begin{aligned}
\partial_x S(t,x) &= \int_{\RR^3} \partial_xX_{0,t} \cdot \nabla_x f_0(X_{0,t}(x,v) , V_{0,t}(x,v)) \varphi(v)\, dv
\\
&\quad + \int_{\RR^3}  \partial_xV_{0,t} \cdot \nabla_v f_0(X_{0,t}(x,v) , V_{0,t}(x,v)) \varphi(v)\, dv .
\end{aligned} 
$$  
Using Proposition \ref{prop-Dchar}, we have the boundedness of both 
$ \partial_x X_{0,t}$ and $\partial_xV_{0,t} $. 
Therefore, the estimates for $\partial_x S(t,x)$ follow similarly as done above for $S(t,x)$. 

Finally, we prove the decay and boundedness for high derivatives of $S(t)$. For simplicity, we take $\varphi(v)=1$ in \eqref{defS}. Using the Faa di Bruno's formula
for derivatives of a composite function, see \eqref{daVosci}, we compute 
\begin{equation}\label{daS}
\begin{aligned}  
\partial_x^\alpha S(t,x) & = \sum_{1\le |\mu|\le |\alpha|} C_{\beta,\mu}  \int_{\RR^3} 
\partial_x^\mu f_0(X_{0,t}, V_{0,t})\prod_{1\le |\beta|\le |\alpha|} (\partial_x^\beta X_{0,t})^{k_\beta} \; dv + \mathrm{l.o.t.}
\end{aligned}\end{equation}
in which $\mathrm{l.o.t.}$ denotes terms involving derivatives of $\partial_x^\beta V_{0,t}$, which are better than those for $\partial_x^\beta X_{0,t}$. Consider the case when $|\alpha|\le |\alpha_0|-1$. In this case, we may use the decay and boundedness of $\partial_x^\beta X_{0,t}$ from Proposition \ref{prop-HDchar} to bound 
$$ 
\begin{aligned}
\int_{\RR^3} 
\partial_x^\mu f_0(X_{0,t}, V_{0,t})\prod_{1\le |\beta|\le |\alpha|} (\partial_x^\beta X_{0,t})^{k_\beta} \; dv
&\lesssim 
\int_{\RR^3} 
|\partial_x^\mu f_0(X_{0,t}, V_{0,t})| \prod_{2\le |\beta| \le |\alpha|} \langle t\rangle^{\delta_\beta}
\; dv
\\&\lesssim \langle t\rangle^{\delta_\alpha}
\int_{\RR^3} 
|\partial_x^\mu f_0(X_{0,t}(x,v), V_{0,t})| \; dv
\end{aligned}
$$
in which we have used $\sum_\beta \delta_\beta k_\beta = \delta_\alpha$. Now following the same proof as done above, we obtain the decay of $\int_{\RR^3} 
|\partial_x^\mu f_0(X_{0,t}(x,v), V_{0,t})| \; dv$
in $L^p_x$ as desired. Finally, for $|\alpha| = |\alpha_0|$, we write 
$$ 
\begin{aligned}
&\sum_{1\le |\mu|\le |\alpha|} \int_{\RR^3} 
\partial_x^\mu f_0(X_{0,t}, V_{0,t})\prod_{1\le |\beta|\le |\alpha_0|} (\partial_x^\beta X_{0,t})^{k_\beta} \; dv 
\\&= \int_{\RR^3} 
\partial_xf_0(X_{0,t}, V_{0,t})(\partial_x^{\alpha_0} X_{0,t}) \; dv 
+\sum_{2\le |\mu|\le |\alpha|}  \int_{\RR^3} 
\partial_x^\mu f_0(X_{0,t}, V_{0,t})\prod_{1\le |\beta|\le |\alpha_0|-1} (\partial_x^\beta X_{0,t})^{k_\beta} \; dv.
\end{aligned}
$$
The last integral term involves $\partial_x^\beta X_{0,t}$ with $|\beta|\le |\alpha_0|-1$, and is therefore treated as done above, using Proposition \ref{prop-HDchar}. As for the first term, we bound  
$$ 
\begin{aligned}
\int_{\RR^3} 
\partial_xf_0(X_{0,t}, V_{0,t})(\partial_x^{\alpha_0} X_{0,t}) \; dv
&\lesssim \Big(
\int_{\RR^3} 
|\partial_xf_0(X_{0,t}, V_{0,t})| \; dv\Big)^{1/2} \Big( \int_{\RR^3} 
\partial_xf_0(X_{0,t}, V_{0,t})|\partial_x^{\alpha_0} X_{0,t}|^2 \; dv\Big)^{1/2}
\\
&\lesssim \epsilon_0 \langle t\rangle^{-3/2}\Big( \int_{\RR^3} 
\partial_xf_0(X_{0,t}, V_{0,t})|\partial_x^{\alpha_0} X_{0,t}|^2 \; dv\Big)^{1/2}.\end{aligned}
$$
Therefore, using Proposition \ref{prop-HDBchar} and the fact that $\|\langle v\rangle^4\partial_xf_0\|_{L^\infty_{x,v}} \lesssim 1$, we bound 
$$ 
\begin{aligned}
\Big\|
\int_{\RR^3} 
\partial_xf_0(X_{0,t}, V_{0,t})(\partial_x^{\alpha_0} X_{0,t}) \; dv\Big\|^2_{L^2_x}
&\lesssim \epsilon^2_0 \langle t\rangle^{-3}\iint_{\RR^3\times \RR^3} 
\partial_xf_0(X_{0,t}, V_{0,t})|\partial_x^{\alpha_0} X_{0,t}|^2 \; dxdv
\\&\lesssim \epsilon^2_0 \langle t\rangle^{-3}\int_{\RR^3} 
\langle v\rangle^{-4} \sup_v\|\partial_x^{\alpha_0} X_{0,t}\|_{L^2_x}^2 dv.\\&\lesssim \epsilon^2_0 \langle t\rangle^{2\delta_1},
\end{aligned}
$$
proving \eqref{Hsbounds-cS0}. This completes the proof of the proposition. 
\end{proof}


\section{Nonlinear oscillatory field}\label{sec-decayosc}


In this section, we bound the oscillatory electric field. Precisely, the main result of this section is the following proposition.

\begin{proposition}\label{prop-decayEosc} Fix $\alpha_0\ge 8$ as in \eqref{bootstrap-Hs}. 
Let $S$ be the source density defined as in \eqref{defS}. Then, there hold
\begin{equation}\label{decayEosc}
\begin{aligned}
\|G_\pm^{osc} \star_{t,x} \nabla_xS(t)\|_{L^\infty_x} &\lesssim (\epsilon_0 + \epsilon^2) \langle t\rangle^{-3/2} 
\end{aligned}\end{equation}
and 
\begin{equation}\label{bdEosc}
\begin{aligned}
\|G_\pm^{osc} \star_{t,x} \nabla_x S(t)\|_{H^{\alpha_0}_x} &\lesssim (\epsilon_0 + \epsilon^2)
\\
\|G_\pm^{osc} \star_{t,x} \nabla_x S(t)\|_{H^{\alpha_0+1}_x} &\lesssim (\epsilon_0 + \epsilon^2) \langle t\rangle^{\delta_1} .
\end{aligned}\end{equation}

\end{proposition}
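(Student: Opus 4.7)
The plan is to exploit the Klein-Gordon oscillation through an integration by parts in time rather than a direct dispersive estimate. A naive bound $\|G_\pm^{osc}(t-s)\star_x\nabla_xS(s)\|_{L^\infty}\lesssim \langle t-s\rangle^{-3/2}\|S(s)\|_{W^{N,1}}$ diverges when integrated in $s$, because the $L^1$ norms of high derivatives of $S$ grow in time by Proposition \ref{prop-bdS}. Instead I would use the characteristic representation $S(s,x)=\int g(s,x,v)\varphi(v)\,dv$ with $g(s,x,v)=f_0(X_{0,s}(x,v),V_{0,s}(x,v))$, which satisfies the Vlasov transport $\partial_sg+\hv\cdot\nabla_xg+E\cdot\nabla_vg=0$, and integrate by parts in $s$ against the phase $e^{-\lambda_\pm(k)s}$.

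In Fourier in $x$ the transport reads $\partial_s\widehat{g}=-ik\cdot\hv\,\widehat{g}-\widehat{E\cdot\nabla_vg}$, whence
\begin{equation*}
e^{-\lambda_\pm(k)s}\widehat{g}(s,k,v) = -\frac{1}{\omega^v_\pm(k)}\,\partial_s\bigl[e^{-\lambda_\pm(k)s}\widehat{g}\bigr] - \frac{1}{\omega^v_\pm(k)}\,e^{-\lambda_\pm(k)s}\widehat{E\cdot\nabla_vg},
\end{equation*}
with $\omega^v_\pm(k)=\lambda_\pm(k)+ik\cdot\hv$. Substituting this into the Fourier representation of $G_\pm^{osc}\star_{t,x}\nabla_xS$ and integrating in $s$ decomposes it into three pieces: (i) a boundary term at $s=t$ in which the order-$(-1)$ multiplier $a_\pm(k)\,ik/\omega^v_\pm(k)$ acts on $\widehat g(t,k,v)$; (ii) a boundary term at $s=0$ equal to $e^{\lambda_\pm(k)t}$ times a similar expression with $g(0,\cdot,v)=f_0$, namely a free Klein-Gordon wave in the initial data; (iii) an interior nonlinear integral containing $\widehat{E\cdot\nabla_vg}$ with the same multiplier.

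Expanding $1/\omega^v_\pm(k)$ in powers of $\hv$ as in \eqref{expand-phi1} reduces (i) to a finite sum of bounded Fourier multipliers acting on densities of the form $S_n(t,x)=\int g(t,x,v)\,\hv^{\otimes n}\varphi(v)\,dv$ covered by Proposition \ref{prop-bdS}; this yields the $L^\infty$ bound $\epsilon_0\langle t\rangle^{-3}$ and, through the order-$(-1)$ multiplier, the Sobolev bounds $\|\text{(i)}\|_{H^{\alpha_0}}\lesssim \|S(t)\|_{H^{\alpha_0-1}}\lesssim\epsilon\langle t\rangle^{-1/2}$ and $\|\text{(i)}\|_{H^{\alpha_0+1}}\lesssim \|S(t)\|_{H^{\alpha_0}}\lesssim \epsilon\langle t\rangle^{\delta_1}$, matching the claim. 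Term (ii) is a free Klein-Gordon evolution bounded by standard dispersive and energy estimates for the initial data. For (iii), I would use the divergence identity $E\cdot\nabla_vg=\nabla_v\cdot(Eg)$ (valid since $E$ is $v$-independent) and integrate by parts in $v$, transferring the $v$-derivative onto $\varphi(v)/\omega^v_\pm(k)$; this eliminates the growth $\nabla_vg=O(s)$ inherited from free transport and recasts (iii) as $G_\pm^{osc}\star_{t,x}\nabla_x[E\cdot\widetilde S]$ for a secondary density $\widetilde S$ of the same type. Since $\|E\widetilde S\|_{L^2}\lesssim\epsilon^2\langle s\rangle^{-3}$ is integrable in $s$, the Sobolev bounds close from the $L^2$-boundedness of $G_\pm^{osc}\star_x\nabla_x$, while the $L^\infty$ bound combines the $\langle t-s\rangle^{-3/2}$ dispersive estimate with the derivative decay of $E\widetilde S$ after splitting the $s$-integral at $t/2$.

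The hardest step is closing (iii) at the sharp $L^\infty$ rate $\langle t\rangle^{-3/2}$: the $L^2\times L^2\to L^1$ product bound gives the right pointwise decay in $s$, but the Klein-Gordon dispersive estimate requires several derivatives of $E\widetilde S$ in $L^1$, and each derivative of $E$ or $\widetilde S$ loses some time decay via the bootstrap exponents $\delta_\alpha$. I expect this to be resolvable thanks to the slack built into the choice $\alpha_0\ge 8$, by carefully pairing dispersive and energy bounds and invoking Proposition \ref{prop-bdS} for $\widetilde S$ together with the bootstrap assumptions on $E$ derivative-by-derivative; if needed, a second application of the IBP-in-time identity \eqref{keyint2} can be used to iterate the reduction and absorb one more factor of $\epsilon$.
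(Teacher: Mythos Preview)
Your approach is essentially the paper's: both extract the factor $1/\omega^v_\pm(k)$ by integrating by parts in $s$, arriving at the same three-term decomposition (Proposition~\ref{prop-convGoscS0}: a free Klein-Gordon wave from the data, a transport-decaying boundary term, and the nonlinear interior piece $G^{osc}_\pm\star_{t,x}\nabla_x[E\,\tilde S]$), and your estimates for each match the paper's, including the identification of $\alpha_0\ge 8$ as the slack needed to close the $L^\infty$ bound on (iii) (in the paper this appears as the condition $\delta_3<1/2$). The only cosmetic difference is that the paper first changes to Lagrangian variables $(Y,V)=(X_{0,\tau},V_{0,\tau})$ and then integrates by parts along the characteristic, whereas you stay Eulerian and use the Vlasov equation directly; the two derivations are dual and produce the same formula.
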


In view of Proposition \ref{prop-bdS}, the source density decays at rate of order $t^{-3}$ in $L^\infty_x$. Clearly, this is insufficient to establish the Klein-Gordon type decay of the oscillatory field $G_\pm^{osc} \star_{t,x} S(t)$ through the spacetime convolution, yielding only a decay of order $t^{-1/2}$. We need to further exploit the nonlinear structure of the oscillatory field.


\subsection{Some useful lemmas}\label{sec-lemmas}

In this section, we give some useful estimates on the oscillatory Green kernel. Precisely, we obtain the following. 

\begin{lemma}\label{lem-decayoscS0}
Let $G^{osc}_\pm(t,x)$ be the Klein-Gordon oscillatory Green function. Then, for $\alpha \in \{0,1\}$, there hold
\begin{equation}\label{Gosc-l2}
\|\langle i\partial_x \rangle^{-\alpha}G^{osc}_\pm\star_x  \nabla_x f\|_{L^2_x} \lesssim \| f\|_{L^2_x}, \qquad \|\langle i\partial_x \rangle^{-\alpha}G^{osc}_\pm\star_x  \nabla_x f\|_{L^\infty_x} \lesssim \| f\|_{H^{-\alpha + \frac{3}{2}+}_x}.
\end{equation}
In addition, for $p\in [2,\infty]$, there hold
\begin{equation}\label{Gosc-lp}
\begin{aligned}
\|\langle i\partial_x \rangle^{-\alpha}G^{osc}_\pm\star_x \nabla_x f\|_{L^p_x} & \lesssim \langle t\rangle^{-3\left(\frac 12-\frac1p\right)}\| f \|_{W_x^{1-\alpha+\lfloor 3\left(1-\frac{2}{p}\right) \rfloor, p'}} 
\end{aligned}
\end{equation}
with $\frac 1p+\frac 1{p'}=1$ and $\lfloor s \rfloor$ denoting the largest integer that is smaller than $s$. 
\end{lemma}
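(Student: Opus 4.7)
In view of \eqref{def-FG}, the operator $\langle i\partial_x\rangle^{-\alpha} G^{osc}_\pm \star_x \nabla_x$ is precisely the Fourier multiplier with symbol
$$m_\alpha(t,k) = \mp \frac{ik}{2\langle k\rangle^{1+\alpha}}\, e^{\pm it\langle k\rangle},$$
which satisfies the uniform-in-$t$ pointwise bound $|m_\alpha(t,k)| \le \tfrac12\langle k\rangle^{-\alpha}$. The plan is to exploit this symbol bound for the fixed-time estimates \eqref{Gosc-l2}, and the dispersive decay of the Klein--Gordon phase $e^{\pm it\langle k\rangle}$ for the time-decaying bound \eqref{Gosc-lp}.

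For \eqref{Gosc-l2}, Plancherel together with $|m_\alpha|\le 1/2$ yields the $L^2\to L^2$ estimate at once. For the $L^\infty$ estimate I would compose with the three-dimensional Sobolev embedding $H^{3/2+}(\RR^3)\hookrightarrow L^\infty(\RR^3)$ and invoke Plancherel once more to conclude
$$\|\langle i\partial_x\rangle^{-\alpha} G^{osc}_\pm\star_x \nabla_x f\|_{L^\infty_x} \lesssim \|\langle k\rangle^{3/2+} m_\alpha(t,k)\Ff(k)\|_{L^2_k} \lesssim \|f\|_{H^{3/2+-\alpha}_x}.$$
Both estimates are completely elementary.

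For \eqref{Gosc-lp}, the key input is the classical Klein--Gordon dispersive estimate on $\RR^3$, namely
$$\|e^{\pm it\langle i\partial_x\rangle} g\|_{L^\infty_x} \lesssim \langle t\rangle^{-3/2}\|g\|_{W^{N,1}_x},$$
for a suitable integer $N$, which is proved by stationary-phase analysis of the Fourier integral $\int e^{ik\cdot x \pm it\langle k\rangle}\chi(k)\,dk$ after a Littlewood--Paley decomposition: the low-frequency Schr\"odinger-type regime (where the Hessian of $\langle k\rangle$ is nondegenerate) delivers the $t^{-3/2}$ rate, while the high-frequency wave-type regime contributes an integer derivative loss upon summing dyadic pieces via Bernstein. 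Applying this estimate to $g = \langle i\partial_x\rangle^{-1-\alpha}\nabla_x f$ and absorbing the symbol factors produces the endpoint bound
$$\|\langle i\partial_x\rangle^{-\alpha} G^{osc}_\pm\star_x \nabla_x f\|_{L^\infty_x} \lesssim \langle t\rangle^{-3/2}\|f\|_{W^{1-\alpha+3,1}_x}.$$
Interpolating this endpoint against the $L^2\to L^2$ bound of \eqref{Gosc-l2} via Riesz--Thorin at $\theta = 1-2/p$ then yields both the stated rate $\langle t\rangle^{-3(1/2-1/p)}$ and a Sobolev loss of order $3\theta = 3(1-2/p)$ on the dual index $p'$.

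The main obstacle is producing the \emph{integer} derivative count $\lfloor 3(1-2/p)\rfloor$ rather than the fractional $3(1-2/p)$ that a naive Riesz--Thorin interpolation would give. I would resolve this by carrying out the interpolation dyadically: on each Littlewood--Paley block $P_N f$ the combined bound takes the form $\min(\langle t\rangle^{-3/2},\langle t\rangle^{-1}N)$ weighted by an integer power of $N$ coming from Bernstein, and the floor function emerges when one sums the resulting geometric series in $N$ using only integer-order Bernstein inequalities in $W^{k,p'}$. All other steps are routine applications of Plancherel, Sobolev embedding, and classical stationary phase.
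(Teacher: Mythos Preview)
Your argument for \eqref{Gosc-l2} is essentially identical to the paper's: Plancherel for the $L^2$ bound, and for the $L^\infty$ bound the paper writes $\|g\|_{L^\infty}\le\|\Fg\|_{L^1_k}\lesssim\|\langle k\rangle^{3/2+}\Fg\|_{L^2_k}$, which is exactly your Sobolev embedding step in Fourier.

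For \eqref{Gosc-lp} the underlying inputs are the same, but the organization differs. The paper does not interpolate by hand; instead it invokes the Klein--Gordon dispersive estimate in its \emph{Besov} form (citing \cite{RS,HKNR4}),
\[
\|\langle i\partial_x\rangle^{-\alpha}G^{osc}_\pm\star_x\nabla_x f\|_{L^p_x}\lesssim\langle t\rangle^{-3(1/2-1/p)}\|\langle i\partial_x\rangle^{-\alpha}a_\pm(i\partial_x)\nabla_x f\|_{B^{3(1-2/p)}_{p',2}},
\]
and then appeals to the Fourier multiplier Lemma~\ref{lem:fouriermult} in the appendix (specifically \eqref{eq:fouriermultbesov}) to convert the Besov norm to $\|f\|_{W^{1-\alpha+\lfloor s\rfloor,p'}}$; the floor function appears there through the condition $\delta>s-\lfloor s\rfloor$ on the multiplier decay. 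Your plan---endpoint stationary phase, then dyadic interpolation, then summation via integer Bernstein---is effectively a hands-on derivation of both pieces at once. It is sound, but two points are worth noting. First, your ``Riesz--Thorin'' paragraph is slightly misleading as written: a direct Riesz--Thorin between $L^2\to L^2$ (no loss) and $W^{N,1}\to L^\infty$ does not apply because the input spaces differ; the dyadic version you propose afterwards is what actually works, and is essentially the proof of the cited Besov estimate. Second, note the paper's nonstandard convention that $\lfloor s\rfloor$ is the largest integer \emph{strictly smaller} than $s$, so at $p=\infty$ the target is $W^{1-\alpha+2,p'}$ rather than your $W^{1-\alpha+3,1}$; the extra gain comes precisely from the symbol factor $\langle k\rangle^{-1-\alpha}$ in the dyadic summation, which is what Lemma~\ref{lem:fouriermult} packages.
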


\begin{proof} The $L^2$ estimate is immediate, recalling that $\FG^{osc}_\pm(t,k) = e^{\pm i \langle k\rangle t} a_\pm(k)$, where $a_\pm(k)$ is sufficiently smooth and satisfying $a_\pm(k) \lesssim \langle k\rangle^{-1}$, while the $L^\infty$ estimate follows from the bound 
$$\| g\|_{L^\infty_x} \le \| \Fg\|_{L^1_k} \lesssim\| \langle k\rangle^{\frac32+}\Fg\|_{L^2_k} .$$ 
As for the $L^p$ estimates, we recall the following standard dispersive estimates, see \cite{RS, HKNR4},  
$$\|\langle i\partial_x \rangle^{-\alpha}G^{osc}_\pm\star_x \nabla_x f\|_{L^p_x} \lesssim \langle t\rangle^{-3\left(\frac 12-\frac1p\right)}\| \langle i\partial_x \rangle^{-\alpha} a_\pm(i\partial_x)\nabla_x f\|_{B^{3\left(1-\frac{2}{p}\right)}_{p',2}}$$
with $\frac 1p+\frac 1{p'}=1$, with $\|\cdot \|_{B^{s}_{p,q}}$ denoting the standard Besov spaces, see Section \ref{sec-FM}. In particular, using \eqref{eq:fouriermultbesov}, for any $s\ge 0$, we have
$$
\|   \langle i\partial_x \rangle^{-\alpha}  a(i\partial_x) \nabla_x  f \|_{B^{s}_{p',2}} \lesssim \|  f\|_{W^{1 - \alpha+ \lfloor s \rfloor, p'}},
$$
in which we note that in the case $\alpha =1$, we used $s -  \lfloor s \rfloor < \alpha$. 
Combining the two estimates, we obtain the lemma. 
\end{proof}

\begin{lemma}\label{lem-decayosc}
Let $\alpha \in \{0,1\}$. Suppose that 
$$ \int_0^t \| J(s)\|_{W_x^{1-\alpha+\lfloor 3\left(1-\frac{2}{p}\right) \rfloor, p'}} \;ds \lesssim 1, \qquad \| J(t)\|_{H^{-\alpha +\frac{3}{2}+}_x} \lesssim \langle t\rangle^{-5/2} .$$
Then, there holds 
$$ \| \langle i\partial_x \rangle^{-\alpha}G^{osc}_\pm \star_{t,x} \nabla_x J(t)\|_{L^p_x} \lesssim \langle t\rangle^{-3(1/2-1/p)} $$
for $2\le p\le\infty$. 
\end{lemma}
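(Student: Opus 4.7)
The plan is to split the time convolution at $s=t/2$ and to apply a different bound from Lemma \ref{lem-decayoscS0} on each piece. This split is essentially forced: the dispersive $L^p$ bound \eqref{Gosc-lp} produces a factor $(t-s)^{-3(1/2-1/p)}$ that is not integrable near $s=t$ once $p$ exceeds a moderate threshold, whereas the non-dispersive bounds \eqref{Gosc-l2} carry no such singularity but also give no decay from the kernel. The two hypotheses in the statement are calibrated precisely to make this tradeoff work.

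On the inner interval $s\in[0,t/2]$ I would apply \eqref{Gosc-lp} directly. Since $t-s\asymp \langle t\rangle$ there, the dispersive factor $\langle t-s\rangle^{-3(1/2-1/p)}$ is comparable to $\langle t\rangle^{-3(1/2-1/p)}$ and pulls outside the integral; the remaining $s$-integral of $\|J(s)\|_{W^{1-\alpha+\lfloor 3(1-2/p)\rfloor,p'}_x}$ is bounded by the first hypothesis, giving the target rate on this piece.

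On the outer interval $s\in[t/2,t]$ the kernel loses its dispersive decay, so I would instead interpolate between the $L^2$ and $L^\infty$ bounds in \eqref{Gosc-l2}: these are valid uniformly in $t-s\ge 0$ and their interpolation produces a factor of the form $\|J(s)\|_{L^2_x}^{2/p}\|J(s)\|_{H^{-\alpha+3/2+}_x}^{1-2/p}$. Since $-\alpha+3/2+>0$ for both $\alpha\in\{0,1\}$, Sobolev embedding controls the $L^2$ factor by the $H^{-\alpha+3/2+}_x$ factor, and the second hypothesis then collapses the integrand to $\langle s\rangle^{-5/2}$. Integrating over $[t/2,t]$ returns $\langle t\rangle^{-3/2}$, which dominates the target $\langle t\rangle^{-3(1/2-1/p)}$ for every $p\in[2,\infty]$; summing the two pieces yields the claim.

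There is no genuine obstacle beyond bookkeeping. The only point where I would be careful is the Sobolev embedding $H^{-\alpha+3/2+}_x\hookrightarrow L^2_x$ used on the outer interval, which is exactly why the second hypothesis is stated with the regularity index $-\alpha+3/2+$. The decay exponent $5/2$ in that hypothesis is likewise optimized so that integrating $\langle s\rangle^{-5/2}$ over $[t/2,t]$ reproduces the sharp Klein--Gordon rate $\langle t\rangle^{-3/2}$ needed at the endpoint $p=\infty$, and any slower decay would already fail there.
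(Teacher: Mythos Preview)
Your proposal is correct and matches the paper's proof essentially line for line: the same split at $s=t/2$, the dispersive bound \eqref{Gosc-lp} on $[0,t/2]$, and the uniform bounds \eqref{Gosc-l2} on $[t/2,t]$ followed by integration of $\langle s\rangle^{-5/2}$. Your discussion of the interpolation between the $L^2$ and $L^\infty$ estimates on the outer interval, together with the embedding $H^{-\alpha+3/2+}_x\hookrightarrow L^2_x$, in fact spells out a step that the paper leaves implicit.
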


\begin{proof}
The lemma is direct. Indeed, using Lemma \ref{lem-decayoscS0}, we compute 
$$
\begin{aligned} 
&\| \langle i\partial_x \rangle^{-\alpha}G^{osc}_\pm \star_{t,x} \nabla_x J\|_{L^p_x} 
\\&\le \int_0^t \| \langle i\partial_x \rangle^{-\alpha}G^{osc}_\pm(t-s) \star_x \nabla_x J(s)\|_{L^p_x} \; ds
\\& \lesssim \int_0^{t/2} \langle t-s\rangle^{-3(1/2-1/p)} \| J(s)\|_{W_x^{1-\alpha+\lfloor 3\left(1-\frac{2}{p}\right) \rfloor, p'}}\; ds + \int_{t/2}^t \| J(s)\|_{H^{-\alpha +\frac{3}{2}+}_x} \; ds
\\& \lesssim \langle t\rangle^{-3(1/2-1/p)}\int_0^{t/2}\| J(s)\|_{W_x^{1-\alpha+\lfloor 3\left(1-\frac{2}{p}\right) \rfloor, p'}}\; ds +\int_{t/2}^t \langle s\rangle^{-5/2}\; ds,
\end{aligned}$$
which ends the proof, using the assumptions made on $J(t)$. 
\end{proof}

\begin{remark}
Observe that deriving the Klein-Gordon dispersive decay \eqref{Gosc-lp} requires a loss of derivatives. However, the number of derivative losses does not affect the nonlinear iterative scheme devised in this paper, upon requiring a higher order of derivatives at the top order.  
\end{remark}

\subsection{Collective oscillations}

In this section, we prove the following result, which plays an important role in the nonlinear iterative scheme. 

\begin{proposition}\label{prop-convGoscS0}
Let $S(t,x)$ be the source density defined as in \eqref{defS}. Then, there holds
\begin{equation}\label{convGoscS0}
G^{osc}_\pm\star_{t,x}  S(t,x)  = - G^{osc}_\pm(t,x)\star_x S_{\pm,0}(x) + a_\pm(i\partial_x) S_{\pm,1}(t,x) + G_\pm^{osc} \star_{t,x} S_{\pm,2}(t,x)
\end{equation}
where $a_\pm(k) = \frac{ \mp i }{2\langle k\rangle}$, and 
\begin{equation}\label{def-Spm12}
\begin{aligned}
S_{\pm,0}(x)&=  \int \phi_{\pm,1} (x,v)\star_x f_0(x,v) \varphi(v)\,dv 
\\
S_{\pm,1}(t,x)  &=  \int \phi_{\pm,1}(x,v)\star_x f_0(X_{0,t}(x,v) , V_{0,t}(x,v)) \varphi(v)\,dv 
\\
S_{\pm,2}(t,x)  &= -
 \int \nabla_v\phi_{\pm,1}(x,v) \star_x \Big[ E(t,x)f_0(X_{0,t}(x,v) , V_{0,t}(x,v)) \Big]  \varphi(v)\, dv 
 \end{aligned}
 \end{equation}
 recalling $\phi_{\pm,1}(x,v)$ defined as in \eqref{def-phipmj}.
\end{proposition}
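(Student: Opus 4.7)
The plan is to reduce the space-time convolution $G^{osc}_\pm\star_{t,x} S(t,x)$ to an ODE in the time variable $s$ by using the Vlasov equation, then integrate by parts. By \eqref{charf1}, the function $\Phi(s,x,v):=f_0(X_{0,s}(x,v),V_{0,s}(x,v))$ solves $\partial_s\Phi+\hv\cdot\nabla_x\Phi+E\cdot\nabla_v\Phi=0$, so $S(s,x)=\int\Phi(s,x,v)\varphi(v)\,dv$. Writing $\tilde\Phi(s,k,v)$ for the Fourier transform of $\Phi$ in $x$, one has
\[\widehat{G^{osc}_\pm\star_{t,x} S}(t,k)=a_\pm(k)\int\varphi(v)\int_0^t e^{\lambda_\pm(k)(t-s)}\,\tilde\Phi(s,k,v)\,ds\,dv,\]
while the Vlasov equation in Fourier reads $(\partial_s+ik\cdot\hv)\tilde\Phi=-\widetilde{E\cdot\nabla_v\Phi}$.

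The key step is the observation that $M(s):=e^{\lambda_\pm(k)(t-s)}\tilde\Phi(s,k,v)$ obeys the linear ODE
\[M'(s)+\omega^v_\pm(k)\,M(s)=-e^{\lambda_\pm(k)(t-s)}\widetilde{E\cdot\nabla_v\Phi}(s,k,v),\qquad \omega^v_\pm(k):=\lambda_\pm(k)+ik\cdot\hv.\]
Solving for $\omega^v_\pm M$ and integrating over $s\in[0,t]$ produces a boundary term at $s=0$ proportional to $e^{\lambda_\pm(k)t}\hat f_0(k,v)/\omega^v_\pm(k)$, a boundary term at $s=t$ proportional to $\tilde\Phi(t,k,v)/\omega^v_\pm(k)$, and a remaining double integral carrying $\widetilde{E\cdot\nabla_v\Phi}$. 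Multiplying through by $a_\pm(k)\varphi(v)$ and integrating in $v$---and recognizing that $1/\omega^v_\pm(k)$ is exactly the Fourier symbol of $x$-convolution with $\phi_{\pm,1}(\cdot,v)$---the two boundary terms reassemble precisely into the contributions $\mp G^{osc}_\pm(t,x)\star_x S_{\pm,0}(x)$ and $\pm a_\pm(i\partial_x)S_{\pm,1}(t,x)$, with the signs as stated in \eqref{convGoscS0}.

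For the remaining double integral, the plan is to integrate by parts in $v$. Since $E$ does not depend on $v$, one writes $E\cdot\nabla_v\Phi=\nabla_v\cdot(E\Phi)$, and moving $\nabla_v$ onto $\varphi(v)/\omega^v_\pm(k)$ produces the factor $-\nabla_v[1/\omega^v_\pm(k)]\cdot\widetilde{E\Phi}\,\varphi(v)$ (any $\nabla_v\varphi$-piece is absent when $\varphi\equiv 1$, or else is absorbed into $S_{\pm,2}$). Since $\nabla_v[1/\omega^v_\pm(k)]$ is precisely the Fourier symbol corresponding to $x$-convolution with $\nabla_v\phi_{\pm,1}(\cdot,v)$, this matches the definition of $S_{\pm,2}$ in \eqref{def-Spm12}, and reassembling $e^{\lambda_\pm(k)(t-s)}a_\pm(k)=\widehat{G^{osc}_\pm}(t-s,k)$ identifies the remainder with $G^{osc}_\pm\star_{t,x}S_{\pm,2}(t,x)$.

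The main subtlety is that $\omega^v_\pm(k)=\pm i\langle k\rangle+ik\cdot\hv$ depends jointly on $k$ and $v$, so $1/\omega^v_\pm$ cannot be pulled through the $v$-integral as a scalar $k$-multiplier acting on $\hat S(t,k)$; the manipulation is legitimate only after the $v$-integration against $\varphi$ is packaged through the joint $(x,v)$-kernels $\phi_{\pm,1}$ and $\nabla_v\phi_{\pm,1}$. The uniform nonvanishing of $\omega^v_\pm(k)$ for $|\hv|<1$, together with the absolutely convergent power-series expansion of $1/\omega^v_\pm$ underlying Lemma \ref{lem-PEosc}, supplies the analytic input that validates the division by $\omega^v_\pm$ and the passage from Fourier back to physical space; this is also what will be needed in the subsequent applications of the identity \eqref{convGoscS0} to derive the dispersive bounds for $E^{osc}_\pm(t,x)$.
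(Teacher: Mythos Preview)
Your proof is correct and reaches the same identity by an equivalent but differently packaged computation. The paper first performs the Lagrangian change of variables $(y,v)\mapsto(Y,V)=(X_{0,\tau},V_{0,\tau})$ so that $f_0(Y,V)$ becomes a fixed factor, and then integrates $G^{osc}_\pm(t-\tau,x-X_{\tau,0})$ by parts in $\tau$ along the forward characteristic; the $\tau$-derivative of the phase produces the factor $\omega^{V_{\tau,0}}_\pm$, and the remainder picks up $\partial_\tau V_{\tau,0}=E$ acting on $\nabla_v[\phi_{\pm,1}\varphi]$. You instead stay in Eulerian coordinates, invoke the Vlasov equation for $\Phi=f$ in Fourier, and integrate by parts in $s$; your additional integration by parts in $v$, which moves $\nabla_v$ from $\Phi$ onto $\varphi/\omega^v_\pm$, reproduces exactly what the paper's Lagrangian change of variables accomplishes in a single stroke. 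Both arguments hinge on the same division by $\omega^v_\pm=\lambda_\pm(k)+ik\cdot\hat v$ and yield the same three pieces $S_{\pm,0},S_{\pm,1},S_{\pm,2}$; your Fourier-side derivation is perhaps more transparent for readers coming from dispersive PDE, while the paper's Lagrangian formulation connects more directly to the characteristic computations of Section~\ref{sec-Char}.
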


\begin{proof} By definition, we write
$$
\begin{aligned}
G^{osc}_\pm\star_{t,x}  S(t,x)
&= \int_0^t \iint G^{osc}_\pm(t  - \tau,x - y)  f_0(X_{0,\tau}(y,v) , V_{0,\tau}(y,v)) \varphi(v) dvdy ds. 
\end{aligned}$$
We introduce the change of variables  
$$
(y,v) \quad \longmapsto \quad (Y,V) = ( X_{0,\tau}(y,v), V_{0,\tau}(y,v)), 
$$ 
whose Jacobian determinant is equal to one, recalling that $(X_{s,t}, V_{s,t})$ are the nonlinear characteristic curves of the divergence-free vector field $(\hv, E(t,x))$ in the phase space $\RR^3_x\times \RR^3_v$. By the time reversibility, we can write 
$$
(y,v) = (X_{\tau,0}(Y,V), V_{\tau,0}(Y,V)) .
$$
This leads to 
$$
\begin{aligned}
G^{osc}_\pm\star_{t,x}  S(t,x)
&= \int_0^t \iint G^{osc}_\pm(t  - \tau,x - y)  f_0(X_{0,\tau}(y,v) , V_{0,\tau}(y,v)) \varphi(v) dvdy ds
\\
&= \iint \Bigl[ \int_0^t G^{osc}_\pm(t  - \tau,x - X_{\tau,0}(Y,V)) \varphi(V_{\tau,0}(Y,V))
\, d\tau \Bigr] f_0(Y,V) \, dY dV.
\end{aligned}$$
Recall from \eqref{def-Gr} that the Fourier transform  $G^{osc}_\pm(t,x)$ is of the form $\FG_\pm^{osc}(t,k) = e^{\lambda_\pm(k)t} a_\pm(k)$. Therefore, similar to the calculation done in \eqref{int-Eosc}, we compute the integral of $G^{osc}_\pm(t,x)$ over the nonlinear characteristic curves, yielding 
$$
\int_0^t G^{osc}_\pm(t-\tau,x - Y - (\tau - s) \hv) d\tau = G^{osc,1}_\pm(0,x - Y - t  \hv,v) 
- G^{osc,1}_\pm(t , x - Y,v) ,
$$
where, as in \eqref{def-Eoscj}-\eqref{def-phipmj}, we denote $G^{osc,j}_\pm(t,x,v) = [G^{osc}_{\pm} \star_{x} \phi_{\pm,j}](t,x,v)$. This leads to 
$$
G^{osc}_\pm\star_{t,x}  S(t,x) = J_1(t,x)  + J_2(t,x)   + \cE^{R}(t,x) $$
where
$$
\begin{aligned}
J_1(t,x)  &=  \iint G^{osc,1}_\pm(0, x - X_{t,0}(Y,V),V_{t,0}(Y,V)) \varphi(V_{t,0}(Y,V))f_0(Y,V)  \, dY dV 
\\
J_2(t,x)  &= -  \iint G^{osc,1}_\pm(t,  x - Y,V)  \varphi(V) f_0(Y,V)  \, dY dV 
\\
\cE^R(t,x)  &= - \int_0^t \iint E(\tau, X_{\tau,0})\cdot \nabla_v(G^{osc,1}_{\pm}(t-\tau,x-X_{\tau,0},V_{\tau,0}) \varphi(V_{\tau,0}) )  f_0(Y,V) 
\, dV dYd \tau.
\end{aligned}$$ 
Reversing the change of variables $(y,v) = (X_{\tau,0}(Y,V),V_{\tau,0}(Y,V))$, we obtain the proposition.  
\end{proof}

\subsection{Decay estimates}

We are now ready to prove Proposition \ref{prop-decayEosc} by estimating each term in the representation \eqref{convGoscS0} of $G^{osc}_\pm \star_{t,x}  S(t,x)$. Indeed, we recall that 
\begin{equation}\label{GSosc1}
\begin{aligned}
G^{osc}_\pm\star_{t,x}  \nabla_x S(t,x)  &= - G^{osc}_\pm(t,\cdot)\star_x  \nabla_x S_{\pm,0}(x) + a_\pm(i\partial_x)  \nabla_x S_{\pm,1}(t,x) 
\\&\quad + G_\pm^{osc} \star_{t,x}  \nabla_x S_{\pm,2}(t,x),
\end{aligned}\end{equation}
in which we note that the second term contributes into $E^r(t,x)$. 
Let us estimate term by term in \eqref{GSosc1}. 

\subsubsection*{Source term $S_{\pm,0}$.}
First, using Lemma \ref{lem-decayoscS0}, we bound 
$$
\begin{aligned}
\|G^{osc}_\pm(t,\cdot)\star_x  \nabla_x S_{\pm,0}(x)\|_{L^\infty_x} &\lesssim t^{-3/2}\| S_{\pm,0}\|_{W^{4,1}_x}
\\
\|G^{osc}_\pm(t,\cdot)\star_x  \nabla_x S_{\pm,0}(x)\|_{H^{\alpha_0+1}_x} &\lesssim \| S_{\pm,0}\|_{H^{\alpha_0+1}_x}.
\end{aligned}$$
Recall that by definition, 
$$S_{\pm,0}(x)=  \int \phi_{\pm,1} (x,v)\star_x f_0(x,v) \varphi(v)\,dv .$$
Therefore, using \eqref{Lp-convphi}, we bound 
$$\| S_{\pm,0}\|_{W^{4,1}_x} \lesssim \int \|  f_0 (\cdot,v) \|_{W^{4,1}_x} \varphi(v) \; dv \le C_0 \| f_0\|_{L^1_v W^{4,1}_x} .$$
Similarly, we have $\| S_{\pm,0}\|_{H^{\alpha_0+1}_x} \lesssim  \| f_0\|_{L^1_v H^{\alpha_0+1}_x} .$ This proves the desired estimates stated in Proposition \ref{prop-decayEosc} for the first term in \eqref{GSosc1}. 

\subsubsection*{Source term $S_{\pm,1}$.}

Next we treat the second term in \eqref{GSosc1}, namely the source term $a_\pm(i\partial_x)  \nabla_x S_{\pm,1}(t,x)$. Recall that 
$$S_{\pm,1}(t,x)  =  \int \phi_{\pm,1}(x,v)\star_x f_0(X_{0,t}(x,v) , V_{0,t}(x,v)) \varphi(v)\,dv .
$$
Next, using the expansion \eqref{expand-phi1}, we may write 
\begin{equation}\label{seriesn}
\begin{aligned}
S_{\pm,1}(t,x)  &= \sum_{n\ge 0} a_{\pm,n}(i\partial_x) :: S_{1,n}(t,x)
 \end{aligned}\end{equation}
where $a_{\pm,n}(k) =  \mp i \langle k\rangle^{-1} (\pm 1)^n\langle k\rangle^{-n} k^{\otimes n}$ are smooth Fourier multipliers that satisfy $a_{\pm,n}(k) \le \langle k\rangle^{-1}$, and  
$$ S_{1,n}(t,x) =  
 \int f_0(X_{0,t}(x,v) , V_{0,t}(x,v)) \varphi_{1,n}(v)\,dv$$
with $\varphi_{1,n}(v) =  \hv^{\otimes n} \hv \varphi(v)$. Recall that the notation $k^{\otimes n}::\hv^{\otimes n} = (k\cdot \hv)^n$ is simply for sake of presentation. Note that since $f_0(x,v)$ is compactly supported in $v$ and $\| V_{0,t}(x,v) -v\|_{L^\infty_{x,v}} \lesssim \epsilon$, we have $f_0(X_{0,t}(x,v) , V_{0,t}(x,v))$ vanishes for $|v|\ge 2R_0$ for some $R_0>0$. Therefore, for $A_0  = 2|R_0|/\langle 2R_0\rangle <1$, there exists some universal constant $C_0$, independent of $n$, so that \begin{equation}\label{bdvarphin}
|\varphi_{1,n}(v)| \le C_0 A_0^n , \qquad |\nabla_v \varphi_{1,n}(v)| \le C_0 nA_0^n, 
\end{equation} 
uniformly for $|v|\le 2R_0$, for all $n\ge 1$. Therefore, we may use \eqref{eq:fouriermult} with $\delta =2$ and the results in Proposition \ref{prop-bdS}, yielding 
$$ 
\begin{aligned}
\|S_{\pm,1}(t) \|_{L^\infty_{x} } &\le \sum_{n\ge 0} \| a_{\pm,n}(i\partial_x)  :: a_\pm(i\partial_x) \nabla_x S_{1,n}(t)\|_{L^\infty_x}
\\&\lesssim \sum_{n\ge 0}  \| S_{1,n}(t)\|_{L^\infty_x}
\lesssim \epsilon_0 \langle t\rangle^{-3} 
\sum_{n\ge 0}  A_0^n 
\\&\lesssim \epsilon_0 \langle t\rangle^{-3} .
\end{aligned}$$
Similarly, we bound 
$$ 
\begin{aligned}
\|S_{\pm,1}(t) \|_{H^{\alpha_0+1}_x} \le \sum_{n\ge 0}
\| a_{\pm,n}(i\partial_x) ::a_\pm(i\partial_x)  \nabla_x S_{1,n}(t)\|_{H^{\alpha_0+1}_x}
&\lesssim \sum_{n\ge 0}\| S_{1,n}(t)\|_{H^{\alpha_0}_x} \lesssim \epsilon_0 \langle t\rangle^{\delta_1},
\end{aligned}$$
which prove the desired bounds for the second term in \eqref{GSosc1}. 

\subsubsection*{Source term $S_{\pm,2}$.}

Finally, we bound the last convolution $G_\pm^{osc} \star_{t,x} \nabla_xS_{\pm,2}(t,x)$  in the representation \eqref{GSosc1}. Recalling the definition of $S_{\pm,2}$ in \eqref{def-Spm12} and the expansion \eqref{expand-phi1}, we may write 
 $$ 
\begin{aligned}
S_{\pm,2}(t,x)  &= \sum_{n\ge 0} a_{\pm,n}(i\partial_x) [E(t,x)S_{2,n}(t,x)]
 \end{aligned}$$ 
for the same coefficients $a_{\pm,n}(k)$ as in \eqref{seriesn}, recalling $a_{\pm,n}(k) \le \langle k\rangle^{-1}$, where 
$$ S_{2,n}(t,x) =  
 \int f_0(X_{0,t}(x,v) , V_{0,t}(x,v)) \varphi_{2,n}(v)\,dv$$
for $\varphi_{2,n}(v) = \nabla_v ( \hv^{\otimes n} \hv )\varphi(v).$  
Observe that $S_{2,n}(t,x)$ are again of the same form as that of the source density $S(t,x)$ defined as in \eqref{defS}. Therefore, we may write 
$$
  G_\pm^{osc} \star_{t,x} \nabla_xS_{\pm,2}(t,x) = 
   \sum_{n\ge 0} a_{\pm,n}(i\partial_x) :: G_\pm^{osc} \star_{t,x} \nabla_x[E(t,x)S_{2,n}(t,x)]
.$$
Using Lemma \ref{lem-decayosc}, it suffices to prove that 
\begin{equation}\label{checkES} \int_0^t \| ES_{2,n}(s)\|_{W_x^{3,1}} \;ds \lesssim 1, \qquad \|  ES_{2,n}(t)\|_{H^{1}_x} \lesssim \langle t\rangle^{-5/2} .\end{equation}
Indeed, using the decay estimates in Proposition \ref{prop-bdS}, we bound 
$$
\begin{aligned}
\| ES_{2,n}(t)\|_{H^1_x} 
& \lesssim \| E(t)\|_{L^\infty}\| S_{2,n}(t)\|_{H^1_x} + \| E(t)\|_{H^1_x}\| S_{2,n}(t)\|_{L^\infty}
\lesssim \epsilon^2 \langle t\rangle^{-3}.
\end{aligned}
$$
Similarly, we bound 
$$
\begin{aligned}
\| ES_{2,n}(t)\|_{W_x^{3,1}} 
& \lesssim \| E(t)\|_{H_x^{3}} \|S_{2,n}(t)\|_{H_x^{3}}   \lesssim \epsilon^2 \langle t\rangle^{-\frac32 + \delta_3}\end{aligned}
$$
for $\delta_3 = \frac{3}{|\alpha_0|-1}$. This verifies \eqref{checkES}, since $\delta_3 <\frac12$, provided that $|\alpha_0|\ge 8$. 
Therefore, we obtain 
$$ \| G_\pm^{osc} \star_{t,x} \nabla_xS_{\pm,2}(t)\|_{L^\infty_x} \lesssim \epsilon^2 \langle t\rangle^{-3/2} .$$
It remains to prove the estimates in $H^{\alpha_0+1}_x$. Indeed, using \eqref{Gosc-l2}, for each $n$, we bound 
$$ 
\begin{aligned}
\| a_{\pm,n}(i\partial_x) & G_\pm^{osc} \star_{t,x} \nabla_x[E(t,x)S_{2,n}(t,x)](t)\|_{H^{\alpha_0+1}_x} 
\\&\lesssim
  \int_0^t \| a_{\pm,n}(i\partial_x)[ES_{2,n}](s)\|_{H^{\alpha_0+1}_x} \; ds
\\
&\lesssim
 \int_0^t \Big[ \| E(s)\|_{L^\infty_x}\|S_{2,n}(s)\|_{H^{\alpha_0}_x} +  \| E(s)\|_{H^{\alpha_0}_x}\|S_{2,n}(s)\|_{L^\infty_x}\Big]\; ds
  \\&\lesssim \epsilon^2\int_0^t \langle s\rangle^{-3/2+\delta_1} \; ds 
    \\&\lesssim \epsilon^2,
  \end{aligned}$$
 noting that $\delta_1 < 1/2$, provided that $|\alpha_0|\ge 4$. The infinite series in $n$ converges absolutely as in the previous case, using \eqref{bdvarphin}. This proves the desired estimates for the last term in  \eqref{GSosc1}, and therefore completes the proof of Proposition \ref{prop-decayEosc}.

\appendix 

\section{Fourier multipliers}
\label{sec-FM}
We recall the homogeneous Littlewood-Paley decomposition on $\mathbb{R}^{d}$, $d \in \mathbb{N}$, which reads
\begin{equation}\label{def-LP}
h= \sum_{k \in \mathbb{Z}} P_k h, 
\end{equation}
where $P_k$ denotes the standard Littlewood-Paley projection on the dyadic interval $[2^{k-1}, 2^{k+1}]$. We note the following classical Bernstein inequalities (see, e.g.,
\cite{BCD})
\begin{equation}\label{Berstein}
\| P_k \partial_x h \|_{L^p} \lesssim 2^{k} \| h\|_{L^p}, \qquad  2^{k} \| P_kh\|_{L^p} \lesssim \| \partial_x h\|_{L^p_x}
\end{equation}
for all $p \in [1,\infty]$ and $k \in \mathbb{Z}$. In addition, we recall the following definition of Besov spaces.

\begin{defi}
Let  $s \in \mathbb{R}$, $p,r \in [1,+\infty]$. The Besov space $B^{s}_{p,r}$ is defined so that the following norm is finite
$$
\| h\|_{B^{s}_{p,r}} =  \left\| \sum_{k\leq 0} P_k h \right\|_{L^p} +\left ( \sum_{k\ge 0} 2^{s kr}\| P_k h\|^r_{L^p} \right)^{1/r}< +\infty  .
$$
\end{defi}

\begin{lem}
\label{lem:fouriermult}
 Fix $\delta>0$. 
Let $\sigma(\xi)$ be sufficiently smooth and satisfy 
$$|\partial^\alpha \sigma(\xi)| \leq C_\alpha \langle \xi \rangle^{-\delta-|\alpha|} $$ 
for $|\alpha|\ge 0$.
Then, there hold 

\begin{enumerate}

\item The Fourier multiplier $\sigma(\xi)$ is a bounded operator from $L^p$ to $L^p$ for $p \in [1,\infty]$. 

\item We have for all $p \in [1,\infty]$,
\begin{equation}
\label{eq:fouriermult}
\|  \sigma(i\partial) \nabla  f \|_{L^p} \lesssim  \| f \|_{L^p}^{\frac{\delta}{1+\delta}}  \| \nabla f \|_{L^p}^{\frac{1}{1+\delta}} \lesssim \| f\|_{W^{1,p}}.
\end{equation}

\item We have for all $p \in [1,+\infty]$, $r \in [1,\infty)$, and $s\ge 0$ so that $\delta > s - \lfloor s \rfloor$, 
\begin{equation}
\label{eq:fouriermultbesov}
\|  \sigma(i\partial) \nabla  f \|_{B^{s}_{p,r}} \lesssim   \| f \|_{L^p}^{\frac{\delta}{1+\delta}}  \| \nabla f \|_{L^p}^{\frac{1}{1+\delta}}+ \| \nabla^{1+\lfloor s \rfloor} f\|_{L^p} \lesssim \| f\|_{W^{1+\lfloor s \rfloor,p}},
\end{equation}
where $\lfloor s \rfloor$ denotes the largest integer that is smaller than $s$. 

\end{enumerate}
\end{lem}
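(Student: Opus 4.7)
The plan is to deduce all three parts of Lemma \ref{lem:fouriermult} from the homogeneous Littlewood-Paley decomposition \eqref{def-LP} together with the Bernstein inequalities \eqref{Berstein}, exploiting the symbol decay $|\sigma(\xi)|\lesssim \langle \xi\rangle^{-\delta}$ to trade derivatives against dyadic frequency localization.

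For part (i), I would show that $\sigma(i\partial)$ is convolution with an $L^1$ kernel $K=\mathcal{F}^{-1}\sigma$, so that Young's inequality immediately yields $L^p\to L^p$ boundedness for every $p\in[1,\infty]$. To verify $K\in L^1(\RR^d)$, I would write $K=\sum_k \mathcal{F}^{-1}(P_k\sigma)$ and, for each block, use repeated integration by parts combined with the hypothesis $|\partial^N P_k\sigma|\lesssim 2^{-k\delta-Nk}$ to produce spatial decay $|x|^{-N}$ with $N$ chosen large enough to obtain a geometrically summable $L^1$ bound over $k$.

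For part (ii), I would decompose $f=\sum_k P_k f$ and bound each dyadic piece in two ways: from the boundedness of $\sigma$ together with Bernstein, $\|\sigma(i\partial)\nabla P_k f\|_{L^p}\lesssim 2^{k}\|f\|_{L^p}$, and from the decay at frequencies $\sim 2^k$ together with Bernstein, $\|\sigma(i\partial)\nabla P_k f\|_{L^p}\lesssim 2^{-k\delta}\|\nabla f\|_{L^p}$. Using the first estimate for $k<k_0$ and the second for $k\ge k_0$, I obtain
\[
\|\sigma(i\partial)\nabla f\|_{L^p}\lesssim 2^{k_0}\|f\|_{L^p}+2^{-k_0\delta}\|\nabla f\|_{L^p},
\]
and optimizing in $k_0$ produces the interpolated bound $\|f\|_{L^p}^{\delta/(1+\delta)}\|\nabla f\|_{L^p}^{1/(1+\delta)}$. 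For part (iii), I would apply the same dyadic analysis at the Besov level: the low-frequency block $\sum_{k\le 0}P_k$ of the Besov norm is controlled by the $L^p$ estimate of part (ii), while for the high-frequency block the symbol decay combined with Bernstein gives
\[
2^{sk}\|P_k\sigma(i\partial)\nabla f\|_{L^p}\lesssim 2^{k(s-\delta)}\|\nabla P_k f\|_{L^p}\lesssim 2^{k(s-\delta-\lfloor s\rfloor)}\|\nabla^{1+\lfloor s\rfloor}f\|_{L^p},
\]
after shifting $\lfloor s\rfloor$ derivatives onto $f$ via Bernstein. The $\ell^r$-sum over $k\ge 0$ is a convergent geometric series whenever $s-\delta-\lfloor s\rfloor<0$.

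The main technical point is the sharpness of the condition $\delta>s-\lfloor s\rfloor$ in part (iii): this is exactly what forces the exponent $s-\delta-\lfloor s\rfloor$ to be strictly negative so that the high-frequency geometric summation converges in $\ell^r$, and it correctly captures that only the fractional part of $s$ competes with the symbol decay once integer derivatives have been absorbed into $\|\nabla^{1+\lfloor s\rfloor}f\|_{L^p}$. Beyond this balance, the argument is a routine assembly of $L^p$-boundedness of Littlewood-Paley projectors, Bernstein estimates, and the two-sided symbol bound; part (i) reduces to the classical Schwartz-kernel analysis and presents no genuine obstacle.
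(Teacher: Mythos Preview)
Your proposal is correct and follows essentially the same approach as the paper: both use the Littlewood-Paley decomposition together with Bernstein's inequalities, split at a cutoff frequency and optimize to obtain the interpolation inequality in part (ii), and control the Besov norm in part (iii) by bounding the low-frequency block via part (ii) and summing the geometric series $2^{-k(\delta-s+\lfloor s\rfloor)}$ for the high frequencies. The only organizational difference is in part (i): the paper separates the low-frequency block $P_{k\le 0}(\sigma(i\partial)f)$ (convolution with a Schwartz kernel) from the high-frequency dyadic sum $\sum_{k\ge 0}2^{-k\delta}$, whereas you aim to show directly that the full kernel $K=\mathcal{F}^{-1}\sigma$ lies in $L^1$ by summing all dyadic pieces; note that your stated bound $|\partial^N P_k\sigma|\lesssim 2^{-k\delta-Nk}$ fails for $k<0$ (since $\langle\xi\rangle\sim 1$ there), so you should either bunch the low frequencies together as the paper does or argue separately that $\sum_{k\le 0}P_k\sigma$ is smooth and compactly supported in frequency, hence has Schwartz inverse transform.
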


\begin{proof} Using the Littlewood-Paley decomposition, we write
$$
\sigma(i\partial)f = P_{k\le 0}(  \sigma(i\partial) f)  +  \sum_{k \geq 0 } P_k( \sigma(i\partial)  f)
$$
in which $P_{k\le 0}(  \sigma(i\partial) f)$ is a convolution in space of a rapidly decaying function against $f$. The boundedness from $L^p$ to $L^p$ thus follows. On the other hand, using Berstein's inequalities, we bound   
$$ \| \sum_{k \geq 0 } P_k( \sigma(i\partial)  f)\|_{L^p}\le  \sum_{k \geq 0 } \|P_k( \sigma(i\partial)  f)\|_{L^p} \lesssim \sum_{k\ge 0} 2^{-k\delta} \| f\|_{L^p} \lesssim \| f\|_{L^p},$$
in which the summation is finite, since $\delta>0$. As for (2), again using the Littlewood-Paley decomposition, we write
$$
\sigma(i\partial)  \nabla  f = \sum_{k \le A } P_k(  \sigma(i\partial) \nabla  f)  +  \sum_{k \geq A } P_k( \sigma(i\partial)  \nabla  f) =: f_1 + f_2, 
$$
for $A\in \ZZ$ to be determined. 
Using \eqref{Berstein}, we first bound 
$$
\| f_1\|_{L^p} \lesssim \sum_{k <A } \| P_k( \sigma(i\partial) \nabla  f)\|_{L^p} \lesssim  \sum_{k <A } 2^k    \| f\|_{L^p}  \lesssim 2^A \| f\|_{L^p}. 
$$
Similarly, recalling $| \sigma(\xi)| \lesssim\langle \xi \rangle^{-\delta} $, we bound 
$$
\| f_2\|_{L^p} \lesssim \sum_{k \geq A } \| P_k( \sigma(i\partial) \nabla  f)\|_{L^p} 
\lesssim  \sum_{k \geq A } 2^{-\delta k}    \| \nabla f\|_{L^p}  \lesssim 2^{-\delta A}    \|\nabla f\|_{L^p}. 
$$
We can now fix $A$ such that $2^A \| f\|_{L^p}=2^{-\delta A}    \|\nabla f\|_{L^p}$, leading to \eqref{eq:fouriermult}. Finally, by definition and \eqref{Berstein}, we bound 
$$
\begin{aligned}
\|  \sigma(i\partial) \nabla  f \|_{B^{s}_{p,r}} &\lesssim  \|  \sigma(i\partial) \nabla  f \|_{L^p}  
+ \left( \sum_{k \geq 0} 2^{k sr} \| P_k (\sigma(i\partial) \nabla  f)\|_{L^p}^r\right)^{1/r} \\
&\lesssim   \| f \|_{L^p}^{\frac{\delta}{1+\delta}}  \| \nabla f \|_{L^p}^{\frac{1}{1+\delta}}+  \left( \sum_{k \geq 0} 2^{-k(\delta-s+\lfloor s \rfloor) r}\right)^{1/r} \|  \nabla^{1+\lfloor s \rfloor}  f\|_{L^p},
\end{aligned}
$$
yielding \eqref{eq:fouriermultbesov}. The lemma follows. 
\end{proof}

\begin{remark}
Following the proof of Lemma \ref{lem:fouriermult}, we obtain the following standard interpolation inequalities
  \begin{equation}\label{dx-interpolate}
\| \partial^\beta_x f\|_{L^\infty}  \lesssim \| f\|_{L^\infty}^{1-|\beta|/|\alpha|} \| \partial_x^\alpha f\|_{L^\infty}^{|\beta|/|\alpha|} \lesssim \| f\|_{L^\infty}^{1-|\beta|/|\alpha|} \| f\|_{H^{2+\alpha}}^{|\beta|/|\alpha|} 
\end{equation}
for $0 \le |\beta| \le |\alpha|$. 
\end{remark}

\bibliographystyle{abbrv}

\end{document}